\documentclass[a4paper, 12pt, final]{article}

\newcommand{\version}{3.6}%% Version of the article
%% ! ! ! ! !   Remember to update the version number  ! ! ! ! !
%% ! ! ! ! !   each time you change something         ! ! ! ! !

\usepackage[intlimits, nosumlimits]{amsmath}
\usepackage{amssymb,amsthm}
\usepackage{xspace}
%% defines the \xspace command
%\usepackage{comment}
%% defines the comment environment

%\usepackage[notcite, notref]{showkeys}
%% show the labels used in the tex file
\usepackage{comment}
% used to make comments

\usepackage{ifpdf}
\ifpdf % We are running pdftex using hyperref or nohyperref
\usepackage[pdfborder={0 0 0} ]{hyperref}
%% use hyperlinks 
\else
\usepackage{hyperref}
\fi

\author{A. Fornasiero \&\ E. Vasquez Rifo}

\title{Hausdorff measure on o-minimal structures\\
\normalsize{Version \version}}

\ifpdf 
  \hypersetup{%
    pdfauthor={Antongiulio Fornasiero \& Elisa Vasquez Rifo},
    pdfsubject=%
     {Hausdorff measure of definable subsets of an o-minimal structure},
    pdfkeywords=%
     {Hausdorff measure, o-minimality, Cauchy-Crofton formula, Whitney arc property},
  }
\fi

\makeatletter
\renewcommand\@makefnmark{{\normalfont({\scriptsize\@textsuperscript\@thefnmark})}}
%% change style of footnotes: (1) instead of 1
\makeatother

\newcommand*{\Pa}[1]{\bigl( #1 \bigr)}
%% (X)
\newcommand*{\pair}[1]{\langle #1 \rangle}
%% <x, y> pair (or tuple)
\newcommand*{\set}[1]{\{#1\}}
%% {X}

%% {X}
\newcommand*{\abs}[1]{\lvert#1\rvert}
%% |X|
\newcommand*{\norm}[1]{\lVert#1\rVert}
%% ||X||

%% |X| cardinality of X
\newcommand{\Nat}{\mathbb{N}}
%% N natural numbers
\newcommand{\Real}{\mathbb{R}}
%% R real numbers
\newcommand{\Rb}{\bar \Real}
%% R_ extended real line
\newcommand{\Q}{\mathbb{Q}}
%% Rational numbers
\newcommand{\K}{K}
%% K structure
\newcommand{\Kb}{\mathring \K}
%% bounded elements of K
\newcommand{\D}{D}
%% D = [0,1] closed interval in K

%% D = [0,1] closed interval in R
\newcommand{\Cone}{\mathcal C^1}
%% C-1
\newcommand{\la}{\langle}
\newcommand{\ra}{\rangle}
%% inner product
\DeclareMathOperator{\End}{End}
\DeclareMathOperator{\DIFF}{D}
\DeclareMathOperator{\dom}{dom}
\newcommand{\Diff}{\mathord{\DIFF}}
%\newcommand{\De}{\Diff_e}
%% D_e Jacobian matrix
\newcommand{\Jac}{J}
\newcommand{\Je}{J_e}
%% J_e Jacobian
\newcommand{\de}{\:\mathrm d}
%% d differential
\newcommand{\Haus}{\mathcal H}
\newcommand{\He}{\Haus^{e}}
\newcommand{\HeK}{\Haus^{e}}
\newcommand{\HnK}{\Haus^{n}}
%% H Hausdorff measure
\newcommand{\Leb}{\mathcal L}
\newcommand{\LEK}{\mathcal L^E}
\newcommand{\LnK}{\Leb^n}
\newcommand{\LeK}{\Leb^e}
%% L Lebesgue measure
\newcommand{\Agr}{\mathcal{AG}}
%% Gr affine Grassmannian
\DeclareMathOperator{\st}{st}
%% st standard part

%% Sf standard map associated to f
\newcommand*{\cl}[1]{\overline{#1}}
%% closure
\DeclareMathOperator{\rank}{rank}
%% rank
\newcommand{\comp}{\circ}
%% composition of functions
\DeclareMathOperator{\Vol}{Vol}
%% volume
\DeclareMathOperator{\length}{length}
\newcommand{\x}{\bar x}
%% x vector
\newcommand{\bcup}{\textstyle\bigcup}
%% union of a family
\newcommand{\BAD}{\mathsf{BAD}}
%% BAD set
\DeclareMathOperator{\Imag}{Im}
%% image of a function

%% epsilon

%%% Words
\newcommand{\bilip}{bi-Lipschitz\xspace}
%% Bi-Lipschitz map

%\newcommand{\erect}{$e$-rectifiable\xspace}
\newcommand{\berect}{basic $e$-rectifiable\xspace}
\newcommand{\zsalg}{$\emptyset$-semialgebraic\xspace}
\newcommand{\cf}{cf\mbox{.}\xspace}
\newcommand{\Wlog}{W.l.o.g\mbox{.}\xspace}
\newcommand{\wloG}{w.l.o.g\mbox{.}\xspace}

\newcommand{\rom}{\textup}
%% roman font (for parentheses)
\def\hyph{\nobreakdash-\hspace{0pt}\relax}
%% non-braking dash

\newtheorem{lem}{Lemma}[section]
\newtheorem{thm}[lem]{Theorem}
\newtheorem{cor}[lem]{Corollary}

\newtheorem{open problem}[lem]{Open problem}

\theoremstyle{remark}

\newtheorem{claim}{Claim}
\newtheorem*{claim*}{Claim}

\theoremstyle{definition}
\newtheorem{dfn}[lem]{Definition}

\newtheorem{rem}[lem]{Remark}
\newtheorem{final remark}[lem]{Final remark}
\newtheorem{example}[lem]{Example}

\newenvironment{sentence}[1][]{%
  \begin{list}{}{%
    \setlength\topsep{0.5ex}%
    \setlength\leftmargin{\parindent}%
  }%
  \item[#1]
 }
 {\end{list}}

%%% End of preamble

\begin{document}
\maketitle

\begin{abstract}
We introduce the Hausdorff measure for definable sets in an o-minimal
structure, and prove the Cauchy-Crofton and co-area formulae for the
o-minimal Hausdorff measure.
We also prove that every definable set can be partitioned into ``basic
rectifiable sets'', and that  the Whitney arc property holds for basic rectifiable sets.
\end{abstract}

\textit{Keywords}:
O-minimality, Hausdorff measure, Whitney arc property, Cauchy-Crofton, coarea.

\textit{MSC 2010}:
Primary:  03C64;    	%Model theory of ordered structures; o-minimality
Secondary: 28A75.    	%Length, area, volume, other geometric measure theory

\section{Introduction}
Let $\K$ be an o-minimal structure expanding a field.
We introduce, for every $e \in \Nat$, the 
$e$-dimensional Hausdorff measure for definable sets, which is the
generalization of the usual Hausdorff measure for real sets \cite{morgan}.
We also show that every definable set can be partitioned into ``basic
$e$-rectifiable sets'' (\S \ref{sec:partition}). 
Moreover, we generalize some well known result from geometric measure theory,
such as the Cauchy-Crofton formula (which computes the Hausdorff measure of a
set as the average number of points of intersection with hyperplanes
of complementary dimension) and the co-area formula (a generalization of
Fubini's theorem), to the o-minimal context.

The measure defined in \cite{bo} is the starting point for our construction
of the Hausdorff measure.  A theorem of \cite{bp} allows us to prove that
integration using the Berarducci-Otero measure
%in our setting 
satisfies properties analogous to the ones for
integration over the reals (for example, the change of variable formula).  
If $\K$ is sufficiently saturated, the Berarducci-Otero measure of a bounded
definable set $X$ is $\Leb_\Real(\st(X))$, where $\Leb_\Real$ is the Lebesgue
measure and $\st$ is the standard-part map.
However, the naive definition of Hausdorff measure given by
\begin{equation}
\label{eq:naive}
\Haus^e(X) := \Haus^e_\Real(\st(X))
\end{equation}
does not work (because the resulting ``measure'' is not additive: see
Example~\ref{ex:double}). 
The correct definition for the $e$-dimensional Hausdorff measure  is 
%defined 
defining it first for basic $e$-rectifiable sets via \eqref{eq:naive},
and then extending it to definable sets by using a partition into basic
$e$-rectifiable pieces. Such a partition is obtained by using partitions into
$M_n$-cells (\cite{kurdyka}, \cite{pawlucki}, \cite{evr}), a consequence of
which is the Whitney arc property for basic $e$-rectifiable sets~(\S\ref{sec:whitney}).

%We prove the Cauchy-Crofton formula,
%Further properties of the Hausdorff measure and the co-area formula are also obtained.

%\begin{example}Fix $\varepsilon > 0$ infinitesimal. Let  $f: [0,1] \to [0,1]$ (where $[0,1] \subset \K$) be\[f(x) =\begin{cases}x / \varepsilon & \text{if } 0 \leq x \leq \varepsilon,\\1 & \text{if } \varepsilon \leq x \leq 1.\end{cases}\]Then, $f$ is continuous and piece-wise~$\mathcal C^1$,$\st(f')$ is $\Leb$-integrable,$f(1) - f(0) = 1$, but $\int_{[0,1]} \st(f') = 0$.\end{example}

\section{Lebesgue measure on o-minimal structures}\label{sec:Lebesgue}
The definitions of measure theory are taken from~\cite{halmos}.

Let $\Rb := \Real \cup \set{\pm \infty}$ be the extended real line.
Let $\K$ be a $\aleph_1$-saturated o-minimal structure, expanding a field.
Let $\Kb$ be the set of finite elements of~$\K$.
Let $\st: \K^n \to \Rb^n$ be the function mapping  $\x$ to the $n$-tuple of 
standard  parts of the components of~$\x$.

For every $n \in \Nat$, let $\Leb^n_\Real$ be the $n$-dimensional Lebesgue
measure (on~$\Real^n$). 
If $n$ is clear from context we drop the  superscript. 
Let $\LnK_1$ be the o-minimal measure on $\Kb^n$ defined in~\cite{bo}. 
More precisely, $\LnK_1$ is a measure on the $\sigma$-ring $R_n$ generated by
the definable subsets of $\Kb^n$; thus, $(\Kb^n, R_n, \LnK_1)$ is a measure
space.
Moreover, since $\Kb^n \in R_n$, $R_n$ is actually a $\sigma$-algebra.

Notice that $\LnK_1$ can be extended in a natural way to a measure
$\LnK_2$ on the $\sigma$-ring $\mathcal B_n$ 
generated by the definable subsets of $\K^n$ of finite diameter.  
Finally, we denote by $\LnK$ the completion of~$\LnK_2$, 
and if $n$ is clear from context we drop the superscript.
Notice that the $\sigma$-ring $\mathcal B_n$
%generated by the definable subsets of $\K^n$ of finite diameter 
is not a $\sigma$-algebra.

\begin{rem}[{\cite[Thm.~4.3]{bo}}]\label{rem:BO}
If $C \subset \Kb^n$ is definable,
then $\Leb^n(C)$ is the Lebesgue measure of $\st(C)$ .
\end{rem}

\begin{dfn}
For $A \subseteq \K^n$ and  $f: \K^n \to \K^m$ we  define
$\st(f): A \to \Rb^m$ by $\st(f)(x) = \st(f(x))$.
\end{dfn}

\begin{rem}
If $A\subseteq\Kb^n$ and $f : A \to \K$ are definable,
then $\st(f)$ is an $\Leb^n$\hyph measurable function.
\end{rem}

\begin{dfn}
Let $A \subseteq \Kb^n$ and $f: A \to \K$ be definable.
If $\st(f)$ is $\Leb^n$-integrable  we will denote its integral by
\[
\int_A f \de \Leb^n;\quad \int_A f(x) \de x;\quad \int_A f(x) \de
\Leb^n(x)\quad\text{or }\int_A f.
\]
\end{dfn} 

\begin{rem}
If $A \subseteq \Kb^n$ and $f: A \to \Kb$ are definable, then
$\st(f)$ is $\Leb$\hyph integrable.
\end{rem}

Let $\Real_K$ be the structure on $\Real$ generated by the sets of the form 
$\st(U)$, where $U$ varies among the definable subsets of~$\K^n$.
By~\cite{bp}, $\Real_K$ is o-minimal.
\begin{rem}\label{rem:HPP}
Let $U \subseteq \Kb^n$ be definable.
Then,
$\dim(\st(U)) \leq \dim(U)$.
\end{rem}
\begin{proof}
Let $\dim(U) = d$.
After a cell decomposition, we can assume that $U$ is the graph of a definable
continuous function $f: V \to \Kb^{n - d}$, with $V \subset \Kb^d$ open cell.
We can then conclude by applying the method in \cite[Lemma 10.3]{HPP}.
%Induction on~$n$.
%Let $d := \dim(U)$.
%If $d = n$, the result is clear.
%If $d = n - 1$, apply \cite[Lemma 10.3]{HPP}.
%If $d \leq n- 2$, then, after taking a cell decomposition,
%\wloG we can assume that $\dim(\pi(U)) = d$, where $d :=$
\end{proof}

\begin{dfn}
A function  $f$ is Lipschitz if there is $C\in\Kb$ such that, 
for all $x,y\in\dom(f)$, we have $|f(x)-f(y)| < C|x-y|$
(notice the condition on $C$ being finite).
An invertible function $f$ is bi-Lipschitz if  both $f$ and $f^{-1}$ are
Lipschitz. 
\end{dfn}

\begin{rem}\label{rem:volume}
Let $U \subset \Kb^n$ and $f: U \to \Kb$ be definable, with $f \geq 0$.
Then,
\[
\int_U f \de \Leb^n = \Leb^{n+1}\Pa{\set{\pair{\x, y} \in U \times \K: 0 \leq
y \leq f(\x)}}.
\]
\end{rem}

\begin{lem}[Change of variables]\label{lem:change-variables}
Let $U, V \subseteq \Kb^n$ be open and definable, 
and let $A \subseteq U$ be definable. 
Let $f: U \to V$  be definable and \bilip and $g: V \to \Kb$  be definable, then
\[
\int_{f(A)} g = \int_A \abs{\det \Diff f}\; g \comp f .
\]
\end{lem}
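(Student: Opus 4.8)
The plan is to reduce the statement to the classical change of variables formula on $\Real^n$ via the standard part map, using Remark~\ref{rem:BO} to pass between the o-minimal and the real Lebesgue measures. First I would reduce to the case $g\equiv 1$, that is, to proving that $\Leb^n(f(B)) = \int_B \abs{\det\Diff f}$ for every definable $B \subseteq U$. Granting this, write $w := \abs{\det\Diff f}$ and define the measure $\nu(E) := \int_{f^{-1}(E)} w \de\Leb^n$ on the definable subsets $E \subseteq V$; since $f$ is a bijection $U \to V$, the case $g\equiv 1$ applied to $B = f^{-1}(E)$ gives $\nu(E) = \Leb^n\Pa{f(f^{-1}(E))} = \Leb^n(E)$, so $\nu = \Leb^n$ on $V$. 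As $\nu$ is by construction the push-forward of $w\de\Leb^n$ under $f$, the abstract identity $\int_V h \de\nu = \int_U (h\comp f)\, w \de\Leb^n$ applied to $h = g\cdot\mathbf 1_{f(A)}$ yields $\int_{f(A)} g = \int_A \abs{\det\Diff f}\; g\comp f$, which is the assertion. This last step is routine measure theory, so I omit it. Note that $w$ is defined $\Leb^n$-almost everywhere (off the lower-dimensional set where $f$ fails to be $C^1$, which is null by Remark~\ref{rem:HPP}) and is bounded by $C^n \in \Kb$, where $C$ is a Lipschitz constant for $f$.

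Next I would descend $f$ to the reals. Since $f$ is Lipschitz with a \emph{finite} constant $C$, it maps infinitely close points to infinitely close points: if $\st(x-x') = 0$ then $\abs{f(x)-f(x')} < C\abs{x-x'} \approx 0$. Hence $\bar f := \st(f)$ is constant on the fibres of $\st$ and descends to a well-defined map $\bar f\colon \st(U) \to \st(V)$, $\bar f(\st(x)) = \st(f(x))$. As $f^{-1}$ is Lipschitz too, $\bar f$ is a bi-Lipschitz bijection (with constant $\st(C)$), definable in the o-minimal structure $\Real_K$, and $\st(f(B)) = \bar f(\st(B))$ for every definable $B$. Combining this with Remark~\ref{rem:BO} and the classical change of variables (area) formula for the injective bi-Lipschitz map $\bar f$ gives
\[
\Leb^n(f(B)) = \Leb^n_\Real\Pa{\bar f(\st(B))} = \int_{\st(B)} \abs{\det\Diff\bar f} \de\Leb^n_\Real .
\]
The reduced goal $\Leb^n(f(B)) = \int_B \abs{\det\Diff f}$ thus becomes the equality $\int_{\st(B)}\abs{\det\Diff\bar f}\de\Leb^n_\Real = \int_B \abs{\det\Diff f}\de\Leb^n$ for all definable $B$. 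Since $\st$ transforms $\Leb^n$ into $\Leb^n_\Real$ (Remark~\ref{rem:BO}), this in turn follows from the \emph{Jacobian transfer}
\[
\st\Pa{\abs{\det\Diff f(x)}} = \abs{\det\Diff\bar f(\st(x))} \qquad\text{for } \Leb^n\text{-a.e. } x \in U ,
\]
which pushes forward to the desired integral identity.

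The main obstacle is precisely this Jacobian transfer, which I expect to isolate as a separate lemma. The difficulty is that $\Diff f$, though bounded by $C$, may oscillate at infinitesimal scale, so $\st(\abs{\det\Diff f})$ need not descend through $\st$; the displayed identity asserts only that on almost every fibre it is $\st$-almost-constant with the correct value. To prove it I would cell-decompose so that $f$ is $C^1$ with continuous derivative on $B$, and then compare local volume distortions. The affine case transfers cleanly: for a finite matrix $M$ and definable bounded $C \subseteq \Kb^n$ one has $\st(MC) = \st(M)\,\st(C)$, whence $\Leb^n(MC) = \Leb^n_\Real\Pa{\st(M)\,\st(C)} = \st(\abs{\det M})\,\Leb^n(C)$ by Remark~\ref{rem:BO} and real linear algebra. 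The general case should follow by approximating $f$ on an infinitesimal ball $B_\K(x_0,\rho)$ by its differential $x \mapsto f(x_0) + \Diff f(x_0)(x-x_0)$, so that $\Leb^n\Pa{f(B_\K(x_0,\rho))} = \abs{\det\Diff f(x_0)}\,\Leb^n(B_\K(x_0,\rho))\,(1+o(1))$, and matching this with the real density $\lim_{r\to 0^+}\Leb^n_\Real\Pa{\bar f(B(\bar x,r))}/\Leb^n_\Real(B(\bar x,r)) = \abs{\det\Diff\bar f(\bar x)}$, valid a.e.\ by differentiability of the o-minimal map $\bar f$. Reconciling the infinitesimal scale governing $\Diff f$ with the standard scale $r$ and controlling the error terms in the saturated setting is the delicate point; once it is settled, the chain of equalities above closes the proof.
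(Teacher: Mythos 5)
Your overall strategy coincides with the paper's: both reduce the statement to the classical change of variables on $\Real^n$ through the standard-part map, both handle general $g$ by first treating the constant/indicator case and then passing to integrable functions, and both identify the same crux, namely that $\st(\abs{\det\Diff f})$ descends through $\st$ to $\abs{\det\Diff \cl f}$ almost everywhere. The paper isolates exactly this as Lemma~\ref{lem:int-Stan}\rom{(iii)} (the identity $\Diff(\cl f)(\st x)=\st(\Diff f(x))$ off a negligible set), after which the proof of the change of variables lemma is precisely the two-step bookkeeping you describe. So, up to the Jacobian transfer, your proposal is the paper's proof.

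The gap is that you do not prove the Jacobian transfer, and the route you sketch for it would not close as written. The density/volume-distortion argument runs into the scale mismatch you yourself flag, and the problem is not merely delicacy but circularity: if the comparison ball $B_\K(x_0,\rho)$ has infinitesimal radius, then $\Leb^n\Pa{f(B_\K(x_0,\rho))}=\Leb^n_\Real\Pa{\st(f(B_\K(x_0,\rho)))}=0$ and the distortion identity is vacuous; if $\rho$ is standard, then replacing $f$ on that ball by its differential at $x_0$ with error $o(\rho)$ requires $\st(\Diff f(x))$ to vary continuously with $\st(x)$ at standard scale, which is essentially the statement being proved (the modulus of differentiability of $f$ at $x_0$ may be infinitesimal). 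The paper bridges the two scales by a definability argument rather than a density argument: the discrepancy set $V=\set{x\in\Real^n:\Diff(\cl f)(x)\neq \cl{\Diff f}(x)}$ is $\Real_K$-definable, so by o-minimality of $\Real_K$ (from \cite{bp}) it is either negligible or contains a \emph{standard-sized} open ball; on such a ball, after subtracting a linear map, some partial derivative of $f$ stays below $\eps$ while the corresponding partial of $\cl f$ stays above $2\eps$, and integrating along a standard segment contradicts $\st(f(x))=\cl f(\st x)$. Some such appeal to the o-minimality of the induced real structure is needed; without it your argument is incomplete at its central step.
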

Before proving the above lemma, we need some preliminary definitions and
results.

\begin{lem}\label{lem:int-Stan}
Let $U \subset \Kb^n$ be open and let $f: U \to \Kb$ be definable. 
Then there is a  $\Real_K$-definable function $\cl f:C\to\Real$, where $C\subset\st(U)$ is an open set, such that 
\begin{enumerate}
\item[i)] $E:= \Pa{\st(U)\setminus C} \cup \Pa{C \cap \st(\K^n \setminus U)}$ 
is $\Leb^n_\Real$-negligible
\rom(and, therefore, $\st^{-1}(E)$ is $\Leb^n$-negligible\rom).
\item[ii)] $f$ and $\cl f$ are $\Cone$ on  $U\setminus\st^{-1}(E)$ and $C$, respectively. 
\item[iii)]For every $x\in U$ with $\st(x)\in C$ we have $\st(f(x))=\cl f(\st(x))$. Moreover, $\Diff f$ is finite and $\Diff(\cl f)(\st x) = \st(\Diff f(x))$.
\item[iv)] 
\[
\int_U f = \int_{C} \cl f.
\]
\end{enumerate}
\end{lem}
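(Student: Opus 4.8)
The plan is to obtain $\cl f$ as the single-valued part of the standard part of the graph of $f$, and to read off all four clauses from the geometry of that standard part together with Remarks~\ref{rem:BO} and~\ref{rem:HPP}. First I would reduce to the case that $f$ is $\Cone$ on $U$: by $\Cone$-cell decomposition of $\K$, partition $U$ into cells on which $f$ is $\Cone$; the non-open cells form a definable set of dimension $<n$, whose standard part is $\Leb^n_\Real$-negligible by Remark~\ref{rem:HPP}, so it may be absorbed into $E$. Write $U^\circ$ for the union of the open cells, set $G:=\set{\pair{x,f(x)}:x\in U^\circ}\subseteq\Kb^{n+1}$ and $\Gamma:=\st(G)\subseteq\Real^{n+1}$; then $\Gamma$ is $\Real_K$-definable and, since $\dim G=n$, Remark~\ref{rem:HPP} gives $\dim\Gamma\le n$. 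For $a\in\Real^n$ write $\Gamma_a:=\set{y:\pair{a,y}\in\Gamma}$ and let $M(a):=\st^{-1}(a)$ be the monad of~$a$.

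The key observation is that $\Gamma$ is single-valued off a negligible set. If $a\in\st(U)\cap\st(\K^n\setminus U)$, then $M(a)$ meets both $U$ and its complement, so, $M(a)$ being convex, it meets $\operatorname{fr}(U)$; hence such $a$ lie in $\st(\operatorname{fr}(U))$, a set of dimension $<n$ by Remark~\ref{rem:HPP}, hence negligible. Off this set $M(a)\subseteq U$. Now if $\Gamma_a$ contained two distinct points $y_1\ne y_2$, there would be $x_1,x_2\in M(a)$ with $\st f(x_i)=y_i$; since $f$ is continuous on the segment $[x_1,x_2]\subseteq M(a)\subseteq U$, the intermediate value theorem forces $\Gamma_a\supseteq[y_1,y_2]$, so $\Gamma_a$ would be infinite. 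But the set of $a$ over which $\Gamma$ has a one-dimensional fibre has, by o-minimality, dimension $\le(\dim\Gamma)-1\le n-1$, hence is negligible. Therefore on a co-negligible set $C'\subseteq\st(U)$ the fibre $\Gamma_a$ is a single point; let $g(a)$ denote its value, an $\Real_K$-definable function. A $\Cone$-cell decomposition of $g$, followed by removing the negligible set $\st(U\setminus U^\circ)$, yields an open set $C$ on which $g$ is $\Cone$; put $\cl f:=g|_C$. By construction $E=\st(U)\setminus C$ is negligible and $C\cap\st(\K^n\setminus U)=\emptyset$, which is clause~(i) (and $\st^{-1}(E)$ is $\Leb^n$-negligible by Remark~\ref{rem:BO}); clause~(ii) holds since $x\in U$ with $\st x\in C$ forces $x\in U^\circ$; and for such $x$ we have $\pair{\st x,\st f(x)}=\st\pair{x,f(x)}\in\Gamma$, so $\st f(x)\in\Gamma_{\st x}=\set{\cl f(\st x)}$, giving the value part of~(iii).

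It remains to prove finiteness of $\Diff f$, the derivative identity in~(iii), and clause~(iv). For finiteness, note that on each line parallel to a coordinate axis the boundedness of $f$ together with the fundamental theorem of calculus forces $\abs{\partial_i f}$ to be infinite only on finitely many (by o-minimality) intervals of infinitesimal length; by Fubini the standard part of $\set{x\in U^\circ:\Diff f(x)\text{ is infinite}}$ is $\Leb^n_\Real$-negligible, so it too may be absorbed into $E$, leaving $\Diff f$ finite on $U\setminus\st^{-1}(E)$. For the identity $\Diff(\cl f)(\st x)=\st(\Diff f(x))$ I would differentiate the relation $\cl f\comp\st=\st f$: for standard $h$ and $x\in M(a)$ with $a,a+he_i\in C$,
\[
\cl f(a+h e_i)-\cl f(a)=\st\Pa{\int_0^h \partial_i f(x+s e_i)\de s},
\]
and then commute $\st$ with the internal integral to rewrite the right-hand side as $\int_0^h \st(\partial_i f(x+s e_i))\de s$; the real fundamental theorem of calculus then yields $\partial_i\cl f(a)=\st(\partial_i f(x))$.

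I expect this commutation of $\st$ with the internal integral to be the main obstacle: it is a dominated-convergence phenomenon for the Berarducci--Otero integral and requires a finite bound on $\abs{\partial_i f}$ that is uniform over the relevant region (not merely the pointwise finiteness just established), together with continuity of the limiting gradient, so some care near the bad set is needed. Finally, clause~(iv) follows formally: by definition $\int_U f=\int_U\st f\de\Leb^n$, and since $\st f=\cl f\comp\st$ off the negligible set $\st^{-1}(E)$, it suffices to know that $\st$ pushes $\Leb^n$ on $\Kb^n$ forward to $\Leb^n_\Real$. This is precisely Remark~\ref{rem:BO} on indicators of definable sets, and extends to the $\Real_K$-definable integrand $\cl f$ by linearity and monotone convergence, giving $\int_U\st f\de\Leb^n=\int_C\cl f\de\Leb^n_\Real=\int_C\cl f$.
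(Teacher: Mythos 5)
Your construction of $\cl f$ as the single-valued part of $\st(\Gamma(f))$, the argument that multivalued fibres are confined to a negligible set (continuity of $f$ on monads plus the fibre-dimension inequality in the o-minimal structure $\Real_K$), and the derivations of (i), (ii), the value part of (iii), and (iv) all match the paper's proof in substance; your frontier argument for (i) is a legitimate substitute for the paper's citation of \cite[Theorem~10.4]{HPP}. The genuine problem is the derivative identity $\Diff(\cl f)(\st x)=\st(\Diff f(x))$, where you take a different route from the paper and leave its central step unproved. Interchanging $\st$ with the internal integral $\int_0^h \partial_i f(x+se_i)\de s$ is false for a definable integrand that is merely pointwise finite off $\st^{-1}(E)$: a definable spike of height $1/\eps$ supported on an interval of width $\eps$ contributes $1$ to the internal integral while its support is $\Leb^1$-null, so $\st\int\neq\int\st$ in general. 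You flag this yourself as ``the main obstacle'' requiring ``a finite bound\dots uniform over the relevant region'' and ``care near the bad set,'' but you do not supply that care, so the second half of (iii) is not proved.

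The gap is closable, and the ingredients are exactly those the paper assembles before running a \emph{different} argument. First, localize: for $a\in C$ and $h$ standard and small, the standard segment from $a$ to $a+he_i$ lies in $C$, hence misses $E\supseteq E_i$, so $\partial_i f$ is finite at every point of the corresponding internal segment; by $\omega$-saturation a definable function that is pointwise finite on a definable set is uniformly bounded by a finite constant (the paper records this right after defining $M$-functions), and with that uniform bound the interchange does follow from Remarks~\ref{rem:volume} and~\ref{rem:BO}. Second, to pass from $\frac1h\int_0^h\st(\partial_i f(x+se_i))\de s$ to $\st(\partial_i f(x))$ you need $s\mapsto\st(\partial_i f(x+se_i))$ to agree a.e.\ with a function continuous at $0$, i.e.\ you must apply the first parts of the lemma to $\Diff f$ itself to get $\cl{\Diff f}$ with $\st(\Diff f(x))=\cl{\Diff f}(\st x)$ off a further negligible set. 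The paper does exactly this and then, instead of your FTC computation, argues by contradiction: the set $V$ where $\Diff(\cl f)\neq\cl{\Diff f}$ is $\Real_K$-definable, so if non-negligible it contains a ball, and after normalizing one compares the growth of $f$ and $\cl f$ along the $x_i$-axis to contradict $\st f=\cl f\comp\st$. A smaller instance of the same issue sits in your finiteness step: ``by Fubini the standard part of $\set{x:\Diff f(x)\text{ infinite}}$ is negligible'' presupposes measurability of that standard part; the paper obtains this from the $\Real_K$-definability of the sets $E_i$ (via \cite{bp}) together with the ball/one-variable dichotomy, and you should do likewise rather than apply Fubini to a set of unknown measurability.
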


\begin{proof}
By cell decomposition, we may assume that $f$ is a function of class $\Cone$, 
and that $U$ is an open cell. Since $\dim(\Gamma(f))=n$, we have,
%$\Leb^{n+1}(\Gamma(f))=0$.  
%Therefore $\Leb_\Real^{n+1}(\st(\Gamma(f)))=0$, and consequently
by Remark~\ref{rem:HPP}, $\dim(\st(\Gamma(f))\leq n$.  
By cell decomposition, there is an
$\Real_K$-definable, closed, negligible set $E\subset\st(U)$, and definable
functions  $g_k:\st(U)\setminus E\to \Real$ of class $\Cone$ for
$k=1,\dots, r$ such that $\st(\Gamma(f))\cap((\st(U)\setminus E)\times\Real)$
is the union of  the graphs of the functions $g_i$. 
We claim that $r=1$:\\
In fact, if $g_1,\ g_2$ are two different such functions, and say $g_1<g_2$,
then for some $x\in\st(U)$ we have 
$\pair{x,g_1(x)},\, \pair{x,g_2(x)} \in \st(\Gamma(f))$.
Since $f$ is continuous,% the intermediate value theorem shows that the interval 
$\set{\pair{x,y}:y\in(g_1(x),g_2(x))}\subset \st(\Gamma(f))$. 
On the other hand, $\set{\pair{x,y}: \pair{x,y}\in\st(\Gamma(f))}$ 
is the finite set $\set{\pair{x,g_1(x)},\dots,\pair{x,g_r(x)}}$, absurd.

By \cite[Theorem~10.4]{HPP}, after enlarging $E$ by a negligible set, we
obtain i).

Let $\cl f:=g_1$.  
ii) holds, and for every $x\in U$ with $\st(x)\in C$ we
have $\st(f(x))=\cl f(\st(x))$. 
The equality of the integrals in iv) follows from Remark~\ref{rem:volume}.
To obtain the second part of iii) we will enlarge $E$ by a negligible set. 
For $i = 1,\dots, n$ let 
\[
E_i := \st\bigl(\bigl\{x\in U:\frac{\partial f}{\partial x_i}(x)\notin\Kb\bigr\}\bigr).
\]
By~\cite{bp}, $E_i$ is  $\Real_K$-definable. If $\dim(E_i)=n$, then $E_i$
contains an open ball. This contradicts Lemma~2.5 of~\cite{bo} by which every
definable, one variable function into $\Kb$ has finite derivative except on
$\st^{-1}(A)$, for a finite set~$A$. 
It follows that each set $E_i$ is negligible and therefore, after enlarging $E$, we may assume that $\Diff(f)$ is finite on $U\setminus\st^{-1}(E)$. 

It remains to prove $\Diff(\cl f)(\st x) = \st(\Diff f(x))$. As before, we
will enlarge $E$ by a negligible set.  Let  $V:=\{x\in\Real^n: \Diff(\cl f)(x)
\neq \cl{\Diff f}(x)\}$.    The set $V$ is  $\Real_K$-definable. If $V$ is
non-negligible, then it contains an open ball and therefore w.l.o.g. we may
assume that $V$ is an open ball centered at $0$. We may also assume
$f(0)=0$. After substracting from $f$  a linear function, we can assume that
$\frac{\partial f}{\partial x_i}(0)=0$ and $\frac{\partial\cl f}{\partial
  x_i}(0)=3\epsilon>0$ for some index $i=1,\dots,n$.  Therefore, on a smaller
neighborhood of $0$,  we have $\frac{\partial f}{\partial x_i}<\epsilon$ and
$\frac{\partial\cl f}{\partial x_i}>2\epsilon$. Thus, for $x$ along the $x_i$
axis, $|f(x)|<|x|\epsilon$ and $\cl f(x)\geq2|x|\epsilon$ contradicting the
first part of iii), namely, $\st(f(x))=\cl f(x)$. We conclude that $V$ is
negligible. Let $E'$ be a negligible set such that away from $\st^{-1}(E')$
the equality $\st(\Diff f(x))=\cl{\Diff f}(\st x)$ holds. Then away from
$\st^{-1}(V \cup E')$ we have $\st(\Diff f(x))=\cl{\Diff f}(\st(x))=\Diff\cl f(\st(x))$ as wanted. By cell decomposition, $E$ can be further enlarged so that $C$ is open.
\end{proof}

\begin{rem}
If $f^{-1}(A)$ is negligible whenever $A$ is, then, outside a negligible closed set,
$\cl{(f \comp g)} = \cl f \comp \cl g$.
\end{rem}

\begin{proof}[Proof of Lemma~\ref{lem:change-variables}]
The fact that $f$ is \bilip implies that %outside a negligible set, 
$\cl f$ is injective (since it is also \bilip).
\begin{claim}\label{cl:change-variables-1}
Let $C \subset \st(V)$ be Lebesgue measurable.
Then,
\[
\Leb^n(C) = \int_{(\st f)^{-1}(C)} \st(\abs{\det \Diff f}).
\] 
\end{claim}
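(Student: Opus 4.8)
The plan is to push the whole identity down to $\Real^n$ by means of Lemma~\ref{lem:int-Stan} and then to quote the classical change of variables (area) formula for the real, bi-Lipschitz map $\cl f$. Applying Lemma~\ref{lem:int-Stan} to each component of $f$ (and discarding the union of the finitely many exceptional negligible sets) produces an $\Real_K$-definable map $\cl f : C_0 \to \Real^n$, defined on a co-negligible open $C_0 \subseteq \st(U)$, which is $\Cone$ there, satisfies $\st(f(x)) = \cl f(\st x)$ and $\st(\Diff f(x)) = \Diff \cl f(\st x)$ for every $x$ with $\st x \in C_0$, and — because $f$ is bi-Lipschitz, as already noted — is itself bi-Lipschitz, hence injective.

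First I would translate the domain of integration. For $x \in U$ with $\st x \in C_0$ we have $\st f(x) = \cl f(\st x)$, so $x \in (\st f)^{-1}(C)$ iff $\st x \in (\cl f)^{-1}(C)$; thus, up to an $\Leb^n$-negligible set, the domain $(\st f)^{-1}(C)$ coincides with $\st^{-1}\Pa{(\cl f)^{-1}(C)}$. Next I would translate the integrand: since the determinant is a polynomial in the entries and hence commutes with $\st$, part iii) of Lemma~\ref{lem:int-Stan} gives $\st\Pa{\abs{\det \Diff f}(x)} = \abs{\det \Diff \cl f}(\st x)$ off a negligible set. Combining these two observations with the identity ``o-minimal integral $=$ real integral of the standard part'' furnished by Lemma~\ref{lem:int-Stan}(iv) (in the form $\int_A \st(h) \de \Leb^n = \int_{\st(A)} \cl h \de \Leb^n_\Real$), I obtain
\[
\int_{(\st f)^{-1}(C)} \st\Pa{\abs{\det \Diff f}} = \int_{(\cl f)^{-1}(C)} \abs{\det \Diff \cl f} \de \Leb^n_\Real .
\]

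It then remains to evaluate the right-hand side. Here I would invoke the classical change of variables (area) formula for the injective, bi-Lipschitz, a.e.\ $\Cone$ map $\cl f$, applied to the measurable set $B := (\cl f)^{-1}(C)$, giving
\[
\int_{B} \abs{\det \Diff \cl f} \de \Leb^n_\Real = \Leb^n_\Real\Pa{\cl f(B)} = \Leb^n_\Real\Pa{C \cap \Imag(\cl f)} .
\]
Finally I would show that $\Imag(\cl f)$ is co-negligible in $\st(V)$, so that $\Leb^n_\Real(C \cap \Imag \cl f) = \Leb^n_\Real(C) = \Leb^n(C)$ (the two measures agreeing on the standard set $C$). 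This follows by applying the same construction to the bi-Lipschitz inverse $f^{-1}$ and using the composition rule $\cl f \comp \cl{f^{-1}} = \cl{\mathrm{id}}$ valid off a negligible set (legitimate since bi-Lipschitz maps preserve negligibility), whence $\cl f$ is onto a co-negligible subset of $\st(V)$.

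The main obstacle I anticipate is the bookkeeping of the negligible exceptional sets: I must ensure that the single negligible set ultimately discarded is large enough to validate simultaneously the domain identification, the integrand identification, the passage to the real integral, and the composition formula. A secondary point is that $C$ is merely Lebesgue measurable rather than definable; this is harmless for the real area formula (which holds for bi-Lipschitz maps on arbitrary measurable sets), but it means the auxiliary sets $(\st f)^{-1}(C)$ and $B$ are only measurable, so each appeal to Lemma~\ref{lem:int-Stan} and to Remark~\ref{rem:BO} must be made at the level of the completed measures rather than for definable sets directly.
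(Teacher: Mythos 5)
Your proof is correct and follows essentially the same route as the paper's, which simply combines Lemma~\ref{lem:int-Stan} with the real change of variables formula applied to the injective \bilip map $\cl f$. The only difference is that you spell out details the paper leaves implicit, in particular that $\Imag(\cl f)$ is co-negligible in $\st(V)$ (needed so that $\Leb^n_\Real(C \cap \Imag \cl f) = \Leb^n(C)$), which is a genuine and correctly handled refinement rather than a change of method.
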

In fact, by the change of variables formula (on the reals!) and Lemma~\ref{lem:int-Stan},
\[
\Leb^n(C) = \int_{\cl f^{-1}(C)} \abs{\det \Diff \cl f}) =
\int_{(\st f)^{-1}(C)} \st(\abs{\det \Diff f}). 
\]

\begin{claim}\label{cl:change-variables-2}
Let $h : V \to \Rb$ be an integrable  function.
Then,
\[
\int_{V} h = \int_U \st(\abs{\det \Diff f})\; h \comp f .
\]
\end{claim}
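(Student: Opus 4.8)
The plan is to recognize that Claim~\ref{cl:change-variables-1} is precisely Claim~\ref{cl:change-variables-2} in the special case $h=\mathbf 1_C$, and then to climb the usual measure-theoretic ladder from indicators to arbitrary integrable functions. It is cleanest to first reformulate Claim~\ref{cl:change-variables-1} as a statement about pushforward measures. Writing $\psi := \st(\abs{\det \Diff f})$ for the real-valued function on $\st(U)$ it defines (a.e.) and $\phi := \st f = \cl f : \st(U) \to \st(V)$ for the injective \bilip map supplied by Lemma~\ref{lem:int-Stan}, Claim~\ref{cl:change-variables-1} reads $\Leb^n(C) = (\psi\,\Leb^n)(\phi^{-1}(C))$ for every measurable $C \subseteq \st(V)$; that is, the pushforward $\phi_*(\psi\,\Leb^n)$ coincides with $\Leb^n$ on $\st(V)$. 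Claim~\ref{cl:change-variables-2} is then exactly the abstract change-of-variables identity $\int h \, d(\phi_*\mu) = \int (h\comp\phi)\,d\mu$ for this pushforward.

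With this reformulation in hand, I would prove the identity by the standard four steps. For $h=\mathbf 1_C$ the formula is Claim~\ref{cl:change-variables-1} verbatim, since $\int_V \mathbf 1_C = \Leb^n(C)$ and $\mathbf 1_C \comp f$ equals $\mathbf 1_{(\st f)^{-1}(C)}$ (the composition read through standard parts). By linearity of the integral the formula extends to nonnegative simple functions. For a general nonnegative measurable $h$ I would pick simple functions $h_k \uparrow h$ and pass to the limit by monotone convergence on both sides, using $\psi \geq 0$; since the o-minimal integral is by definition the genuine Lebesgue integral of the relevant standard parts on $\Real^n$, this is just the classical monotone convergence theorem. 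Finally, for integrable $h$ I would split $h = h^+ - h^-$ and apply the nonnegative case to each part, integrability guaranteeing that no $\infty - \infty$ arises.

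The main work, and the only genuinely non-routine point, is the bookkeeping that lets the composition $h \comp f$ on $\K^n$ be replaced by the honest real composition $h \comp \phi$ under the standard-part integral. Here I would invoke Lemma~\ref{lem:int-Stan}(iii), which gives $\st(f(x)) = \cl f(\st x) = \phi(\st x)$ off the negligible set $\st^{-1}(E)$, together with the Remark preceding Claim~\ref{cl:change-variables-1} on $\cl{(f\comp g)} = \cl f \comp \cl g$, so that $\st\bigl(\st(\abs{\det\Diff f})\,(h\comp f)\bigr) = \psi\,(h\comp\phi)$ almost everywhere; this is exactly the integrand whose Lebesgue integral over $\st(U)$ computes $\int_U \st(\abs{\det\Diff f})\,h\comp f$. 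Throughout I must check that each set appearing (the preimage $\phi^{-1}(C)$, the level sets of the $h_k$) is measurable and that the exceptional sets stay negligible, but once the pushforward reformulation is set up these verifications are routine.
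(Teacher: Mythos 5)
Your proof is correct and follows essentially the same route as the paper: the paper's own argument is exactly the two-line version of your ladder (Claim~\ref{cl:change-variables-1} gives the result for simple functions, and then ``by continuity'' --- i.e.\ the monotone-convergence/positive-negative-part approximation you spell out --- for general integrable $h$). Your pushforward reformulation and the explicit care with $\st(f(x)) = \cl f(\st x)$ off the negligible set are just a more detailed write-up of what the paper leaves implicit.
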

Claim~\ref{cl:change-variables-1} implies that the statement is true if $h$ is a simple
function.
By continuity, the statement is true for any integrable function~$h$.

In particular, we can apply Claim~\ref{cl:change-variables-2} to the function
\[
h: x \mapsto
\begin{cases}
\st(g(x)) & \text{if } x \in f(A),\\
0 & \text{otherwise},
\end{cases}
\]
and obtain the conclusion.
\end{proof}

\begin{lem}[Fubini's theorem]
$\Leb^{n+m}$ is the completion of the product measure $\Leb^n \times \Leb^m$.
Therefore, if $D$ is the interval $[0,1] \subset \K$ 
and given $f: \D^{n+m} \to \D$ definable,
\[
\int_{\D^{n+m}} f(x,y) \de \Leb^{n+m}(x,y) =
\int_{\D^m} \!\! \int_{\D^n} f(x,y) \de \Leb^m(x) \de \Leb^n(y).
\]
\end{lem}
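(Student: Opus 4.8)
The plan is to concentrate all the difficulty in the first assertion, that $\Leb^{n+m}$ is the completion of $\mu:=\Leb^n\times\Leb^m$, and to deduce the integral formula from the classical Fubini--Tonelli theorem. Granting the first assertion, the formula is immediate: $\D^{n+m}$ has finite measure (since $\st(\D)=[0,1]$, so $\Leb^{n+m}(\D^{n+m})=1$), and for definable $f:\D^{n+m}\to\D$ the function $\st(f)$ is bounded and $\Leb^{n+m}$\hyph measurable, hence integrable. Applying classical Fubini to $\st(f)$ for the completed product measure gives the iterated integral, and since $\st(f)(x,y)=\st(f(x,y))$ the inner integral is exactly $\int_{\D^n}f(\cdot,y)$.

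For the first assertion I would induct on $m$, reducing to the case $m=1$: once $\Leb^{k+1}$ is known to be the completion of $\Leb^k\times\Leb^1$ for every $k$, the general statement follows from the associativity of products and completions for $\sigma$\hyph finite measures (note $\Leb^n$ is $\sigma$\hyph finite, as $\Kb^n=\bigcup_k[-k,k]^n$ with each box of finite measure). So fix $k$, put $\nu:=\Leb^{k+1}$ and $\mu:=\Leb^k\times\Leb^1$ on the product $\sigma$\hyph ring $\Sigma:=\mathcal B_k\otimes\mathcal B_1$. The two measures agree on definable rectangles: for definable $A\subseteq\Kb^k$ and $B\subseteq\Kb$ one has $\st(A\times B)=\st(A)\times\st(B)$, so by Remark~\ref{rem:BO} $\nu(A\times B)=\Leb_\Real(\st A)\,\Leb_\Real(\st B)=\Leb^k(A)\,\Leb^1(B)=\mu(A\times B)$. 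As definable rectangles form a $\pi$\hyph system generating $\Sigma$, uniqueness of extension of $\sigma$\hyph finite measures gives $\mu=\nu$ on all of $\Sigma$.

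The heart of the matter is to show that every definable $X\subseteq\Kb^{k+1}$ lies in the $\mu$\hyph completion $\cl{\Sigma}^{\mu}$ of $\Sigma$; once this is done the values agree automatically (a set of $\cl{\Sigma}^{\mu}$ is $S\cup N$ with $S\in\Sigma$ and $N$ inside a $\Sigma$\hyph set of $\mu$\hyph, hence $\nu$\hyph, measure zero), and a symmetric argument identifies the two completed measure spaces. By cell decomposition it suffices to treat a single cell $C$, which is either the graph of a definable function over a base $D\subseteq\Kb^k$ or a band $W=\{(x,t):x\in D,\ f(x)<t<g(x)\}$ with $f,g:D\to\Kb$ definable. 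For the graph, $\dim(\st C)<k+1$ by Remark~\ref{rem:HPP}, so $\Leb^{k+1}_\Real(\st C)=0$; covering $\st C$ by real open rectangles and pulling back by $\st^{-1}$ — observing that $\st^{-1}(R)\in\Sigma$ with $\mu(\st^{-1}R)=\Leb_\Real(R)$ for any real rectangle $R$ — yields $\mu^*(C)=0$, so $C\in\cl{\Sigma}^{\mu}$.

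The delicate case, and the one I expect to be the main obstacle, is the band. The natural product\hyph measurable approximation is $\tilde W:=\{(x,t):x\in D,\ \st f(x)<\st t<\st g(x)\}$, which lies in $\Sigma$ because, for rational $q$, $\{x\in D:\st g(x)>q\}=\bigcup_j\{x:g(x)\ge q+1/j\}\in\mathcal B_k$ and $\{t:\st t<q\}=\bigcup_j\{t:t\le q-1/j\}\in\mathcal B_1$. One checks that $W\mathbin{\triangle}\tilde W$ is contained in the infinitesimal tubes $\Gamma_f\cup\Gamma_g$, where $\Gamma_g:=\{(x,t):x\in D,\ \st t=\st g(x)\}$; by Lemma~\ref{lem:int-Stan}, $\st g(x)=\cl g(\st x)$ off a negligible set, so $\st(\Gamma_g)$ lies, up to a null set, in the graph of the $\Real_K$\hyph definable function $\cl g$, whence $\Leb^{k+1}_\Real(\st\Gamma_g)=0$ and the pull\hyph back covering argument gives $\mu^*(\Gamma_g)=0$. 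Thus $W\in\cl{\Sigma}^{\mu}$, completing the induction. The real difficulty is exactly this comparison between the o\hyph minimal strict inequality $t<g(x)$ and its standard\hyph part shadow $\st t<\st g(x)$: since $\st$ is not definable one cannot work inside $\K$, and must instead control the infinitesimal boundary tube through the $\sigma$\hyph ring $\mathcal B$ together with the dimension bound of Remark~\ref{rem:HPP}.
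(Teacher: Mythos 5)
Your proposal is correct, and it does considerably more work than the paper: the paper's entire proof is the single line ``Follows from the definition of $\Leb^n$ in \cite{bo}'', i.e.\ the authors defer to the construction of the Berarducci--Otero measure, where the identity $\Leb^{n+m}(X)=\Leb^{n+m}_\Real(\st X)$ for definable bounded $X$ together with the classical Fubini theorem is taken to settle the matter. The genuinely nontrivial point, which the paper leaves implicit and you isolate correctly, is that a definable subset of $\Kb^{k+1}$ need not lie in the product $\sigma$\hyph ring $\mathcal B_k\otimes\mathcal B_1$ itself but only in its $\mu$\hyph completion; your treatment of the band case --- comparing $W$ with its shadow $\tilde W$ (note that in fact $\tilde W\subseteq W$, so only $W\setminus\tilde W$ needs controlling) and killing the infinitesimal tubes $\Gamma_f,\Gamma_g$ via Lemma~\ref{lem:int-Stan} together with the pull\hyph back covering argument --- is exactly the right way to see this. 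Two small points to patch, neither affecting the substance: (a) sets such as $\{t: t\le q-1/j\}$ are unbounded, and $\mathcal B_1$ is only a $\sigma$\hyph ring, so you should intersect all the sets appearing in your decomposition of $\tilde W$ with a finite box containing $W$ (such a box exists because $f,g$ map into $\Kb$ and a definable subset of $\Kb$ is bounded by a finite element); (b) the uniqueness\hyph of\hyph extension step should be phrased in the $\sigma$\hyph ring setting, with the definable rectangles of finite diameter providing the required countable $\sigma$\hyph finite cover of each set of $\mathcal B_k\otimes\mathcal B_1$.
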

\begin{proof}
Follows from the definition of $\Leb^n$ in~\cite{bo}.
\end{proof}

\subsection{Measure on semialgebraic sets}
\label{sec:algebraic-measure}
\begin{dfn}
We say that $E \subseteq \K^n$ is \zsalg if $E$ is definable without
parameters in the language of pure fields.  
If $E \subseteq \K^n$ is \zsalg
we denote the subset of $\Real^n$ defined by the same formula that defines  $E$ by $E_\Real$. 
\end{dfn}
\begin{rem}
Let $E \subseteq \Kb^n$ be \zsalg.
Then, $\st(E) = \cl{E_\Real}$.
\end{rem}

Let $E \subseteq \K^n$ be closed and \zsalg submanifold.
Working in local charts, from~\cite{bo} one can easily define a measure $\LEK$
on the $\sigma$-ring generated by the definable subsets of $E$ of bounded
diameter. 
%(\cf the proof of \cite[Theorem~10.7]{HPP}).
We will denote in the same way the completion of $\LEK$.
Notice that $\Leb^{\K^n}= \LnK$.

\begin{rem}
Let $E$ be a closed, \zsalg submanifold of $\K^n$ 
of dimension~$e$, $F := \st(E)$, and $C \subseteq E$ be definable and bounded.
Then, $\LEK(C) = \Leb_\Real^F(\st(C))$, where $\Leb_{\Real}^F$ is the $e$-dimensional
Hausdorff measure on~$F$.
\end{rem}
One could also take the above remark as the definition of $\LEK$ 
on $E \cap \Kb^n$.

\section{Rectifiable partitions}
\label{sec:partition}

Theorem \ref{decomposition} shows that every definable  set $A\subset\Kb^n$
has a partition into definable sets which are $M_n$-cells after an orthonormal
change of coordinates (where $M_n\in\Q$ depends only on $n$). 
In \cite{pawlucki}, the author shows that a permutation of the coordinates suffices. The proof of \ref{decomposition} follows closely that of \cite{kurdyka}. The partition in \ref{decomposition} is  then used in Corollary \ref{cor:partition} to show that definable sets have a rectifiable partition.

\begin{dfn}
Let $L: V \to W$ be a linear map between normed $\K$-vector spaces.   The norm
of $L$ is given by
\[
\norm L := \sup_{\abs v = 1}\abs{L(v)}.
\]
\end{dfn}

For $V,W$ in the Grassmannian of $e$-dimensional linear subspaces of $\K^n$,
namely $ \mathcal G_e(\K^n)$, let $\pi_V$ and $\pi_W\in\End_{\K}(\K^n)$ be the
orthogonal projections onto $V$ and $W$ respectively. 
In this way we have a canonical embedding $\mathcal
G_e(\K^n)\subset\End_{\K}(\K^n)$. 
The \textbf{distance function} on the Grassmannian is given by the inclusion
above: 
\[
\delta(V,W) := \norm{\pi_V-\pi_W}.
\] 

 For $P$ in $\mathcal G_1(\K^n)$ and $X\in \mathcal G_k(\K^n)$, define
\[
\delta(P,X) := \abs{v-\pi_X(v)},
\] 
where $\pi_X$ is the orthogonal projection onto $X$, and $v$ is a generator of $P$ of norm 1.
Note that $\delta(P,X)=0$ if and only if $P\subset X$, $0\leq\delta(P,X)\leq 1$ and $\delta(P,X)=1$ if and only if $P\perp X$.
Note also that $\delta(P, X)$ is the definable analogous of the sine of the
angle between $P$ and~$X$.

\begin{lem}\label{epsilon}
Let $n\in\mathbb{N}_{>0}$. Then there exists an
$\epsilon_n\in\mathbb{Q}_{>0}$, $\epsilon_n <1$, such that for any
$X_1,\dots,X_{2n}\in \mathcal G_{n-1}(\K^n)$, there is a line $P\in \mathcal
G_1(\K^n)$ such that whenever $Y_1,\dots,Y_{2n}\in \mathcal G_{n-1}(\K^n)$ 
and  
\[\begin{aligned}
\delta(X_i,Y_i) &< \epsilon_n, \quad i=1,\dots,2n, \quad \text{then}\\
\delta(P,Y_i)   &> \epsilon_n, \quad i=1,\dots,2n.
\end{aligned}\]
\end{lem}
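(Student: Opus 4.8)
The plan is to reduce the statement to the field of real numbers and there settle it by a measure-theoretic counting argument on the sphere. The key observation is that, for a \emph{fixed} rational $\epsilon_n$, the whole assertion is a first-order sentence in the language of ordered fields without parameters. Indeed, $\mathcal G_e(\K^n)$ is identified with the set of symmetric idempotent endomorphisms of trace $e$, which is $\emptyset$\hyph semialgebraic, and both $\delta(V,W)=\norm{\pi_V-\pi_W}$ and $\delta(P,X)$ are $\emptyset$\hyph semialgebraic functions (the operator norm being semialgebraic, e.g.\ as the largest root of $\det(A^{t}A-\lambda I)$). Since $\K$ is real closed it is elementarily equivalent to $\Real$; hence it suffices to exhibit a single rational $\epsilon_n\in(0,1)$ for which the sentence holds over $\Real$, and that same $\epsilon_n$ then works over~$\K$.

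Next I would dispose of the perturbation $Y_i$ by a triangle inequality for $\delta$. For a line $P$ generated by $v$ with $\abs v=1$, and any $X,Y\in\mathcal G_{n-1}(\K^n)$,
\[
\delta(P,X)=\abs{v-\pi_X v}\le \abs{v-\pi_Y v}+\abs{\pi_Y v-\pi_X v}\le \delta(P,Y)+\delta(X,Y),
\]
so $\delta(P,Y)\ge \delta(P,X)-\delta(X,Y)$. Consequently, if for given $X_1,\dots,X_{2n}$ I can produce a line $P$ with $\delta(P,X_i)>2\epsilon_n$ for every $i$, then for any $Y_i$ with $\delta(X_i,Y_i)<\epsilon_n$ I obtain $\delta(P,Y_i)>2\epsilon_n-\epsilon_n=\epsilon_n$, as required. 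Thus, over $\Real$, the task reduces to finding $\epsilon_n$ such that every family of $2n$ hyperplanes admits a line making angle bounded below (by $2\epsilon_n$) with all of them simultaneously.

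For this reduced statement I would work on the unit sphere $S^{n-1}\subset\Real^n$ with its surface measure~$\mu$. Writing $u$ for a unit normal of a hyperplane $X$, one has $\delta(P_v,X)=\abs{\la v,u\ra}$ for the line $P_v$ spanned by $v\in S^{n-1}$, so the ``bad'' set $B_X(s):=\set{v\in S^{n-1}:\delta(P_v,X)\le s}$ is the band $\set{v:\abs{\la v,u\ra}\le s}$. By rotational invariance its measure $\beta(s):=\mu(B_X(s))$ is independent of $X$, and $\beta(s)\to 0$ as $s\to 0$ since $B_X(s)$ shrinks to the equator $X\cap S^{n-1}$, of dimension $n-2$. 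Choosing a rational $\epsilon_n\in(0,1)$ small enough that $2n\,\beta(2\epsilon_n)<\mu(S^{n-1})$, the union $\bigcup_{i=1}^{2n}B_{X_i}(2\epsilon_n)$ cannot cover $S^{n-1}$; any $v$ in the complement spans a line $P$ with $\delta(P,X_i)>2\epsilon_n$ for all $i$. This proves the reduced statement over $\Real$, and the transfer principle of the first paragraph yields the lemma over~$\K$.

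The only genuinely delicate point is the bookkeeping in the first paragraph: one must verify that the Grassmannians, the projections $\pi_V$, and the operator norm are \emph{uniformly} $\emptyset$\hyph definable, so that the assertion for a fixed rational $\epsilon_n$ is a single parameter-free first-order sentence to which Tarski's transfer applies. Granting this, the geometric core (the triangle inequality and the vanishing of $\beta(s)$) is routine.
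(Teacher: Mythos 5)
Your proposal is correct and follows essentially the same route as the paper: a measure-counting argument on $S^{n-1}$ showing that the $2n$ ``bad'' bands of width $2\epsilon_n$ around the equators cannot cover the sphere, combined with the triangle inequality $\delta(P,Y_i)\geq\delta(P,X_i)-\delta(X_i,Y_i)$ to absorb the perturbation, and a transfer argument to pass from $\Real$ to~$\K$. The paper transfers only the non-covering statement rather than the full sentence, but this is an inessential difference.
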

\begin{proof} 
For $\epsilon>0$ define $S_i(\epsilon)=\{v\in S^{n-1} : |v-\pi_{X_i}(v)| \leq
2\epsilon\}$. %where $\pi_{X_i}$ is the orthogonal projection onto~$X_i$.  
If $\K=\mathbb{R}$, let $\epsilon_n\in\mathbb{Q}_{>0}$ be small enough so that
$2n \Vol(S_1(\epsilon_n)) < \Vol(S^{n-1})$, where $\Vol$ is the measure
$\Leb^{S^{n-1}}$ defined in \S\ref{sec:algebraic-measure}.
Then
\[
\Vol(\bigcup_{i=1}^{2n} S_i(\epsilon_n)) \leq 2n \Vol(S_1(\epsilon_n)) <
\Vol(S^{n-1})
\]
 and therefore
\begin{equation}\label{don't cover S^n}
\bigcup_{i=1}^{2n}S_i(\epsilon_n)\not=S^{n-1}.
\end{equation}
The same $\epsilon_n$ will necessarily satisfy (\ref{don't cover S^n}) for any field $\K$ containing $\mathbb{R}$. 

Now, we choose
\[
v\in S^{n-1}-\bigcup_{i=1}^{2n}S_i(\epsilon_n)
\]
 and let $P:=\langle v\rangle$. Then
\[
\delta(P,Y_i)=|v-\pi_{Y_i}v|\geq|v-\pi_{X_i}v|-|\pi_{X_i}v-\pi_{Y_i}v|>\epsilon_n.
\qedhere
\]
\end{proof}

\begin{dfn}
Let $\epsilon>0$. A definable embedded submanifold $M$ of $K^n$ is \textbf{$\epsilon$-flat} if for each $x,y\in M$ we have $\delta(TM_x,TM_y)<\epsilon,$ where $TM_x$ denotes the tangent space to $M$ at $x$.
\end{dfn}

\begin{lem}\label{flat} 
Let $A\subset\K^n$ be a definable submanifold of dimension $e$ and
$\epsilon\in\Real_{>0}$. Then there is a cell decomposition $A =
\bigcup_{i=0}^k A_i$ of $A$ such that for every $i$ we have either $\dim(A_i)
< \dim(A)$ or $A_i$ is an $\epsilon$-flat submanifold of~$\K^n$. 
\end{lem}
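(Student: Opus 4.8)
The plan is to decompose $A$ according to the behaviour of its Gauss map. Let $\gamma\colon A\to\mathcal G_e(\K^n)$ be the map sending $x$ to the tangent space $TA_x$; since $A$ is a definable $\Cone$ submanifold of dimension $e$, the tangent space depends definably on $x$, so $\gamma$ is a definable map. By the very definition of the distance $\delta$, a definable submanifold $M\subseteq A$ is $\epsilon$-flat exactly when the $\delta$-diameter of $\gamma(M)$ is less than $\epsilon$. Thus it suffices to partition $A$ into definable pieces on which $\gamma$ has small image, and then refine this partition into cells.

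First I would cover the Grassmannian by finitely many definable sets of small diameter. For $V\in\mathcal G_e(\K^n)$ and $r>0$ write $B(V,r):=\{W\in\mathcal G_e(\K^n):\delta(V,W)<r\}$; by the triangle inequality for the operator norm, $B(V,r)$ has $\delta$-diameter at most $2r$. Over $\Real$ the Grassmannian $\mathcal G_e(\Real^n)$ is compact, so there are finitely many $V_1,\dots,V_m\in\mathcal G_e(\Real^n)$ with $\mathcal G_e(\Real^n)=\bigcup_{j} B(V_j,\epsilon/3)$. The statement ``the balls $B(V_j,\epsilon/3)$ cover $\mathcal G_e$'' is first-order with real parameters $V_1,\dots,V_m$; exactly as in the proof of Lemma~\ref{epsilon}, since $\Real\preceq\K$ the same finite family covers $\mathcal G_e(\K^n)$. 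Setting $U_j:=B(V_j,\epsilon/3)$, I obtain definable sets of $\delta$-diameter $<\epsilon$ whose union is $\mathcal G_e(\K^n)$.

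Next I would pull this cover back and turn it into a partition: put $P_j:=\gamma^{-1}(U_j)\setminus\bigcup_{l<j}\gamma^{-1}(U_l)$, a finite definable partition of $A$ with $\gamma(P_j)\subseteq U_j$. Applying cell decomposition to $A$ compatibly with $\{P_1,\dots,P_m\}$, and arranging the cells to be of class $\Cone$, I get $A=\bigcup_{i=0}^k A_i$ with each $A_i$ contained in a single $P_j$. If $\dim A_i<\dim A$ the first alternative holds. If $\dim A_i=\dim A=e$, then $A_i$ is an $e$-dimensional $\Cone$ submanifold contained in the $e$-dimensional submanifold $A$, so $T(A_i)_x=TA_x=\gamma(x)$ for every $x\in A_i$; since $\gamma(A_i)\subseteq\gamma(P_j)\subseteq U_j$ has $\delta$-diameter $<\epsilon$, we conclude $\delta(T(A_i)_x,T(A_i)_y)<\epsilon$ for all $x,y\in A_i$, that is, $A_i$ is $\epsilon$-flat.

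The main obstacle is the construction of the finite small-diameter cover valid uniformly over every $\K\supseteq\Real$: one cannot simply invoke compactness inside a non-archimedean $\K$, and must instead descend to $\Real$, use its compactness, and transfer the (first-order) covering property back up, precisely in the spirit of Lemma~\ref{epsilon}. The remaining ingredients---definability of $\gamma$ and the identification $T(A_i)_x=TA_x$ on the top-dimensional cells---are routine consequences of o-minimal cell decomposition.
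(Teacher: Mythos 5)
Your proof is correct and follows essentially the same route as the paper: cover the Grassmannian by finitely many balls of small $\delta$-diameter, pull them back along the Gauss map, and take a compatible $\Cone$-cell decomposition so that the top-dimensional cells are $\epsilon$-flat. The only difference is that you spell out the transfer of the finite cover from $\mathcal G_e(\Real^n)$ to $\mathcal G_e(\K^n)$, which the paper leaves implicit.
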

\begin{proof} 
Cover $\mathcal G_e(\K^n)$ by a finite number of balls $B_i$ of radius $\epsilon/2$; and consider the Gauss map $G:A\to\mathcal G_e(\K^n)$ taking an element $a$ of $A$ to $TA_a$. Take a cell decomposition of $\K^e$ compatible with $A$ and partitioning each $G^{-1}(B_i)$. Then the $e$-dimensional cells contained in $A$ are $\epsilon$-flat. 
\end{proof}
 
 \begin{lem}\label{top dimension}Let $\epsilon\in\mathbb{Q}_{>0}$, and let $A\subset\Kb^n$ be an open  %bounded is not a necessary assumption, but less cases need to be consider under this assumption.
 definable set. Then there are open, pairwise disjoint cells $A_1,\dots,A_p\subset A$ such that 
\begin{itemize}
  \item[\rm{(i)}]$\dim(A - \bigcup A_i)<n.$
  \item[\rm{(ii)}]For each $i$, there are definable, pairwise disjoint sets $B_1,\dots,B_k$ (with $k$ depending on $i$) such that
  \begin{itemize}
    \item[\rm{(a)}]$k\leq 2n$;
    \item[\rm{(b)}]each $B_j$ is a definable subset of $\partial A_i$ and  an
    $\epsilon$-flat, 					
    $(n-1)$-dimensional, $\Cone$-submanifold of $\K^n$;
    \item[\rm{(c)}]$\dim(\partial A_i - \bigcup_{j=1}^k B_j)<n-1$.
  \end{itemize}
\end{itemize}
\end{lem}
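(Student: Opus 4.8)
The plan is to argue by induction on $n$. The bookkeeping behind the bound $k\le 2n$ is that an open $n$\hyph cell, built over a base cell in the first $n-1$ coordinates, has exactly two boundary faces arising from its own floor and ceiling functions, while every other $(n-1)$\hyph dimensional boundary piece sits over the frontier of the base cell and is controlled by the lemma in dimension $n-1$; this yields $2+2(n-1)=2n$. For $n=1$ a bounded open $A\subset\Kb$ is a finite union of open intervals, each an open cell whose frontier consists of two finite endpoints, so $k=2=2n$ and the $B_j$ are $\epsilon$\hyph flat $0$\hyph dimensional submanifolds. For $n>1$ a cell decomposition of $A$ has open $n$\hyph cells covering $A$ up to dimension $<n$, so it suffices to treat one open cell $C$ and collect the outputs over all of them. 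Writing $\Pi\colon\K^n\to\K^{n-1}$ for the projection onto the first $n-1$ coordinates, we have $C=\{(x',x_n):x'\in D,\ f(x')<x_n<g(x')\}$ for an open cell $D\subset\Kb^{n-1}$ and continuous definable $f<g$, finite because $C$ is bounded.

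First I apply Lemma~\ref{flat} to the graphs $\Gamma(f)$ and $\Gamma(g)$, obtaining a cell decomposition of $D$ over whose open cells both graphs are $\epsilon$\hyph flat $\Cone$ hypersurfaces. To each such open cell I apply the inductive hypothesis, producing open cells $D_1,\dots,D_m$ with $\dim\bigl(D-\bigcup_l D_l\bigr)<n-1$ and each $\partial D_l$ covered, up to dimension $<n-2$, by at most $2(n-1)$ pairwise disjoint $\epsilon$\hyph flat $(n-2)$\hyph dimensional $\Cone$\hyph submanifolds $E_1,\dots,E_{k'}$ of $\K^{n-1}$ (depending on $l$); passing to a subset preserves flatness, so $\Gamma(f)$ and $\Gamma(g)$ remain $\epsilon$\hyph flat over each $D_l$. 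The output cells are the bands $A_i:=\{(x',x_n):x'\in D_l,\ f(x')<x_n<g(x')\}$, whose union covers $C$ up to dimension $<n$, giving (i). For each $A_i$ I take $B_j$ to be the two graphs $\Gamma(f|_{D_l})$, $\Gamma(g|_{D_l})$ together with the lifted lateral pieces $\tilde E_j:=\{(x',x_n):x'\in E_j,\ f(x')<x_n<g(x')\}$, refining so that $f<g$ on each $E_j$ (the locus $f=g$ contributes dimension $<n-1$). There are at most $2+2(n-1)=2n$ such sets, pairwise disjoint and contained in $\partial A_i$, and the remaining boundary lies over $\partial D_l-\bigcup_j E_j$ or in the degenerate loci, hence has dimension $<n-1$; this yields (a) and (c).

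The crux is that each lifted piece $\tilde E_j$ is again $\epsilon$\hyph flat. At a point $(x',x_n)\in\tilde E_j$ the tangent space splits orthogonally as $T(\tilde E_j)_{(x',x_n)}=T(E_j)_{x'}\oplus\la e_n\ra$, since $x_n$ ranges freely in the open fibre $(f(x'),g(x'))$ while motion along $E_j$ leaves $x_n$ fixed. As $T(E_j)_{x'}\subset\K^{n-1}\times\{0\}$ is orthogonal to $e_n$, we have $\pi_{T(E_j)_{x'}\oplus\la e_n\ra}=\pi_{T(E_j)_{x'}}+\pi_{\la e_n\ra}$, so for two points of $\tilde E_j$ the terms $\pi_{\la e_n\ra}$ cancel in the difference and the Grassmannian distance between the tangent spaces equals $\delta\bigl(T(E_j)_{x'},T(E_j)_{y'}\bigr)<\epsilon$. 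Thus $\tilde E_j$ inherits the $\epsilon$\hyph flatness of $E_j$, and being an open subset of $E_j\times\K$ it is an $(n-1)$\hyph dimensional $\Cone$\hyph submanifold of $\K^n$ contained in $\partial A_i$.

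The main obstacle is maintaining the bound $k\le 2n$ while forcing every face to be $\epsilon$\hyph flat: flattening the faces of a given open cell directly, by applying Lemma~\ref{flat} to each face, would shatter them into uncontrollably many $\epsilon$\hyph flat pieces. The resolution is to flatten only at the level of the base cell $D$ before lifting---the two graph faces flattened by hand and the $2(n-1)$ lateral faces flattened automatically by the inductive hypothesis---combined with the orthogonal\hyph splitting computation above, which guarantees that lifting along the free coordinate $x_n$ neither creates new $(n-1)$\hyph dimensional faces nor destroys flatness.
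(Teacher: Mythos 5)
Your proof is essentially the paper's: the same induction on $n$, the same reduction to an open cell $(f,g)_X$ with the two bounding graphs flattened by refining the base, the same lift of the inductive data through the cylinder, and the same count $2+2(n-1)=2n$; your tangent-space splitting argument for the $\epsilon$-flatness of the lifted lateral faces is a point the paper leaves implicit, so it is a welcome addition. The one place the paper is more careful is the definition of those lateral faces: since $f$ and $g$ need not be defined on, or extend continuously to, the part of $\partial D_l$ lying in $\partial D$, the paper takes $B_j$ to be the union of the $(n-1)$-dimensional cells of a decomposition of $(E_j\times\K)\cap\partial A_i$ rather than your $(f,g)_{E_j}$, which is ill-defined there; your version needs the routine extra step of discarding the lower-dimensional locus of $\partial D_l$ where $f,g$ fail to extend continuously.
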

\begin{proof}By induction on $n$. The lemma is clear for $n=1$. Assume that $n>1$ and the  lemma holds for smaller values of $n$. 

Take a cell decomposition of  $\cl A$ compatible with $A$ into $\Cone$-cells. 
Let $C$ be an open
cell in this decomposition; it suffices to prove the lemma for $C$. Note that
$C=(f,g)_X$, where $X$ is an open cell in $\K^{n-1}$ and $f,g$ are definable
$\Cone$-functions on $X$. Take finite covers of $\Gamma(f)$ and $\Gamma(g)$ by
open, definable sets $U_i$ and $V_j$, respectively, such that each
$U_i\cap\Gamma(f)$ and each $V_j\cap\Gamma(g)$ is $\epsilon$-flat (to do this,
take a finite cover of the Grassmannian by $\epsilon$-balls and pull it back
via the Gauss maps for $\Gamma(f)$ and $\Gamma(g)$). The collection of all
sets $\pi(U_i)\cap\pi(V_j)$ is an open cover $\mathcal{O}$ of $X$. By the cell
decomposition theorem, there is  a $\Cone$-cell decomposition of $X$
partitioning each set in $\mathcal{O}$. Let $S$ be an open cell in this
decomposition, and let $C_0:=(f,g)_S$. It suffices to prove the lemma for
$C_0$.  By the inductive hypothesis, we can find $A_1',\dots,A_p'\subset S$
and $B_1',\dots,B_k'\subset\partial A_i'$ satisfying the conditions (i) and
(ii) above (with $n$ replaced by $n-1$). Define
\[
A_i:=(f,g)_{A_i'},\qquad i=1,\dots,p.
\]
Then $\dim(C_0-\bigcup_{i=1}^pA_i)<n$. 
For $j=1,\dots,k$, the set $(B_j'\times\K)\cap\partial A_i$ is definable. Take
a $\Cone$-cell decomposition of this set, and let $B_j$ be the union of the
$(n-1)$-dimensional cells in this decomposition (note that $B_j$ may be
empty). Then $B_j$ is an $\epsilon$-flat $\Cone$-submanifold of $\K^n$ and
\[
\dim\Pa{((B_j'\times\K)\cap\partial A_i)-B_j}<n-1.
\]
Define $B_{k+1}:=\Gamma(f\big| A_i')$ and $B_{k+2}:=\Gamma(g\big| A_i')$; by construction these are $\epsilon$-flat.
It is routine to see that $\partial A_i\subset B_{k+1}\cup B_{k+2}\cup(\partial A_i'\times\K)$.
Thus
\begin{align*}
\partial A_i-\bcup_{j=1}^{k+2}B_j & \subset((\partial
A_i'\times\K)\cap\partial A_i)- \bcup_{j=1}^kB_j\\
&=(\bcup_{j=1}^k((B_j'\times\K)\cap\partial A_i)\cup E)-\bcup_{j=1}^kB_j\\
&\subset\bcup_{j=1}^k(((B_j'\times\K)\cap\partial A_i)-B_j)\cup E,
\end{align*}
where $E$ is a definable set with $\dim(E)<n-1$. 
Therefore $\dim(\partial A_i-\bigcup_{j=1}^{k+2}B_j)<n-1$.
Since $k\leq 2 (n-1)$, we get $k+2\leq 2 n$ and the lemma is proved.
\end{proof}

\begin{dfn}
Let $U \subseteq \K^n$ be open and let  $f: U \to \K^{m}$ be definable. %and $M\in\Real$.
Given $0 < M \in \K$, we say that $f$ is an  $M$-function if 
$\abs{\Diff  f}\leq M$.  
We say that $f$ %a definable function $f: U \to \K^m$ 
has finite derivative if $\abs{\Diff f}$ is finite. 
\end{dfn}
Notice that, by $\omega$-saturation of~$\K$, if $f$ is definable and has finite derivative, then it is an $M$-function for some finite~$M$.

Let $M\in\K_{>0}$. An $M$-cell is a $\Cone$-cell where the $\Cone$ functions that define the cell are $M$-functions. More  precisely:

\begin{dfn}\label{M-cell}
Let $(i_1,\dots,i_m)$ be a sequence of zeros and ones, and $M\in\K_{>0}$. An $(i_1,\dots,i_m)$-$M$-cell is a subset of $\K^m$ defined inductively as follows:
\begin{itemize}
 \item[\rm{(i)}] A (0)-$M$-cell is a point $\{r\}\subset\K$, a (1)-$M$-cell is an interval  		$(a,b)\subset\K$, 	where $a,\ b\in\K$.
 \item[\rm{(ii)}] An $(i_1,\dots,i_m,0)$-$M$-cell is the graph $\Gamma(f)$ of a definable $M$-function $f:X\to\K$ of class $\Cone$,  where $X$ is an $(i_1,\dots,i_m)$-$M$-cell; an 	$(i_1,\dots,i_m,1)$-$M$-cell is a set 
\[
(f,g)_X:=\{(x,r)\in X\times\K:f(x)<r<g(x)\},
\]
 where $X$ is an $(i_1,\dots,i_m)$-$M$-cell and $f,g:X\to\K$ are definable $M$-functions of class $\Cone$ on $X$ such that for all $x\in X$, $f(x)<g(x)$.
\end{itemize}
\end{dfn}

\begin{thm}\label{decomposition}
Let $A\subset\Kb^n$ be definable. Then there are definable, pairwise disjoint sets 
$A_i$, $i=1,\dots,s,$ such that $A=\bcup_i A_i$ and for each $A_i,$ there is a change of coordinates 	$\sigma_i\in O_n(\K)$ such that $\sigma_i(A_i)$ is an $M_n$-cell, where $M_n\in\mathbb{Q}_{>0}$ is a constant depending only on $n$.
\end{thm}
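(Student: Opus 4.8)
The plan is to argue by induction on $n$, reducing an arbitrary definable $A$ to cells via the Cell Decomposition Theorem and treating each cell separately. The base case $n=1$ is immediate, since a $1$-dimensional cell is a point or an interval and hence an $M_1$-cell for any $M_1$. For the inductive step I would first record the geometric meaning of Lemma~\ref{epsilon}: if $P$ is rotated to the last coordinate axis and a hypersurface is written over $\K^{n-1}$ as the graph $\Gamma(f)$ of a $\Cone$-function $f$, then at each point $\delta(P,T\Gamma(f))=(1+\abs{\Diff f}^2)^{-1/2}$, so the conclusion $\delta(P,Y_i)>\epsilon_n$ is exactly the bound $\abs{\Diff f}<\sqrt{\epsilon_n^{-2}-1}=:M_n'$. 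Thus a direction uniformly transverse to a family of tangent planes becomes, after rotation, a fibre direction over which the corresponding graphs have uniformly bounded slope.

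The heart of the argument is the case of an open (top-dimensional) cell $C\subseteq\Kb^n$. Apply Lemma~\ref{top dimension} with $\epsilon=\epsilon_n$ to obtain open subcells $A_1,\dots,A_p$ with $\dim(C\setminus\bigcup A_i)<n$, each $\partial A_i$ covered, up to a set of dimension $<n-1$, by at most $2n$ of the $\epsilon_n$-flat hypersurfaces $B_1,\dots,B_k$. Since each $B_j$ is $\epsilon_n$-flat, its tangent spaces all lie within $\epsilon_n$ of a fixed $X_j\in\mathcal G_{n-1}(\K^n)$; feeding $X_1,\dots,X_k$ (padded to $2n$) into Lemma~\ref{epsilon} produces a line $P=P_i$ with $\delta(P,TB_j|_x)>\epsilon_n$ for all $j$ and all $x$. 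Choose $\sigma_i\in O_n(\K)$ taking $P$ to the $x_n$-axis. By the identity above every $\sigma_i(B_j)$ is then the graph over $\K^{n-1}$ of an $M_n'$-function, so each vertical line meets $\bigcup_j\sigma_i(B_j)$ in at most $2n$ points. Consequently a $\Cone$-cell decomposition of the bounded set $\sigma_i(A_i)$ in the rotated coordinates has its top-dimensional cells of the form $(\phi,\psi)_S$, whose floor and ceiling graphs lie in $\partial(\sigma_i A_i)$ and hence are pieces of the $\sigma_i(B_j)$, so $\phi,\psi$ are $M_n'$-functions.

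It remains to turn each base $S\subseteq\K^{n-1}$ into an $M_{n-1}$-cell. Here I would invoke the inductive hypothesis for $n-1$: partition the finite union of the bases into pieces that become $M_{n-1}$-cells under suitable $\rho\in O_{n-1}(\K)$, and extend $\rho$ to $\tilde\rho\in O_n(\K)$ fixing the $x_n$-axis. Such a horizontal rotation preserves $\abs{\Diff(\cdot)}$ of $\phi,\psi$, so after applying $\tilde\rho\sigma_i$ the relevant piece of $A_i$ becomes a genuine $M_n$-cell, with $M_n:=\max(M_{n-1},\lceil M_n'\rceil)\in\mathbb Q_{>0}$ depending only on $n$. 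The leftover set of dimension $<n$, together with every lower-dimensional cell of the original decomposition, is handled by the same idea in lower dimension: a $d$-cell with $d<n$ is, after decomposing into $\epsilon_n$-flat pieces by Lemma~\ref{flat} and rotating an approximating tangent $d$-plane onto $\K^d\times\{0\}$, the graph of an $M_n'$-function over a $d$-dimensional (hence open in $\K^d$) definable base, to which the inductive hypothesis in dimension $d$ applies, the extra coordinates contributing further $M$-function layers.

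The main obstacle I anticipate is the bookkeeping in the open-cell step: verifying that the cell decomposition of $\sigma_i(A_i)$ really draws its floor and ceiling functions only from the controlled hypersurfaces $\sigma_i(B_j)$ (using boundedness of $A_i$ to exclude $\pm\infty$, and the dimension count to discard the $<n-1$ leftover of $\partial A_i$), and that the inductive decomposition of the base can be realized by a single $\tilde\rho$ fixing the fibre direction so that the slope bounds on $\phi,\psi$ survive. The transversality-to-slope dictionary, the uniform rational constant coming from $\epsilon_n$, and the reduction of lower-dimensional cells are then routine.
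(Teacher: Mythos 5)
Your proposal follows essentially the same route as the paper's proof: double induction on $n$ and on $\dim A$, Lemma~\ref{top dimension} together with Lemma~\ref{epsilon} to produce a direction uniformly transverse to the $\epsilon_n$-flat boundary pieces, the transversality-to-slope dictionary giving a rational bound $M_n$ depending only on~$n$, an inductive decomposition of the base realized by rotations fixing the fibre direction, and Lemma~\ref{flat} for the lower-dimensional case. The bookkeeping you flag is exactly what the paper handles by passing to the $(n-1)$-dimensional cells $\mathcal S$ of a decomposition partitioning the $B_j$ (so that each piece really is a graph with the slope bound) and by discarding the set $\BAD$ of fibres meeting $\partial A$ outside $\bigcup_{C\in\mathcal S}C$, which has dimension $<n$ and is absorbed by the induction on dimension.
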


\begin{proof} We will make use of the following fact:

Let $\epsilon\in[0,1]$, $P\in \mathcal G_1(\K^n)$, $X\in \mathcal G_k(\K^n)$ and and $w\in X$ be a unit vector. Suppose $\delta(P,X)>\epsilon$. If $\pi_P(w)\geq 1/2$, where $\pi_P$ is the orthogonal projection onto $P$, then 
\begin{equation*}
|\pi_P(w)-w|\geq|\pi_P(w)-\pi_X(\pi_P(w))|>|\pi_P(w)|\epsilon\geq1/2\epsilon.
\end{equation*}
 If $\pi_P(w)<1/2$, then $|w|\leq|\pi_P(w)|+|\pi_p(w)-w|\leq1/2+|\pi_p(w)-w|$. In either case, we have  
\begin{equation}\label{delta(P,X)}
|\pi_P(w)-w|\geq\frac{1}{2}\epsilon.
\end{equation}

We prove the theorem by induction on $n$; for $n=1$ the theorem is clear. 
We assume  that $n>1$ and that the theorem holds for smaller values of $n$. We also proceed by induction on $d:=\dim(A)$. It's clear for $d=0$; so we assume that $d>0$ 
and the theorem holds for definable bounded subsets $B$ of $\K^{n}$ with $\dim(B)< d$.

Case I: $\dim(A)=n$. In this case $A$ is an open, bounded, definable subset of $\K^n$, so by using the inductive hypothesis and Lemma \ref{top dimension}, we can reduce to the case where there are pairwise disjoint, definable $B_1,\dots,B_k\subset\partial A$ such that $k\leq 2n$, $\dim(\partial A-\bcup_{j=1}^k B_j)<n-1$ and each $B_j$ is an $\epsilon_n$-flat submanifold, where $\epsilon_n$ is as in Lemma~\ref{epsilon}. By Lemma~\ref{epsilon}, there is  a hyperplane $L$ such that for each $B_j$ and all $x\in B_j$, we have $\delta(L^{\perp},T_xB_j)>\epsilon_n$. Take a cell decomposition $\mathcal B$ of $\K^n$, with respect to orthonormal coordinates in the $L$, $L^\perp$ axis, partitioning each $B_j$. Let 
\[
\mathcal S:=\{C\in\mathcal B:\dim(C)=n-1, C\subset\bcup_{j=1}^k B_j\}
\]
and note that $\dim(\partial A\setminus\bcup_{C \in \mathcal S}C)<n-1$. Furthermore, 
\[
\BAD := \{x\in A:\pi_L^{-1}(\pi^{\mathstrut}_L(x)) \cap \partial A \not \subset 
\bcup_{c \in \mathcal S} C\}
\]                                
has dimension smaller than~$n$. %To see this, let $\mathcal D$ be a cell decomposition of $\partial A\setminus\cup_{C\in\mathcal S}C.$ If $x\in A$ and $\pi_L^{-1}(\pi_L(x))\cap\partial A\not\subset\cup_{C\in\mathcal S}C,$ then there exists $y\in \pi_L^{-1}(\pi_L(x))\cap\partial A$ such that $y\not\in\cup_{C\in\mathcal S}C.$ Thus, there is a $D\in\mathcal D$ with $y\in D$, and therefore $x\in\pi_L^{-1}(\pi_L(D)).$ This shows that $BAD\subset\cup_{D\in\mathcal D}\pi_L^{-1}(\pi_L(D));$ but $\dim(D)<n-1$ for each $D\in\mathcal D$.
Let $U_1,\dots,U_l$ be the elements of $\{\pi_L(C):C\in\mathcal S\}$. Then the set 
\[
\{x\in A:x\not\in\pi_L^{-1}(\bcup_{i=1}^l U_i)\}
\]
is contained in~$\BAD$, and therefore has dimension smaller than~$n$. 
%: suppose $x\in A\setminus\pi_L^{-1}(\bcup_{i=1}^lU_i),$ since $A$ is bounded and open $\pi_L^{-1}(\pi_L(x))\cap\partial A\not=\emptyset$;  take $y\in\pi_L^{-1}(\pi_L(x))\cap\partial A$, since $\pi_L(x)\not\in\bcup_{C\in\mathcal S}\pi_L(C)$ we have $y\not\in\bcup_{C\in\mathcal S}C$, thus $x\in BAD$.

By using the inductive hypothesis, we only need to find the required partition
for each of the sets $A\cap\pi_L^{-1}(U_i),$ $i=1,\dots,l$. Fix
$i\in\{1,\dots,l\}$ and let $U:=U_i$, $A':=A\cap\pi_L^{-1}(U)$. 
Take $C\in\mathcal S$ with $\pi_L(C) = U$. 
Then  $C=\Gamma(\phi)$ for a definable $\Cone$-map $\phi:U\to L^\perp$ and for all $x\in C$,
\[
T_xC=\{(v,\Diff\phi(v)):v\in T_{\pi_L(x)}U\} .
\]
Let $v\in T_{\pi_L(x)}U$ be a unit vector; since $\delta(L^\perp,T_xC)>\epsilon_n$ and $|(v,\Diff\phi(v))|=\sqrt{1+|\Diff\phi(v)|^2},$ it follows from equation (\ref{delta(P,X)}) that 
\[
\frac{1}{2}\epsilon_n\leq\frac{1}{\sqrt{1+|\Diff\phi(v)|^2}}|\pi_{L^\perp}((v,\Diff\phi(v)))-(v,\Diff\phi(v))|=\frac{1}{\sqrt{1+|\Diff\phi(v)|^2}}|v|.
\]
Therefore, 
\[
|\Diff\phi(v)|\leq\sqrt{\frac{4}{\epsilon_n^2}-1}.
\]
Let $M_n\in\mathbb{Q}$ be bigger than
$\max\left\{M_{n-1}, \sqrt{\frac{4}{\epsilon_n^2}-1}\right\}.$

We have proved that for each  $C_j\in\mathcal S$ with $\pi_L(C_j)=U$ there is
a definable $\Cone$-map $\phi_j:U\to\K$, such that $|\Diff\phi_j|<M_n$ and 
$C_j = \Gamma(\phi_j)$. 

By the inductive hypothesis, there is a partition $\mathcal P$ of $U$ such that each piece $P\in\mathcal P$ is a $M_{n-1}$-cell after a change of coordinates of $L$. We have 
\[   
A' = \coprod\limits_{\substack{P\in\mathcal P\\(\phi_r,\phi_s)_P\subset A'}} 
(\phi_r,\phi_s)_P,
\]
and $(\phi_r,\phi_s)_P$ is a $M_n$-cell after a coordinate change.

Case II: $\dim(A)<n$. In this case, by Lemma \ref{flat}, we can partition  $A$ into cells which are $\epsilon_n$-flat. Therefore we may assume that  $A$ is an $\epsilon_n$-flat submanifold, where $\epsilon_n$ is as in Lemma \ref{epsilon}. As in case I, there is a hyperplane $L$ such that $A$ is the graph of a  function $f:U\to\K$, $U\subset L$ and  $|\Diff f|<M_n$. By the inductive hypothesis, we can partition $U$ into $M_{n-1}$-cells. The graphs of $f$ over the cells in this partition give the required partition of $A$.
\end{proof}

\begin{dfn}\label{def:e-rect}
Let $A \subseteq \K^n$ and $e \leq n$.
%$C$ is $e$-rectifiable if there exists $A \subseteq \Kb^e$ open, definable $M$-cell
%in the sense of~\cite{pawlucki}, for some $M \in \Nat$,
%and $f: A \to C$ \bilip.
$A$~is \berect with bound~$M$ if, after a permutation of coordinates, 
$A$~is the graph of an $M$-function  $f: U\to \K^{n-e}$, %with finite derivative, 
where $U \subset \K^e$ is an open $M$-cell for some finite~$M$.
\end{dfn}
%Notice that a \berect set is a $e$-rectifiable set.

\begin{lem}\label{lem:rectifiable-cell}
Let $A \subset \Kb^n$ be an $M$-cell of dimension~$e$.
Then, $A$ is a \berect set, and the bound of $A$ can be chosen 
depending only on $M$ and~$n$.
\end{lem}
\begin{proof}
We proceed by induction on~$n$.
If $n = 0$ or $n = 1$ the result is trivial, so assume $n \geq 2$.
By definition, there exists an  $M$-cell $B \subset \Kb^{n-1}$ such that 
\begin{enumerate}
\item[(1)] either $A = \Gamma(g)$ for some $M$-function $g: B \to \Kb$, or
\item[(2)] $A = (g, h)_B$ for some $M$-functions $g, h: B \to \Kb$, with $g < h$.
\end{enumerate}
By inductive hypothesis, there exists an open  $L$-cell $C \subset \K^d$ (for some~$d$ and some
$L\geq M$ depending only on $M$ and on~$n$), and an $L$-function
$f: C \to \K^{n-1-d}$, such that $B = \Gamma(f)$.

In case (1) $d=e$. Define $l : C \to \K^{n-e}$ by $l(x)=\pair{f(x),g(x,f(x))}$. It is easy to see that
$l$ is an $L'$-function for some $L'$ depending only on $M$ and~$n$, %L' is bdd. by M\sqrt{1+M^2}, the dependance on n build up from the 1's in the radical%
and that $A = \Gamma(l)$.

In case (2), $d=e-1$. Define $\tilde g :=  g \circ f$, $\tilde h:= h \circ f$, and
$\tilde B := (\tilde g, \tilde h)_{C}$.
Given $\pair{\x, y} \in \tilde B$, define $l(\x, y) := f(\x)$.
We have that $\tilde B$ is an open $e$-dimensional $L$-cell, $l: \tilde B \to \K^{n - e}$ is an
$L$-function, and $A = \Gamma(l)$.
\end{proof}

\begin{cor}\label{cor:partition}
Let $A \subseteq \K^n$ be definable of dimension at most~$e$. 
Then there is a partition $A=\bcup_{i=0}^kA_i$  such that $\dim(A_0)<e$ and
$A_i$ is a \berect set for $i>0$.
Moreover, the bounds of each $A_i$ can be  chosen to depend only on~$n$ (and
not on~$A$).
We call $(A_0, \dotsc, A_k)$ a \berect partition  of~$A$.
\end{cor}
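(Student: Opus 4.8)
The plan is to read the corollary off from Theorem~\ref{decomposition} together with Lemma~\ref{lem:rectifiable-cell}. First I would apply Theorem~\ref{decomposition} to $A$, which (after reducing to the bounded case $A \subset \Kb^n$, where the theorem applies directly; the unbounded case is analogous, applying the evident extension of the decomposition) yields a finite partition $A = \bcup_{j=1}^s A_j'$ and orthonormal maps $\sigma_j \in O_n(\K)$ such that each $\sigma_j(A_j')$ is an $M_n$-cell, with $M_n \in \Q_{>0}$ depending only on~$n$.

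Next comes the dimension bookkeeping. Because $\dim A \le e$, every piece satisfies $\dim A_j' \le e$. I would put into $A_0$ exactly those $A_j'$ with $\dim A_j' < e$; being a finite union of sets of dimension $< e$, this $A_0$ satisfies $\dim A_0 < e$, as required (if $\dim A < e$ this absorbs everything and there is nothing more to do). Each of the remaining pieces has $\dim \sigma_j(A_j') = e$, so Lemma~\ref{lem:rectifiable-cell} applies to the $M_n$-cell $\sigma_j(A_j')$ and shows it is \berect with a bound depending only on $M_n$ and~$n$, hence only on~$n$. This is exactly what gives the uniform control on the bounds asserted in the statement.

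The one delicate point --- and the step I expect to be the main obstacle --- is to pass from the basic $e$-rectifiability of $\sigma_j(A_j')$ to that of $A_j'$ itself, since Theorem~\ref{decomposition} supplies an arbitrary orthonormal $\sigma_j$ while Definition~\ref{def:e-rect} only allows a permutation of the coordinates. The clean resolution is to take the $\sigma_j$ to be coordinate permutations, which is permitted by the sharper form of the decomposition in~\cite{pawlucki}: then Lemma~\ref{lem:rectifiable-cell} produces a further permutation $\tau_j$ for which $\tau_j\sigma_j(A_j')$ is the graph of an $M$-function over an open $M$-cell, and since $\tau_j\sigma_j$ is again a permutation, $A_j'$ is \berect directly from Definition~\ref{def:e-rect}. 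If instead one wants to rely only on the orthonormal version proved here, the image of a coordinate-plane graph under $\sigma_j^{-1}$ need not be a graph over any coordinate plane, and one must first subdivide $A_j'$ into $\epsilon$-flat pieces and then project each onto the nearest coordinate $e$-subspace, using that some such subspace lies within distance $\sqrt{1 - 1/\binom{n}{e}} < 1$ of every tangent space; for $\epsilon$ small enough this projection is nonsingular with controlled derivative, again producing a \berect piece with bound depending only on~$n$. Everything else is routine: finiteness of the partition, the inequality $\dim A_0 < e$, and the dependence of all bounds on $n$ alone follow immediately once the two results above are in hand.
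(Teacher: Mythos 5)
Your proposal is correct and takes essentially the same route as the paper: the paper's entire proof is the single line ``Apply Theorem~\ref{decomposition} and~\ref{lem:rectifiable-cell}.'' The ``delicate point'' you isolate --- that Theorem~\ref{decomposition} produces an arbitrary $\sigma_i \in O_n(\K)$ while Definition~\ref{def:e-rect} only permits a permutation of coordinates --- is a genuine mismatch that the paper's one-line proof silently elides, and both of your proposed repairs are sound: the paper itself remarks at the start of \S\ref{sec:partition} that by \cite{pawlucki} a permutation of coordinates suffices in Theorem~\ref{decomposition}, and your alternative (subdividing into $\epsilon$-flat pieces and projecting onto a coordinate $e$-subspace within distance $\sqrt{1-1/\binom{n}{e}}<1$ of every tangent space) is exactly the kind of argument needed if one insists on the orthonormal version. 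Your dimension bookkeeping and the uniformity of the bounds in $n$ match the intended argument.
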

\begin{proof}
Apply Theorem~\ref{decomposition} and~\ref{lem:rectifiable-cell}.
\end{proof}

Notice that a similar result has also been proved in \cite[Theorem~2.3]{PW} (where they also take arbitrarily small bounds): 
however, in \cite{PW} they don't require that the 
%domain of the  
functions parametrizing the set $A$ are injective (which is essential for our
later uses).

% \begin{proof} By Lemmas~\ref{flat} and~\ref{bdddiff}, there is a finite
% partition of $A$ into definable sets such that every $e$-dimensional piece is,
% after a permutation of the coordinates, the  graph of a function
% $f:U\to\K^{n-e}$ with finite derivative, where $U\subset\K^e$. After a change
% of coordinates, $U$ is the union of disjoint $M_e$-cells; the graph of such
% functions $f$ over these $M_n$ cells gives a partition of $A$ into \berect
% sets or set of dimension smaller than~$e$. 
% \end{proof}

\section{Whitney decomposition}\label{sec:whitney}

The fact that the functions that define an $M$-cell are actually Lipschitz
function follows from the following property of $M$-cells: 
\begin{sentence}
Every pair of
points $x,y$ in an $M$-cell $C\subset \K^n$ can be connected by a definable
$\Cone$ curve $\gamma:[0,1]\to C$ with $|\gamma'(t)| < N |x-y|$, where $N$ is a
constant depending only on $M$ and $n$ which is finite if $M$ is (Lemma
\ref{bdd.derivative} or \cite{evr} 3.10 \& 3.11). 
\end{sentence}
The same property implies
that a $N$-function $f$ on an $M$-cell is Lipschitz where the Lipschitz
constant is finite if $M$ and $N$ are (Corollary  \ref{lipschitz}). This last
property will be needed for defining Hausdorff measure. 

\begin{rem}
Let $U \subset \Kb^n$ be open and definable, and $f: U \to \Kb$ be an
$M$-function (for some finite~$M$).
It is not true in general that $f$ is $L$-Lipschitz for some finite~$L$:
this is the reason why we needed to prove Theorem~\ref{decomposition}.
\end{rem}

\begin{dfn}
Let $A\subset\K^n$, $B\subset\K^m$ be definable sets. Let
$\lambda\subset A\times([0,1]\times B)\subset\K^n\times
\K^{1+m}$ be a definable set such that for every $x\in A$, the fiber over $x$ 
\[
\lambda_x:=\{y\in[0,1]\times B: \pair{x,y}\in\lambda\}
\]
is a curve  $\lambda_x:[0,1]\to B$. We view $\lambda$ as describing the family of curves $\{\lambda_x\}_{x\in A}$. Such a family is a definable family of curves (in $B$, parametrized by $A$). 
\end{dfn}

An $L$-cell is an $L$-Lipschitz cell if the functions that define the $L$-cell are $L$-Lipschitz.

\begin{lem}\label{bdd.derivative}Fix $L\in\K_{>0}$ and $n\in\Nat_{>0}$. Then, there is a constant $K(n,L)\in\K_{>0}$ depending only on $n$ and $L$, that is finite if $L$ is, such that for every $L$-Lipschitz cell $C\subset\K^n$ there is  a definable family of curves $\gamma\subset C^2\times([0,1]\times C)$ such that: For all  $x,y\in C$, $\gamma_{x,y}:[0,1]\to C$ is a $\Cone$-curve  with
\begin{itemize}
\item[\rm{(i)}]  $\gamma_{xy}(0)=x,\gamma_{xy}(1)=y$; 
\item[\rm{(ii)}] $|\gamma_{xy}'(t)|\leq K(n,L)|x-y|$, for all $t\in[0,1]$.
\end{itemize}
\end{lem}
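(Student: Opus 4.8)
The plan is to induct on $n$, following the recursive cell structure of $C$ from Definition~\ref{M-cell}. For $n=1$ an $L$-Lipschitz cell is a point or an open interval, and the straight line $\gamma_{xy}(t)=(1-t)x+ty$ connects $x$ to $y$ with $|\gamma_{xy}'(t)|=|x-y|$, so $K(1,L)=1$ works and the family is trivially definable. For the inductive step, write $C$ over an $L$-Lipschitz cell $B\subset\K^{n-1}$ as either the graph $\Gamma(g)$ of an $L$-Lipschitz $\Cone$-function $g\colon B\to\K$, or a band $(g,h)_B$ with $g<h$ both $L$-Lipschitz and $\Cone$; in either case $B$ is itself an $L$-Lipschitz cell, so the inductive hypothesis yields a definable family of $\Cone$-curves $\beta=\beta_{x'y'}\colon[0,1]\to B$ with $\beta(0)=x'$, $\beta(1)=y'$ and $|\beta'(t)|\le K(n-1,L)\,|x'-y'|$, where $x',y'$ are the projections of $x,y$ to $\K^{n-1}$.

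In the graph case put $\gamma_{xy}(t)=(\beta(t),g(\beta(t)))$; since $g$ is $L$-Lipschitz we have $|\Diff g|\le L$ along $\beta$, whence $|\gamma_{xy}'(t)|\le\sqrt{1+L^2}\,|\beta'(t)|\le\sqrt{1+L^2}\,K(n-1,L)\,|x-y|$, and it suffices to take $K(n,L)\ge\sqrt{1+L^2}\,K(n-1,L)$. The band case carries the real content. Writing $G:=g\circ\beta$ and $H:=h\circ\beta$, these are $\Cone$-functions on $[0,1]$ with $G<H$, and using $|\Diff g|,|\Diff h|\le L$ we get $|G'|,|H'|\le S$ with $S:=L\,K(n-1,L)\,|x'-y'|\le L\,K(n-1,L)\,|x-y|$. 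Everything then reduces to producing a $\Cone$ ``vertical'' function $\zeta\colon[0,1]\to\K$, definable uniformly in $(x,y)$, with $G(t)<\zeta(t)<H(t)$, $\zeta(0)=x_n$, $\zeta(1)=y_n$, and $|\zeta'|\le c(L)\bigl(S+|x_n-y_n|\bigr)$ (here $x_n,y_n$ are the last coordinates of $x,y$); for then $\gamma_{xy}=(\beta,\zeta)$ lies in $C$, and since both $S$ and $|x_n-y_n|$ are bounded by a constant times $|x-y|$, the speed bound follows with a suitable $K(n,L)$.

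The hard part is bounding the vertical speed when the band is thin. The naive ``relative height'' choice $\zeta=G+\theta(t)(H-G)$ with $\theta$ linear from $\tfrac{x_n-G(0)}{H(0)-G(0)}$ to $\tfrac{y_n-G(1)}{H(1)-G(1)}$ does meet the endpoint conditions and stays inside the band, but a computation shows its speed contains a term of order $|x-y|^2/W(x')$, with $W:=H-G$ the width, which is unbounded when the fiber over $x'$ is far thinner than $|x-y|$. My plan is to track a boundary instead of the relative position. The interpolation measured from the floor, $\zeta_{\mathrm{fl}}:=G+\Psi$ with $\Psi(t):=(1-t)\bigl(x_n-G(0)\bigr)+t\bigl(y_n-G(1)\bigr)$, automatically satisfies $\zeta_{\mathrm{fl}}>G$ (a convex combination of positive numbers) and has slope $\le|x_n-y_n|+S$; and since $|W'|\le 2S$ forces $W(t)\ge\max\bigl(W(0)-2St,\ W(1)-2S(1-t)\bigr)$, one checks $\zeta_{\mathrm{fl}}-H< 4S\min(t,1-t)\le 2S$, i.e.\ it overshoots the ceiling by at most $2S$. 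Symmetrically, the interpolation measured from the ceiling stays below $H$ and drops below the floor by at most $2S$, with the same slope bound.

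It remains to merge these two capped candidates into a single $\Cone$ path that lies strictly inside the open band while keeping the slope bound and the exact endpoint values. Because each overshoot is only $2S$ and the ambient structure is an o-minimal field, I would do this with a definable (e.g.\ semialgebraic) smoothing of $\min$ and $\max$, such as $\operatorname{softmin}(a,b)=\tfrac12\bigl(a+b-\sqrt{(a-b)^2+\eta^2}\bigr)$, capping $\zeta_{\mathrm{fl}}$ below the ceiling and lifting it above the floor; this alters the path by $O(S)$ and the slope by a bounded factor, and $\eta$ can be chosen (definably in $x,y$) small enough that the endpoints are preserved and $\zeta$ stays strictly between $G$ and $H$. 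Verifying that this smoothing simultaneously fixes the endpoints, keeps $\zeta$ inside the band, and respects the slope bound is the delicate technical point. All constants produced are polynomial in $L$ and $K(n-1,L)$, hence finite whenever $L$ is and dependent only on $n$ and $L$; and since $\beta$ comes from a definable family and $\zeta$ is given by formulas uniform in $(x,y,t)$, the resulting $\gamma\subset C^2\times([0,1]\times C)$ is a definable family of curves, completing the induction.
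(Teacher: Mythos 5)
Your skeleton (induction on $n$, lifting the family from the base cell via the graph map, and a linear ``vertical'' interpolation in the band case) is the same as the paper's, and the base and graph cases are fine. But the band case, which you correctly identify as carrying all the content, has a genuine gap, and the detour that creates it rests on a false premise. The paper's proof uses exactly the ``relative height'' path you discard: with $u,v\in(0,1)$ defined by $\pi(x)=uh(\alpha(0))+(1-u)g(\alpha(0))$, $\pi(y)=vh(\alpha(1))+(1-v)g(\alpha(1))$ and $l(t)=tv+(1-t)u$, it sets $\gamma_{xy}(t)=\bigl(\alpha(t),\,l(t)h(\alpha(t))+(1-l(t))g(\alpha(t))\bigr)$. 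Your claim that this path's speed contains an uncontrollable term of order $|x-y|^2/W$ comes from estimating $v-u$ by $O(|x-y|)/W$ and then multiplying back by $W(t)$; that estimate is simply not tight. Writing $W:=h-g$, $a_0:=\pi(x)-g(\alpha(0))=uW(\alpha(0))$ and $a_1:=\pi(y)-g(\alpha(1))=vW(\alpha(1))$, one has the exact identity
\[
(v-u)\,W(\alpha(t)) \;=\; (a_1-a_0)\;+\;v\bigl(W(\alpha(t))-W(\alpha(1))\bigr)\;-\;u\bigl(W(\alpha(t))-W(\alpha(0))\bigr),
\]
and each summand is bounded by a constant times $|x-y|$ (using $u,v\in[0,1]$, $|a_1-a_0|\le|\pi(y)-\pi(x)|+L|\alpha(1)-\alpha(0)|$, and the $2L$-Lipschitzness of $W$ along $\alpha$, whose speed is at most $K(n-1,L)|x-y|$). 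No division by the width ever occurs, so thin fibers cause no trouble; this is precisely the computation in the paper.

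Your replacement construction is not carried to completion and, as described, does not work: $\operatorname{softmin}(a,b)=\tfrac12\bigl(a+b-\sqrt{(a-b)^2+\eta^2}\bigr)$ is strictly smaller than $\min(a,b)$ for every $\eta>0$, so capping $\zeta_{\mathrm{fl}}$ below the ceiling necessarily perturbs the endpoint values $x_n$ and $y_n$ (by roughly $\eta^2/\bigl(4(H(0)-x_n)\bigr)$ at $t=0$, which can be large when $x$ is close to the ceiling), and no single choice of $\eta$ restores them; moreover, where the band is thinner than the $O(S)$ overshoot, the ``cap below $H$'' and ``lift above $G$'' operations conflict and it is unclear the merged path stays in the open band at all. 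Since you explicitly flag this merging step as unverified, the proof is incomplete exactly there. The repair is to drop the detour and prove the bound for the relative-height path via the identity above.
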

\begin{proof}
By induction on $n$. For $n=1$ the lemma is clear. Take $n\geq1$, and assume
that the lemma holds for  $n$. Let $C\subset\K^{n+1}$ be an $L$-Lipschitz
cell. Then $C=\Gamma(f)$ or $C=(g,h)_X$ for some $L$-Lipschitz cell
$X\subset\K^{n-1}$ and definable, $\Cone$, $L$-Lipschitz functions $f,g,h$ with
$g<h$, and $|\Diff f|,|\Diff g|,|\Diff h|\leq L$. By induction, there are a
constant $k:=K(n-1,L)$ and a definable family of $\Cone$-curves $\beta$ in $X$
with the required properties.  Let $\pi_n:\K^{n+1}\to\K^n$ be the projection
onto the first $n$ coordinates. 

If $C=\Gamma(f)$, we lift $\beta$ to $C$ via $f$: fix $x,y\in C$ and let $\gamma_{x,y}(t):=(\alpha(t),f(\alpha(t)))$, where for all $t\in[0,1]$ $\alpha(t):=\beta_{\pi_n(x),\pi_n(y)}(t).$  Then we have   			
$
|\gamma_{xy}'(t)|\leq%|\alpha'(t)|+|\Diff f(\alpha'(t))|\leq(1+L)|\alpha'(t)|\leq(1+L)k|\pi_n(x)-\pi_n(y)| \leq
(1+L)k|x-y|.
$

If $C=(g,h)_X$, we lift $\beta$ as follows: Fix $x,y\in C$ and let $\alpha:=\beta_{\pi_n(x),\pi_n(y)}$. Let $\pi:\K^{n+1}\to\K$ be the projection onto the last coordinate and take $u,v\in(0,1)$  with 
\begin{align*}\pi(x)&=uh(\alpha(0))+(1-u)g(\alpha(0))\\
		      \pi(y)&=vh(\alpha(1))+(1-v)g(\alpha(1)).
\end{align*}
Let $l(t):=tv+(1-t)u$, for $t\in[0,1]$. We define
$\gamma_{x,y}(t):=(\alpha(t),l(t)h(\alpha(t))+(1-l(t))g(\alpha(t))),$
and note that 
\[
|\gamma_{xy}'(t)|%&\leq|\alpha'(t)|+|l'(t)h(\alpha(t))+l(t)\Diff h(\alpha'(t))-l'(t)g(\alpha(t))+(1-l(t))\Diff g                 	(\alpha'(t))|\\
\leq k|x-y|+|(v-u)(h(\alpha(t))-g(\alpha(t)))|+2Lk|x-y|,
\]
since $l(t), 1-l(t)$ are between 0 and 1 and $|\Diff h(\alpha'(t))|,|\Diff g(\alpha'(t))|\leq L|\alpha'(t)|$.
Let $f := h-g$. We want to bound $|(v-u)f(\alpha(t))|$, which equals
\[
|\pi y - \pi x - v(f(\alpha(1)) - f(\alpha(t))) 
+ u(f(\alpha(0)) - f(\alpha(t))) + g(\alpha(0)) - g(\alpha(1))|.
\]
But
\[
|f(\alpha(1))-f(\alpha(t))| \leq L|\alpha(1) - \alpha(t)| = 
L\abs{1-t}\left|\frac{\alpha(1)-\alpha(t)}{1-t}\right|\leq
L|\alpha'(t_0)|
\]
for some $t_0$ between $t$ and 1. Similarly, $|f(\alpha(0))-f(\alpha(t))|\leq L|\alpha'(t_1)|,$ for some $t_1$ between $t$ and $1$.
Since $u,v\in[0,1]$, we get 
\[
|(v-u)f(\alpha(t))|\leq|\pi y-\pi x|+2Lk|x-y|+L|x-y|;
\]
thus $|\gamma_{xy}'(t)|\leq K(n,L)|x-y|$ for some constant $K(n,L)$ depending only on $n$ and $L$ which is finite if $L$ is. The collection of the curves $\gamma_{xy}$ for $x,y\in C$ constitutes the required family of curves. 
\end{proof}

\begin{thm}\label{K are tame}
Let  $L>0$, and let $C\subset\K^n$ be an $L$-cell. Then $C$ is a
$k(n,L)$-Lipschitz cell, where $k(n,L)$ depends only on $n$ and~$L$, 
and is finite if $L$ is.
\end{thm}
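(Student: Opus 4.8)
The plan is to prove the statement by induction on~$n$, using Lemma~\ref{bdd.derivative} to convert the uniform bound on $\Diff f$ furnished by the $L$-cell hypothesis into a genuine Lipschitz bound on the defining functions. The base case $n=1$ is immediate, since an $L$-cell in $\K$ is a point or an interval and carries no defining functions to control.

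For the inductive step I would write an $L$-cell $C\subset\K^n$ in one of its two standard forms, $C=\Gamma(f)$ or $C=(g,h)_X$, where $X\subset\K^{n-1}$ is an $L$-cell and $f,g,h\colon X\to\K$ are $L$-functions, so that $|\Diff f|,|\Diff g|,|\Diff h|\leq L$. By the inductive hypothesis, $X$ is a $k(n-1,L)$-Lipschitz cell; hence Lemma~\ref{bdd.derivative}, applied with Lipschitz constant $k(n-1,L)$, yields a definable family of $\Cone$-curves $\gamma_{x,y}\colon[0,1]\to X$ with $\gamma_{x,y}(0)=x$, $\gamma_{x,y}(1)=y$, and $|\gamma_{x,y}'(t)|\leq K(n-1,k(n-1,L))\,|x-y|$ for all $x,y\in X$ and all $t\in[0,1]$.

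The key computation is then the fundamental theorem of calculus along these curves. For $x,y\in X$ and any top-level defining function, say $f$,
\[
|f(x)-f(y)|=\left|\int_0^1 \Diff f(\gamma_{x,y}(t))\,\gamma_{x,y}'(t)\de t\right|
\leq \int_0^1 |\Diff f(\gamma_{x,y}(t))|\,|\gamma_{x,y}'(t)|\de t
\leq L\,K(n-1,k(n-1,L))\,|x-y|,
\]
and the same bound holds for $g$ and~$h$. Thus every defining function of~$C$ at the top level is $L\,K(n-1,k(n-1,L))$-Lipschitz, while the functions inherited from~$X$ are $k(n-1,L)$-Lipschitz by the inductive hypothesis. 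Setting
\[
k(n,L):=\max\bigl\{k(n-1,L),\,L\cdot K(n-1,k(n-1,L))\bigr\}
\]
makes all defining functions of~$C$ be $k(n,L)$-Lipschitz; moreover $k(n,L)\geq L$ (as $K\geq1$), so $C$ is in particular a $k(n,L)$-cell, and hence a $k(n,L)$-Lipschitz cell. Since $K(n-1,\cdot)$ preserves finiteness by Lemma~\ref{bdd.derivative} and $k(n-1,L)$ is finite by induction whenever $L$ is, both the product and the maximum stay finite, giving the finiteness clause.

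I expect the only point requiring care to be a bookkeeping one: checking that the curves produced by Lemma~\ref{bdd.derivative} remain inside~$X$, so that $\Diff f$ is defined and bounded by~$L$ all along $\gamma_{x,y}$, and that the constants can be chosen uniformly in~$C$ (depending only on $n$ and~$L$). Both are guaranteed directly by the statement of Lemma~\ref{bdd.derivative}, so no genuine obstacle arises; the substance of the argument is entirely the integration step displayed above.
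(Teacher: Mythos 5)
Your proof follows the paper's argument essentially verbatim: induction on $n$, the inductive hypothesis making $X$ a $k(n-1,L)$-Lipschitz cell, Lemma~\ref{bdd.derivative} supplying curves with $|\gamma_{x,y}'(t)|\leq K(n-1,k(n-1,L))\,|x-y|$, and the bound $|\Diff f|\leq L$ along the curve giving the Lipschitz estimate; taking the maximum with $k(n-1,L)$ at the end is a harmless (slightly more careful) variant of the paper's choice $k(n,L):=L\,K(n-1,k(n-1,L))$. The one step you should restate is the ``fundamental theorem of calculus'' display: in a general (possibly non-archimedean) o-minimal field $\K$ the integral $\int_0^1\Diff f(\gamma_{x,y}(t))\,\gamma_{x,y}'(t)\de t$ is not defined, so the paper instead applies the o-minimal mean value theorem to $f\circ\gamma_{x,y}$, obtaining $|f(x)-f(y)|=|(f\circ\gamma_{x,y})'(t_0)|=|\Diff f(\gamma_{x,y}(t_0))\,\gamma_{x,y}'(t_0)|\leq L\,K(n-1,k(n-1,L))\,|x-y|$ for some $t_0\in(0,1)$ --- the same bound, reached by a step that is actually available in $\K$.
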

\begin{proof}By induction on $n$; the theorem is clear for $n=1$. Assume that $n>1$ and that the theorem holds for  $n-1$.  Then $C=\Gamma(f)$ or $C=(g,h)_X$,  where $X\subset\K^{n-1}$ is a $k(n-1,L)$-Lipschitz cell and $f,g,h$ are $\Cone$-functions on $X$ such that  $|\Diff f|,|\Diff g|,|\Diff h|\leq L$. We need to show that $f,g,h$ are Lipschitz.

Since $X$ is a $k$-Lipschitz cell, $k:=k(n-1,L)$,  it follows from Lemma \ref{bdd.derivative} that there is a constant $K(n-1,k)$  such that whenever $x,y\in X$, there is a definable, $\Cone$-curve $\gamma$ joining $x$ and $y$ with 
$|\gamma'(t)|\leq K(n-1,k)|x-y|$ for all  $t\in[0,1]$. Let $g:=f\circ\gamma$, and let $t_0\in(0,1)$ be such that
\[
|f(x)-f(y)|=|g'(t_0)|=|\Diff f(\gamma'(t_0))|\leq L|\gamma'(t_0)|\leq
LK(n-1,k)|x-y|.
\]
Thus $f$ is $LK(n-1,k)$-Lipschitz. We set $k(n,L):=LK(n-1,k)$.
\end{proof}

\begin{cor}\label{lipschitz} Let $C$ be an $M$-cell and $f$ be a definable
$M$-function. Then $f$ is Lipschitz, and  with finite Lipschitz constant if
$M$ is finite. 
\end{cor}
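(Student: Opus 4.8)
The plan is to reduce the corollary to the two results just established, by connecting points of $C$ through the controlled curves and bounding $f$ along each curve. First I would invoke Theorem~\ref{K are tame} to upgrade the $M$-cell $C\subset\K^n$ to a $k(n,M)$-Lipschitz cell, with $k(n,M)$ finite whenever $M$ is. With $C$ now known to be $k(n,M)$-Lipschitz, I would apply Lemma~\ref{bdd.derivative} (taking $L := k(n,M)$) to obtain a constant $K := K(n,k(n,M))$ — again finite when $M$ is, since $k(n,M)$ then is — together with a definable family of $\Cone$-curves $\gamma_{x,y}:[0,1]\to C$ satisfying $\gamma_{x,y}(0)=x$, $\gamma_{x,y}(1)=y$, and $|\gamma_{x,y}'(t)|\le K|x-y|$ for all $t\in[0,1]$.

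Next, I would fix $x,y\in C$ and set $\phi := f\comp\gamma_{x,y}:[0,1]\to\K^{m}$. By the chain rule and the hypothesis $\abs{\Diff f}\le M$ we get $|\phi'(t)| = |\Diff f(\gamma_{x,y}'(t))| \le M\,|\gamma_{x,y}'(t)| \le MK\,|x-y|$ for every $t\in[0,1]$. Exactly as in the proof of Theorem~\ref{K are tame}, the mean value theorem for definable functions then gives $|f(x)-f(y)| = |\phi(1)-\phi(0)| \le \sup_{t\in[0,1]}|\phi'(t)| \le MK\,|x-y|$. Since $x,y$ were arbitrary, $f$ is $MK$-Lipschitz, with Lipschitz constant $MK = M\,K(n,k(n,M))$ depending only on $M$ and $n$ and finite precisely when $M$ is, which is the assertion.

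I expect no serious obstacle: the substance is already packaged in Theorem~\ref{K are tame} and Lemma~\ref{bdd.derivative}, and the remaining work is the same mean-value estimate used to prove the former. The one point deserving a little care is passing from the scalar mean value theorem to a bound on the norm of the vector-valued increment $\phi(1)-\phi(0)$; I would handle this either componentwise or, more cleanly, by writing $\phi(1)-\phi(0)=\int_0^1\phi'(t)\de t$ and estimating $|\phi(1)-\phi(0)|\le\int_0^1|\phi'(t)|\de t\le\sup_{t}|\phi'(t)|$, which avoids selecting a single evaluation point $t_0$ for a map into $\K^{m}$.
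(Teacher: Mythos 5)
Your proposal is correct and follows the same route as the paper: the paper's proof likewise invokes Theorem~\ref{K are tame} to see that $C$ is a Lipschitz cell, obtains the controlled family of curves from Lemma~\ref{bdd.derivative}, and concludes by the mean value theorem applied to $f$ along those curves. Your extra remark about handling the vector-valued case via $\int_0^1|\phi'|$ is a sensible precaution but does not change the argument.
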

\begin{proof} By Theorem \ref{K are tame}, $C$ has a definable family of curves as in Lemma \ref{bdd.derivative}. The result therefore follows from the mean value theorem.
\end{proof}
 
\begin{dfn} A definable set $A\subset\K^n$ satisfies the Whitney arc property if there is a constant $K\in\Kb_{>0}$ such that for all $x,y\in A$ there is a definable curve $\gamma:[0,1]\to A$ with $\gamma(0)=x$, $\gamma(1)=y$ and $\length(\gamma):=\int_0^1|\gamma'|\leq K|x-y|$.
\end{dfn}

\begin{lem}\label{M-WAP}
Let $C \subset \Kb^n$ be an $M$-cell, $M\in\Kb$. Then, $C$ satisfies the Whitney arc property.
\end{lem}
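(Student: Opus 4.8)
The plan is to combine the two main results of this section, Theorem~\ref{K are tame} and Lemma~\ref{bdd.derivative}, and simply read off the length bound from the pointwise derivative bound. Since $M \in \Kb$ is finite, Theorem~\ref{K are tame} tells us that $C$ is in fact an $L$-Lipschitz cell with $L := k(n, M)$, and the finiteness clause there guarantees $L \in \Kb$. This converts the hypothesis on the derivatives of the functions defining the cell into a genuine (finite) Lipschitz bound, which is exactly the input required by Lemma~\ref{bdd.derivative}.

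Next I would apply Lemma~\ref{bdd.derivative} to the $L$-Lipschitz cell $C$ to obtain a definable family of $\Cone$-curves $\gamma_{x,y} : [0,1] \to C$ with $\gamma_{x,y}(0) = x$, $\gamma_{x,y}(1) = y$, and $|\gamma'_{x,y}(t)| \leq K(n, L)\,|x-y|$ for all $t \in [0,1]$, where again the constant $K(n,L)$ is finite precisely because $L$ is. Setting $K := K(n, L) \in \Kb_{>0}$, the length of each arc is then bounded by integrating the pointwise estimate:
\[
\length(\gamma_{x,y}) = \int_0^1 |\gamma'_{x,y}(t)| \de t
\leq \int_0^1 K\,|x-y| \de t = K\,|x-y|.
\]
This is exactly the Whitney arc property with constant $K$, so the argument concludes.

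There is no real obstacle here: the statement is a direct corollary of the preceding development, with the two ``finite if $\cdots$ is'' clauses doing all the work of ensuring that $K$ lands in $\Kb_{>0}$ rather than merely being dominated by some infinite element. The only point worth verifying is the bookkeeping---that the curve supplied by Lemma~\ref{bdd.derivative} already takes values in $C$ and joins the prescribed endpoints, which it does by construction---so that no further modification of $\gamma_{x,y}$ is needed and the family can be taken verbatim.
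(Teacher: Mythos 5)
Your argument is correct and is exactly the paper's own proof (which simply cites Theorem~\ref{K are tame} and Lemma~\ref{bdd.derivative}), just with the routine integration step and the finiteness bookkeeping written out explicitly.
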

\begin{proof} It follows from Theorem \ref{K are tame} and Lemma \ref{bdd.derivative}.
\end{proof}

\begin{thm}
Let $A \subset \Kb^n$ be definable.
Then, $A$ can be partitioned into finitely many definable sets, each of them
satisfying the Whitney arc property.
\end{thm}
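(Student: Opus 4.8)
The plan is to combine the cell partition of Theorem~\ref{decomposition} with the Whitney arc property for $M$-cells established in Lemma~\ref{M-WAP}, the only extra ingredient being that the Whitney arc property is invariant under orthonormal changes of coordinates.

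First I would apply Theorem~\ref{decomposition} to obtain a finite partition $A = \bcup_{i=1}^s A_i$ into definable sets together with maps $\sigma_i \in O_n(\K)$ such that each $\sigma_i(A_i)$ is an $M_n$-cell, where $M_n \in \Q_{>0}$ depends only on~$n$. Since $M_n$ is rational it is finite, so Lemma~\ref{M-WAP} applies and each $\sigma_i(A_i)$ satisfies the Whitney arc property, say with constant $K_i \in \Kb_{>0}$.

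It then remains to transport the Whitney arc property back across $\sigma_i$, and for this I would use that $\sigma_i$ is an isometry. For $x, y \in A_i$ we have $\abs{\sigma_i(x) - \sigma_i(y)} = \abs{x - y}$; and given a definable curve $\eta: [0,1] \to \sigma_i(A_i)$ joining $\sigma_i(x)$ to $\sigma_i(y)$ with $\length(\eta) \leq K_i \abs{\sigma_i(x)-\sigma_i(y)}$, the curve $\gamma := \sigma_i^{-1} \comp \eta$ joins $x$ to $y$ inside $A_i$ and satisfies $\abs{\gamma'(t)} = \abs{\eta'(t)}$ for all $t$, since $\sigma_i^{-1} \in O_n(\K)$ preserves the norm. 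Hence $\length(\gamma) = \length(\eta) \leq K_i \abs{x - y}$, so each $A_i$ satisfies the Whitney arc property with the same constant $K_i$, and $(A_1, \dots, A_s)$ is the required partition. The argument is a bookkeeping assembly of the earlier results, so I do not expect a genuine obstacle; the one point requiring care is precisely this invariance under $O_n(\K)$, which reduces to the fact that orthonormal maps preserve both distances between points and norms of velocity vectors, hence arc lengths.
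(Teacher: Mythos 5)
Your proposal is correct and follows exactly the paper's own argument: Theorem~\ref{decomposition} to reduce to $M_n$-cells, Lemma~\ref{M-WAP} for those cells, and invariance of the Whitney arc property under orthonormal changes of coordinates (which you spell out in more detail than the paper, correctly). No issues.
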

\begin{proof} This follows from Lemma \ref{M-WAP}, Theorem \ref{decomposition} and the fact that the Whitney arc property is invariant under an orthonormal change of coordinates.
\end{proof}

%----------------------

\section{Hausdorff measure}

For an introduction to geometric measure theory, and in particular to
the Hausdorff measure, see \cite{morgan}.

\begin{dfn}
Let $U \subseteq \K^n$ be open and  let $f: U \to \Kb^m$ be a
definable function. If $a \in U$,  $e \leq n$ and 
% such that $f$ is differentiable at~$a$,
 $M$ is the set of the $e \times e$ minors of $\Diff f(a)$
we define 
\[
\Je f(a) = \begin{cases} + \infty  & \text{if  $f$ is not differentiable
  at~$a$ or $\rank(\Diff f(a))> e$},\\
\sqrt{\sum_{m \in M} m^2} & \text{otherwise;}
\end{cases}\]
(\cf~\cite[\S 3.6]{morgan}).
\end{dfn}
Notice that if $e = n = m$, then $\Jac_n f = \abs{\det(\Diff f)}$.

\begin{dfn}\label{def:area-basic}
Let $U \subseteq \Kb^e$ be an open $M$-cell for some $M \in \Nat$, and let
$f: U \to \Kb^m$ be a definable function with  finite derivative.
Let $F: U \to \Kb^{m+e}$ be $F(x) := \pair{x, f(x)}$ and
$C := \Gamma(f) = F(U)$ (notice that $C$ has bounded diameter).
We define
\[
\HeK(C) := \int_U \Jac_e F \de \LeK.
\]
\end{dfn}

\begin{lem}\label{lem:Hausdorff-standard}
If $C \subseteq \Kb^n$ is \berect, then $\Haus^e(C) = \He_\Real(\st(C))$,
where $\He_\Real$ is the $e$-dimensional Hausdorff measure on $\Real^n$.
\end{lem}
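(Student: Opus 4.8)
The plan is to reduce both sides to the same integral over a real base and then match integrands. Write $C=\Gamma(f)$, where after a permutation of coordinates $f\colon U\to\Kb^{m}$ is an $M$-function ($m=n-e$, $M$ finite) on an open $M$-cell $U\subseteq\Kb^{e}$, and put $F(x):=\pair{x,f(x)}$, so that by Definition~\ref{def:area-basic} we have $\Haus^{e}(C)=\int_{U}\Je F\de\LeK$. Since $f$ has finite derivative, $\Diff F=\left(\begin{smallmatrix}I_{e}\\ \Diff f\end{smallmatrix}\right)$ has finite entries, so every $e\times e$ minor of $\Diff F$ is finite and $\Je F\colon U\to\Kb$ is finite-valued.

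First I would apply Lemma~\ref{lem:int-Stan} to the (finitely many) components of $f$, obtaining an $\Real_K$-definable $\Cone$ function $\cl f\colon C'\to\Real^{m}$ on an open $C'\subseteq\st(U)$, together with the $\Leb^{e}_\Real$-negligible set $E$, so that $\st(f(x))=\cl f(\st x)$ and $\Diff(\cl f)(\st x)=\st(\Diff f(x))$ hold off $\st^{-1}(E)$. I then set $\bar F(y):=\pair{y,\cl f(y)}$, an injective $\Cone$ parametrization of $\Gamma(\cl f)=\bar F(C')$.

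Next I would show $\He_\Real(\st(C))=\He_\Real(\Gamma(\cl f))$. The inclusion $\Gamma(\cl f)\subseteq\st(C)$ is immediate from $\st(f(x))=\cl f(\st x)$. For the reverse inclusion the key point is that $f$ is genuinely Lipschitz: as $U$ is an $M$-cell and $f$ an $M$-function with $M$ finite, Corollary~\ref{lipschitz} supplies a finite Lipschitz constant $L$. Hence if $x,x'\in U$ lie in the same monad then $\abs{f(x)-f(x')}\le L\abs{x-x'}\approx0$, so $\st(C)$ has singleton fibres over $\st(U)$ and $\dim\st(C)=\dim\st(U)\le e$. Because every point of $\st(C)$ lying over $C'$ belongs to $\Gamma(\cl f)$, the discrepancy $\st(C)\setminus\Gamma(\cl f)$ projects into $\st(U)\setminus C'\subseteq E$, an $\Real_K$-definable set of dimension $<e$; by the singleton-fibre property $\st(C)\setminus\Gamma(\cl f)$ then has dimension $<e$ as well, hence is $\He_\Real$-negligible. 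This is the step I expect to be the main obstacle, as it is where one must rule out ``vertical walls'' in $\st(C)$ and pass from $\Leb^{e}_\Real$-negligibility in the base to $\He_\Real$-negligibility upstairs.

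Finally I would match the two sides. On the real side, the area formula for the injective $\Cone$ map $\bar F$ (\cf~\cite[\S 3.6]{morgan}) gives $\He_\Real(\Gamma(\cl f))=\int_{C'}\Je\bar F\de\Leb^{e}_\Real$. On the o-minimal side, part iv) of Lemma~\ref{lem:int-Stan} applied to the scalar function $\Je F$ gives $\Haus^{e}(C)=\int_{U}\Je F\de\LeK=\int_{C'}\cl{(\Je F)}\de\Leb^{e}_\Real$ (shrinking $C'$ by a negligible set so that it serves as the base for $\Je F$ too). It then remains to identify the integrands almost everywhere: off a negligible set $\cl{(\Je F)}(\st x)=\st(\Je F(x))$, and since $\Je F(x)=\sqrt{\sum_{\mu}\mu^{2}}$ is a continuous function of the finite minors $\mu$ of $\Diff F(x)$, while $\st$ commutes with these minors and $\Diff(\cl f)(\st x)=\st(\Diff f(x))$, the minors of $\Diff F(x)$ have standard parts equal to the corresponding minors of $\Diff\bar F(\st x)$. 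Therefore $\st(\Je F(x))=\Je\bar F(\st x)$, so $\cl{(\Je F)}=\Je\bar F$ almost everywhere on $C'$, the two integrals coincide, and $\Haus^{e}(C)=\He_\Real(\st(C))$.
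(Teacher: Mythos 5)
Your proof is correct and takes essentially the same route as the paper's: parametrize $C$ as $\Gamma(f)$ over an $M$-cell, pass to $\cl f$ via Lemma~\ref{lem:int-Stan}, apply the real area formula, and identify the two integrands. The paper compresses this into one displayed chain of equalities, leaving implicit exactly the two points you elaborate (that $\st(C)$ agrees with $\Gamma(\cl f)$ up to an $\He_\Real$-null set, via the finite Lipschitz constant from Corollary~\ref{lipschitz}, and that $\cl{(\Je F)}=\Je\bar F$ a.e.), so your write-up is a faithful, more detailed version of the same argument.
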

\begin{proof}
Let $A \subset \Kb^e$ and $f: A \to \Kb^{n-e}$ be as in
Definition~\ref{def:e-rect}, and $F: A \to \Kb^n$ as in Definition~\ref{def:area-basic}.
Let $B := \st(A)$.
Then, using the real Area formula~\cite{morgan},
\[
\int_A \Jac_e F \de \LeK =
\int_B \Jac_e (\cl F)  \de \Leb^e_\Real =
\He_\Real(\cl F(B)) =
\He_\Real(\st(C)).
\]
\end{proof}

\begin{dfn}
Let $A \subseteq \Kb^n$ be definable of dimension at most $e$, and $(A_0, \dotsc, A_k)$ be a \berect
partition of~$A$.
Define 
\[
\Haus^e(A) := \sum_{i = 1}^k \Haus^e(A_i),
\]
where  $\Haus^e(A_i)$ is defined using~\ref{def:area-basic}.
\end{dfn}

\begin{lem}\label{lem:Hausdorff-well-defined}
If $A$ is as in the above definition, then $\Haus^e(A)$ does not depend 
on the choice of the \berect partition $(A_0, \dotsc, A_k)$.
\end{lem}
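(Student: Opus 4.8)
The plan is to reduce the independence to a single additivity statement for subdivisions of one \berect set and then argue by common refinement. The fact I would use repeatedly is that, by Lemma~\ref{lem:Hausdorff-standard}, the value $\Haus^e(C)$ of a \berect set equals the intrinsic quantity $\He_\Real(\st(C))$, so it does not depend on the presentation chosen for $C$; this is what lets me compare values computed in different coordinate systems.

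The core step I would isolate is the following claim: if $C \subseteq \Kb^n$ is \berect and $C = C_0 \sqcup C_1 \sqcup \dots \sqcup C_m$ with $\dim(C_0) < e$ and each $C_s$ (for $s \geq 1$) \berect, then $\Haus^e(C) = \sum_{s=1}^m \Haus^e(C_s)$. To prove this I would fix a presentation $C = \Gamma(f) = F(U)$ (after permuting coordinates), with $U \subseteq \Kb^e$ an open $M$-cell, $f : U \to \Kb^{n-e}$ of finite derivative, and $F(x) := \pair{x, f(x)}$. Each $C_s \subseteq \Gamma(f)$ is then the graph of $f$ over $U_s := \pi(C_s)$, where $\pi$ is the projection onto the first $e$ coordinates; thus $C_s = F(U_s)$, the $U_s$ are definable and pairwise disjoint (as $F$ is injective), and $\dim(U_0) = \dim(C_0) < e$. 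By Definition~\ref{def:area-basic} and finite additivity of the o-minimal integral over the pairwise disjoint definable domains $U_s \subseteq U$,
\[
\Haus^e(C) = \int_U \Je F \de \LeK
= \int_{U_0} \Je F \de \LeK + \sum_{s=1}^m \int_{U_s} \Je F \de \LeK ,
\]
where the $U_0$-term vanishes since $\dim(U_0) < e$ forces $\LeK(U_0) = 0$. For $s \geq 1$ I would invoke the computation in the proof of Lemma~\ref{lem:Hausdorff-standard}, now with the domain $U_s$ in place of the whole cell (the real Area formula together with Lemma~\ref{lem:int-Stan}, using injectivity of $\cl F$), to get $\int_{U_s} \Je F \de \LeK = \He_\Real(\st(C_s)) = \Haus^e(C_s)$, which closes the claim.

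Granting this claim, I would finish as follows. Given two \berect partitions $(A_0, \dots, A_k)$ and $(A'_0, \dots, A'_l)$ of $A$, for each $i, j \geq 1$ I would take a \berect partition of the definable set $A_i \cap A'_j$ (Corollary~\ref{cor:partition}), with \berect pieces $P^{ij}_s$ and a remainder of dimension $< e$. Since $A_0$ and $A'_0$ have dimension $< e$, the pieces $\{P^{ij}_s : j \geq 1,\ s\}$ subdivide $A_i$ exactly as in the claim, so $\Haus^e(A_i) = \sum_{j \geq 1, s} \Haus^e(P^{ij}_s)$; summing over $i$ yields $\sum_i \Haus^e(A_i) = \sum_{i,j,s} \Haus^e(P^{ij}_s)$. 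As each $P^{ij}_s$ also lies in $A'_j$, the symmetric computation applied to the partitions $\{P^{ij}_s\}_{i,s}$ of the $A'_j$ gives $\sum_j \Haus^e(A'_j) = \sum_{i,j,s} \Haus^e(P^{ij}_s)$, so the two partitions produce the same number.

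The step I expect to be delicate is the refinement claim, and within it the identity $\int_{U_s} \Je F \de \LeK = \He_\Real(\st(C_s)) = \Haus^e(C_s)$, which matches the integral computed in the ambient presentation of $C$ with the value computed from the (possibly different) presentation of $C_s$. This is precisely where one must not argue naively with $\He_\Real(\st(\cdot))$ on all of $A$, since that assignment fails to be additive (Example~\ref{ex:double}): additivity is recovered here only because the $C_s$ are graphs of the \emph{same} function $f$ over pairwise disjoint bases $U_s$, so the o-minimal integral over $U$ splits exactly. Everything else is bookkeeping with the finitely many definable pieces furnished by Corollary~\ref{cor:partition}.
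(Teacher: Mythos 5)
Your proof is correct and its skeleton coincides with the paper's: both arguments reduce the statement to additivity of $\HeK$ under a \berect subdivision of a single \berect set $C = F(U)$, pull the pieces back to the definable subsets $U_i = F^{-1}(A_i)$ of the base cell, and split $\int_U \Je F \de\LeK$ over the $U_i$, discarding the low-dimensional part. Where you diverge is in the identification $\int_{U_i} \Je F \de\LeK = \HeK(A_i)$, which is the only place the two presentations of $A_i$ must be compared. The paper stays inside $\K$: it forms the \bilip transition maps $H_i := G_i^{-1}\comp F : U_i \to V_i$ between the two parametrizations and applies the o-minimal change-of-variables formula (Lemma~\ref{lem:change-variables}) together with the identity $\Je(G\comp H) = (\Je G\comp H)\,\abs{\det \Diff H}$. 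You instead pass to the standard part: the real area formula and Lemma~\ref{lem:int-Stan} give $\int_{U_i}\Je F\de\LeK = \He_\Real(\st(A_i))$, and Lemma~\ref{lem:Hausdorff-standard}, applied to $A_i$ in its own presentation, converts the right-hand side back to $\HeK(A_i)$. Your route trades the change-of-variables lemma for a second application of the area formula, and it makes visible why additivity holds here despite Example~\ref{ex:double} (the pieces are graphs of the \emph{same} $f$ over disjoint bases); its extra cost is some standard-part bookkeeping --- checking that $\cl{F}(\st(U_i))$ and $\st(A_i)$ agree up to $\He_\Real$-null sets, and that Lemma~\ref{lem:int-Stan}(iv) applies to the restriction of $\Je F$ to the non-open definable set $U_i$ --- which is at the same level of rigor that the paper already accepts in its proof of Lemma~\ref{lem:Hausdorff-standard}. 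Your explicit common-refinement argument at the end correctly supplies the reduction that the paper leaves implicit in the opening sentence of its proof.
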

\begin{proof}
It suffices to prove the following: if $C$ is a \berect set and 
$(A_0, \dotsc, A_k)$ is a \berect partition of~$C$, then 
$\HeK(C) = \sum_{i = 1}^k \HeK(A_i)$, where $\HeK(C)$ and  $\HeK(A_i)$ are
defined using~\ref{def:area-basic}.
For every $i =1, \dotsc, n$ let $U$ and $V_i$ be $M$-cells, 
$f :U \to \K^{n-e}$  and $g_i : V_i \to \K^{n-e}$ be definable functions with
finite derivative, $\sigma_i$ be a permutation of variables of~$\K^n$,
$F : \K^e \to \K^n$ defined by $F(x) := (x, f(x))$, 
and $G_i: \K^e \to \K^n$ defined by $G(x) = \sigma_i(x, g_i(x))$
such that $C = F(U)$ and $A_i = G_i(V_i)$.
Define  $U_i := F^{-1}(A_i) \subseteq U$, and 
$H_i := G_i^{-1} \comp F : U_i \to V_i$.
Notice that each $H_i$ is a bi-Lipschitz bijection, that $U$ is the disjoint union of
the~$U_i$, and that $\dim(U_0) < e$.
Hence,
\begin{multline*}
\HeK(C) = \int_U \Je F \de \LeK = \sum_{i = 1}^n \int_{U_i} \Je F \de \LeK =
\sum_{i = 1}^n \int_{U_i} \Je(G_i \comp H_i) \de \LeK =\\
= \sum_{i = 1}^n \int_{U_i} (\Je(G_i) \comp H_i) \cdot \abs{\det(\Diff H_i)} \de \LeK =
\sum_{i = 1}^n \int_{V_i} \Je G_i \de \LeK =
\sum_{i = 1}^n \HeK(A_i),
\end{multline*}
where we used Lemma~\ref{lem:change-variables}, the fact that 
each $\sigma_i$ is a linear function with determinant~$\pm 1$,
and that $\Je(G \comp H) = (\Je(G) \comp H) \cdot \abs{\det(\Diff H)}$.
%$\HeK$ is invariant under the action of each~$\sigma_i$.
\end{proof}

\begin{lem}\label{lem:Hausdorff-n-invariant}
$\Haus^e$ does not depend on~$n$.
That is, let $m \geq n$, and $A \subset \Kb^n$ definable, 
and $\psi: \K^n \to \K^{m}$ be the embedding $x \mapsto (x,0)$.
Then, $\Haus^e(A) = \Haus^e(\psi(A))$.
\end{lem}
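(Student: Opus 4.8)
The plan is to reduce to the case in which $A$ itself is \berect, and then to note that the embedding $\psi$ changes neither the domain $M$-cell nor the value of the $e$-dimensional Jacobian. First I would fix a \berect partition $(A_0, \dotsc, A_k)$ of $A$ (Corollary~\ref{cor:partition}) and verify that $(\psi(A_0), \dotsc, \psi(A_k))$ is a \berect partition of $\psi(A)$. Since $\psi$ is an injective definable map, the $\psi(A_i)$ are pairwise disjoint, cover $\psi(A)$, and $\dim(\psi(A_0)) = \dim(A_0) < e$. If, after a permutation $\sigma$ of the coordinates of $\K^n$, the set $A_i$ is the graph of an $M$-function $f : U \to \K^{n-e}$ over an open $M$-cell $U \subset \K^e$, then applying $\sigma \times \mathrm{id}_{\K^{m-n}}$ to $\psi(A_i)$ turns it into the graph of $\tilde f := \pair{f, 0} : U \to \K^{m-e}$ over the \emph{same} cell $U$; because the appended zero components have vanishing derivative, $\tilde f$ is again an $M$-function, so each $\psi(A_i)$ is \berect. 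By the definition of $\Haus^e$ together with Lemma~\ref{lem:Hausdorff-well-defined}, it then suffices to prove $\Haus^e(\psi(C)) = \Haus^e(C)$ for a single \berect set $C$.

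For such a $C = \Gamma(f)$, set $F(x) := \pair{x, f(x)}$ and $\tilde F(x) := \pair{x, f(x), 0} = \pair{x, \tilde f(x)}$, so that $\Haus^e(C) = \int_U \Jac_e F \de \LeK$ and $\Haus^e(\psi(C)) = \int_U \Jac_e \tilde F \de \LeK$, both over the same $U$. The matrix $\Diff \tilde F(a)$ is obtained from the $n \times e$ matrix $\Diff F(a)$ by appending $m - n$ zero rows. Every $e \times e$ minor of $\Diff \tilde F(a)$ that uses one of the new rows vanishes, so the multiset of nonzero $e \times e$ minors is unchanged; moreover both matrices have only $e$ columns, hence rank at most $e$, so the $+\infty$ clause in the definition of $\Jac_e$ is never triggered by the rank condition, and $f$ is differentiable at $a$ iff $\tilde f$ is. Consequently $\Jac_e \tilde F(a) = \Jac_e F(a)$ for every $a \in U$, the two integrals agree, and $\Haus^e(\psi(C)) = \Haus^e(C)$.

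I expect the only delicate point to be the coordinate bookkeeping in the first paragraph: one must check that after the embedding $\psi(A_i)$ is still a graph over the \emph{same} $M$-cell $U$, so that the two Jacobian integrals are taken over identical domains and can be compared pointwise. Everything else is routine linear algebra. Alternatively, one could bypass the minor computation entirely by appealing to Lemma~\ref{lem:Hausdorff-standard}: the map $\st$ commutes with $\psi$, and the real $e$-dimensional Hausdorff measure is invariant under the isometric inclusion $\Real^n \hookrightarrow \Real^m$, whence $\Haus^e(\psi(C)) = \He_\Real(\st(\psi(C))) = \He_\Real(\st(C)) = \Haus^e(C)$.
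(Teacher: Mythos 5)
Your argument is correct and is exactly the one the paper intends: its proof of this lemma is the single line ``Obvious from the definition and Lemma~\ref{lem:Hausdorff-well-defined},'' and your two paragraphs simply fill in the details of that route (the embedded partition is still \berect, and appending zero rows leaves every nonvanishing $e \times e$ minor, hence $\Jac_e$, unchanged). No discrepancy to report.
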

\begin{proof}
Obvious from the definition and Lemma~\ref{lem:Hausdorff-well-defined}.
\end{proof}

Notice that $\Haus^0(C)$ is the cardinality of~$C$.

It is clear that $\Haus^e$ can be extended to the $\sigma$-ring generated by
the definable subsets of $\K^n$ of finite diameter and dimension at most~$e$;
we will also denote  the completion of this  extension by $\Haus^e$.

\begin{lem}
$\Haus^e$ is a measure on the $\sigma$-ring generated by
the definable subsets of $\K^n$ of bounded diameter and dimension at most~$e$.
\end{lem}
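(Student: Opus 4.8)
The plan is to follow exactly the scheme used for $\LnK$ in Section~\ref{sec:Lebesgue} (and hence the Berarducci--Otero construction): exhibit $\Haus^e$ as a premeasure on the ring of definable sets and then invoke the standard extension theorem of~\cite{halmos}. Let $\mathcal R$ denote the collection of definable subsets of $\K^n$ of bounded diameter and dimension at most~$e$. First I would check that $\mathcal R$ is a ring of sets: it is plainly closed under finite unions, finite intersections and set differences, since each of these operations preserves definability, boundedness of diameter, and the bound $\dim \leq e$. On $\mathcal R$ the set function $\Haus^e$ is already defined, and by Definition~\ref{def:area-basic} together with Lemma~\ref{lem:Hausdorff-well-defined} it is nonnegative, satisfies $\Haus^e(\emptyset)=0$, and takes finite values (the integral defining $\HeK$ of a \berect piece is the integral of a function with finite derivative, hence of a finite integrand, over a bounded cell, so it is finite).

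The first substantive step is finite additivity on $\mathcal R$. Given disjoint $X_1,X_2\in\mathcal R$ with $X:=X_1\sqcup X_2$, I would take \berect partitions of $X_1$ and of $X_2$ (which exist by Corollary~\ref{cor:partition}) and concatenate them, merging the two lower-dimensional pieces; this yields a \berect partition of~$X$. By the very definition of $\Haus^e$ on definable sets, together with the independence from the partition provided by Lemma~\ref{lem:Hausdorff-well-defined}, one obtains $\Haus^e(X)=\Haus^e(X_1)+\Haus^e(X_2)$. Iterating gives finite additivity for any finite disjoint union in~$\mathcal R$.

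The crux is to upgrade finite additivity to countable additivity on the ring, i.e.\ to treat a countable disjoint union $X=\bigsqcup_{i\in\Nat} X_i$ that happens to lie in~$\mathcal R$. Here I would use $\aleph_1$-saturation of~$\K$ to argue that all but finitely many $X_i$ are empty. Otherwise infinitely many $X_i$ are nonempty; being pairwise disjoint subsets of the definable set~$X$, the partial type $\set{x\in X}\cup\set{x\notin X_i: i\in\Nat}$ is finitely satisfiable, since for each $m$ any point of the nonempty set $\bigcup_{i>m}X_i$ witnesses the first $m$ conditions. By $\aleph_1$-saturation this type is realized by some $x\in X$ lying in no~$X_i$, contradicting $X=\bigcup_i X_i$. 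Hence the union is in fact finite and countable additivity reduces to the finite additivity already established. I expect this saturation step to be the main (though routine) obstacle, as it is precisely where the first-order nature of the structure is used, and it is the exact analogue of the point that makes $\LnK_1$ a measure in~\cite{bo}.

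Finally, having shown that $\Haus^e$ is a finite-valued, countably additive premeasure on the ring~$\mathcal R$, I would apply the Hahn--Carathéodory extension theorem from~\cite{halmos} to extend it to a measure on the generated $\sigma$-ring, and then pass to the completion, exactly as $\LnK$ was obtained from $\LnK_1$. Since each element of the $\sigma$-ring is covered by countably many definable sets of finite measure, the extension is moreover $\sigma$-finite. This produces the asserted measure and completes the proof.
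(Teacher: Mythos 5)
Your proposal is correct and takes essentially the same route as the paper: $\aleph_1$-saturation reduces countable additivity on the generating ring to finite additivity (since a definable set cannot be a countably infinite disjoint union of nonempty definable sets), and finite additivity follows from Lemma~\ref{lem:Hausdorff-well-defined} applied to a concatenated \berect partition. The paper's proof is just a two-line version of this, leaving the saturation argument, the ring structure, and the Carath\'eodory extension implicit.
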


\begin{proof}
Since $\K$ is $\aleph_1$-saturated, it suffices to show that, for every
$A$ and $B$ disjoint definable subsets of $\K^n$ of finite diameter and
dimension at most~$e$, $\HeK(A \cup B) = \HeK(A) + \HeK(B)$.
But this follows immediately from Lemma~\ref{lem:Hausdorff-well-defined}.
\end{proof}

\begin{example}\label{ex:double}
In Lemma~\ref{lem:Hausdorff-standard}, the assumption that $C$ is \berect is
necessary.
For instance, take $\epsilon > 0$ infinitesimal, and $X$ be the following
subset of $\K^2$
\[
X := \Pa{[0,1] \times \set 0} \cup \set{\pair{x,y}: 0 \leq x \leq 1 \ \&\  
y = \epsilon x}.
\]
Then, $\st(X) = [0,1] \times \set 0$, and thus
$\Haus^1(X) = 2$, while $\Haus^1_\Real(\st(X)) = 1$.
This is the source of complication in the theory, and one of the reasons why
we had to wait until this section to introduce $\HeK$.
\end{example}

% ----------------------------------

\section{Cauchy-Crofton formula}
Give $e \leq n$, define
\[
\beta := \Gamma\Pa{\tfrac{e + 1} 2} \Gamma\Pa{\tfrac{n - e + 1} 2}
\Gamma\Pa{\tfrac{n + 1} 2}^{-1} \pi^{- 1/2}.
\]

\begin{dfn}
Let $\Agr_e(\K^n)$ be the Grassmannian of affine $e$-dimensional subspaces
of $\K^n$ and let  $\Agr_e(\Real^n)$ be the Grassmannian of affine
$e$-dimensional subspaces of~$\Real^n$.
Fix an embedding of $\Agr_e(\Real^n)$ into some $\Real^m$,
such that $\Agr_e(\Real^n)$ is a \zsalg
closed submanifold of~$\Real^m$, 
and the restriction to $\Agr_e(\Real^n)$  of the 
$\dim(\Agr_e(\Real^n))$-dimensional Hausdorff measure 
coincides with the Haar measure on $\Agr_e(\Real^n)$.
%Define $\nu^{e,n} := \Leb^{\Agr_e(\K^n)}$.
%*Do you have a reference for this? If not, I can expand it a little more.*
\end{dfn}

\begin{dfn}
Given $A \subseteq \K^n$ and $E \in \Agr_{n-e}(\K^n)$, let $f_A(E) := \# (A \cap E)$.
\end{dfn}

\begin{thm}[Cauchy-Crofton Formula]\label{thm:Cauchy-Crofton}
Let $A \subseteq \Kb^n$ be definable of dimension $e$.
Then,
\[
\HeK(A) = \frac 1 \beta \int_{\Agr_{n-e}(\K^n)} f_A \de \Leb^{\Agr_{n-e}(\K^n)}.
\]
\end{thm}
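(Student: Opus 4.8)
The plan is to reduce the statement to the classical Cauchy–Crofton formula over the reals via the standard-part map, exploiting the definitions and tools already developed. The overall strategy proceeds in three stages: first reduce to the basic rectifiable case using the partition machinery; second, on a basic rectifiable piece, transfer the integral defining the right-hand side to the reals using the structure $\Real_K$ and the results of Lemma~\ref{lem:int-Stan}; third, invoke the real Cauchy–Crofton formula together with Lemma~\ref{lem:Hausdorff-standard} to identify the result with $\HeK(A)$.

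First I would reduce to the case where $A$ is \berect. By Corollary~\ref{cor:partition}, write $A = \bcup_{i=0}^k A_i$ with $\dim(A_0) < e$ and each $A_i$ (for $i>0$) basic $e$-rectifiable. Both sides of the claimed identity are additive over such a partition: the left side by the definition of $\HeK$ and Lemma~\ref{lem:Hausdorff-well-defined}, and the right side because $f_A = \sum_i f_{A_i}$ off a set of affine $(n-e)$-planes of measure zero (those meeting the pieces non-generically or meeting the lower-dimensional part $A_0$). One must check that the lower-dimensional piece $A_0$ contributes nothing: since $\dim(A_0) < e$, a generic affine $(n-e)$-plane misses $A_0$ entirely, so $f_{A_0} = 0$ almost everywhere, which makes its contribution to the integral vanish. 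This reduces the theorem to proving the formula for a single \berect set $C$.

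Next, for a \berect set $C$, I would pass to the reals. The key point is that the function $E \mapsto f_C(E) = \#(C \cap E)$ on $\Agr_{n-e}(\K^n)$ should correspond, under the standard-part map on the affine Grassmannian, to the analogous counting function for the real set $\st(C) \subset \Real^n$. Concretely, using Remark~\ref{rem:BO} and the definition of the measure $\Leb^{\Agr_{n-e}(\K^n)}$ on the \zsalg submanifold $\Agr_{n-e}(\K^n)$ (via the identification of the Haar measure with the Hausdorff measure), one transfers
\[
\int_{\Agr_{n-e}(\K^n)} f_C \de \Leb^{\Agr_{n-e}(\K^n)}
= \int_{\Agr_{n-e}(\Real^n)} \cl{f_C} \de \Leb_\Real,
\]
where $\cl{f_C}(\st E) = f_{\st(C)}(\st E)$ for generic $E$. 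The equality $\cl{f_C} = f_{\st(C)}$ almost everywhere is where one uses that $C$ is basic rectifiable: since $C$ is the graph of a finite-derivative $M$-function over an $M$-cell and $\st$ behaves well on such graphs (as in Lemma~\ref{lem:int-Stan} and Lemma~\ref{lem:Hausdorff-standard}), a generic affine plane over $\Real$ meets $\st(C)$ in exactly the standard parts of the finitely many intersection points of a nearby plane with $C$, with no collapsing or spurious points.

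Finally I would apply the classical Cauchy–Crofton formula on $\Real^n$ to the real rectifiable set $\st(C)$, obtaining
\[
\He_\Real(\st(C)) = \frac 1 \beta \int_{\Agr_{n-e}(\Real^n)} f_{\st(C)} \de \Leb_\Real,
\]
and then combine this with Lemma~\ref{lem:Hausdorff-standard}, which gives $\HeK(C) = \He_\Real(\st(C))$, to conclude. The main obstacle is the second stage: establishing rigorously that the counting function $f_C$ standardizes correctly, i.e.\ that for almost every real affine plane $\st E$ the real count $f_{\st(C)}(\st E)$ equals the standard count arising from $C$, with no loss of points under $\st$ and no non-transverse degeneracies on a positive-measure set of planes. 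This requires a transversality argument showing that the bad set of planes (tangencies, planes through the boundary or the infinitesimal directions of $C$, or planes where distinct intersection points of $C$ collapse to a single standard part) is negligible in $\Agr_{n-e}$, together with a uniform bound on the number of intersection points coming from the $M$-cell structure and $o$-minimality.
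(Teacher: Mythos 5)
Your overall route is the same as the paper's: reduce to a single \berect piece via Corollary~\ref{cor:partition} (the lower-dimensional part $A_0$ indeed contributes nothing since a generic $(n-e)$-plane misses a set of dimension $<e$), transfer the integral of the counting function to $\Real^n$ via the standard-part map, and finish with the classical Cauchy--Crofton formula plus Lemma~\ref{lem:Hausdorff-standard}. The issue is that the step you label as ``the main obstacle'' --- that for almost every plane $E$ one has $\#(C\cap E)=\#(\st C\cap \st E)$, with no collapsing or loss of intersection points --- is precisely where all the content of the theorem lives, and your proposal only names the kind of argument needed (``a transversality argument'') without supplying it. In the paper this is Lemma~\ref{lem:card}, and its proof is not a routine genericity remark: writing $C=\Gamma(f)$ with $F(x)=\pair{x,f(x)}$ and $h=\pi\comp F$, one must (a) apply the Morse--Sard statement for the $\Real_K$-definable map $\cl h$ (Lemma~\ref{lem:Sard}) to discard the $S$-singular values, (b) use Remark~\ref{rem:lip-zero} to discard the image under the Lipschitz map $h$ of the exceptional set $\st^{-1}(E)$ from Lemma~\ref{lem:int-Stan}, and, crucially, (c) invoke the nonstandard implicit function theorem (Lemma~\ref{lem:implicit}), whose contraction-mapping argument is what guarantees that above each $S$-regular value every standard intersection point lifts to exactly one point of $C\cap E$ in the corresponding monad --- this is the ``no collapsing, no spurious points'' assertion you state but do not prove. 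Without some version of (c) the proof is incomplete, since Example~\ref{ex:double} in the paper shows exactly how counting can go wrong under $\st$ when this control fails.

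One further small point: the paper's Lemma~\ref{lem:card} is stated only for the family of $(n-e)$-planes orthogonal to a fixed coordinate $e$-plane $V$, and passing from that to almost every element of the full affine Grassmannian (as both you and the paper need) requires a Fubini-type argument over the direction component together with rotation invariance; your version works directly on $\Agr_{n-e}(\K^n)$, which is the right level of generality but means the negligibility of the bad set must be established there, not just fiberwise over one $V$.
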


We prove the theorem by reducing it to the known case of $\K=\Real$. This is done by showing that $\# (A\cap E)$ equals $\#(\st A\cap\st E)$ almost everywhere.
%\begin{rem} Let $U \subset \Kb^n$ be open and definable, and $f: U \to \Kb^m$ be definable. If $f$ is $M$-Lipschitz then, for every $x \in U$, $\st(f(x)) = \cl f(\st x)$.\end{rem}

\begin{dfn}
Let $f: U \to \Kb^m$ be definable, with $U\subset\Kb^n$ open. 
Let $E \subset \Real^n$ and $\cl f$ be as in Lemma~\ref{lem:int-Stan}.
We say that  
$b \in \Real^n$ is an $S$-regular point of~$\cl f$ if
\begin{enumerate}
\item [i)] $b \in \st(U) \setminus \cl E$;
\item [ii)] $b$ is a regular point of~$\cl f$.
\end{enumerate}
Otherwise, we say that $b$ is an $S$-singular point 
%of~$\cl f$ and
%An $S$-singular value of $\cl f$ is a point of the form $\st(f(x))$ for some
%$x \in U$ with $\st(x)$ $S$-singular point.
and $\cl f(b)$ is an $S$-singular value of~$\cl f$. 
If $c\in\Real^m$ is not an $S$-singular value, we say that $c$ is an $S$-regular value of $\cl f$.
\end{dfn}

\begin{rem}
Let $S$ be the set of $S$-regular points of $\cl f$.
Then, $S$~is open and definable in~$\Real_\K$. 
%Moreover, by saturation, condition i) implies that there is a $\rho\in\Q_{>0}$ such that $B(b;\rho)\subset S$.
\end{rem}

\begin{lem}[Morse-Sard]\label{lem:Sard}
Assume that $m \geq n$.
Then, the set of $S$-singular values of~$\cl f$ is $\Leb^m_\Real$-negligible,
\end{lem}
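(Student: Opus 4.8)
The plan is to reduce the Morse--Sard statement for the $\Real_\K$-definable function $\cl f$ to the classical Morse--Sard theorem for $C^1$ maps in the o-minimal structure $\Real_\K$. Recall that $\cl f : C \to \Real^m$ is defined on an open set $C \subseteq \st(U)$ and, by Lemma~\ref{lem:int-Stan}(ii), is $\Cone$ on $C$; moreover $C$ is definable in the o-minimal structure $\Real_\K$ (since $\Real_\K$ is o-minimal by~\cite{bp}, and by the preceding Remark the set $S$ of $S$-regular points is open and definable in $\Real_\K$). So the first step is to observe that the $S$-regular points are exactly the ordinary regular points of the $\Cone$ map $\cl f$ restricted to the open definable set $S \subseteq C \setminus \cl E$, and the $S$-singular values are the images under $\cl f$ of the complement of $S$.

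The second step is to split the set of $S$-singular points into two pieces and bound the measure of the image of each. The first piece is the ``boundary'' contribution $(\st(U)\setminus C) \cup \cl E$, over which $\cl f$ is either undefined or whose domain has already been arranged, via Lemma~\ref{lem:int-Stan}(i), to be $\Leb^n_\Real$-negligible; the second piece is the genuine critical set $\Sigma := \{b \in S \text{'s domain} : \rank(\Diff \cl f(b)) < n\}$ where $\cl f$ fails to be submersive. Since $m \geq n$, a point $b$ is regular precisely when $\Diff \cl f(b)$ has full rank $n$, so $\Sigma$ is exactly the critical set of the $\Cone$ map $\cl f$ in the usual sense.

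The third step is to invoke the o-minimal Morse--Sard theorem for $\Real_\K$: for a $\Cone$ definable map in an o-minimal expansion of the reals, the set of critical values has dimension strictly less than $\min(m,n) = n \leq m$, hence is $\Leb^m_\Real$-negligible (this is the definable analogue of the classical statement, and holds in any o-minimal structure by the usual stratification/dimension argument; since $\Real_\K$ is o-minimal we may apply it). The negligible boundary piece $E$ contributes a set of dimension at most $n-1 < m$ under the $\Cone$ map $\cl f$ by Remark~\ref{rem:HPP}-type dimension bounds, so its image is also negligible. Taking the union of these two negligible images yields that the full set of $S$-singular values is $\Leb^m_\Real$-negligible.

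The main obstacle I anticipate is the bookkeeping around the non-regular domain points that are \emph{not} genuine critical points, namely the set $E$ where $\cl f$ is merely ill-behaved or undefined rather than rank-deficient. One must be careful that the image $\cl f(E)$ (or the image of a negligible subset of the domain under a $\Cone$ map) is still $\Leb^m_\Real$-negligible: this is immediate in the classical setting because $\Cone$ maps are locally Lipschitz and Lipschitz images of negligible sets are negligible when $m \geq n$, but here one should verify it definably, e.g. by noting $\dim(\cl f(E)) \leq \dim(E) < n \leq m$. Once that dimension-to-measure passage is secured, the remainder is a direct citation of the o-minimal Sard theorem, so the real work is confirming that all the exceptional sets assembled in Lemma~\ref{lem:int-Stan} fit together into a single set whose $\cl f$-image has dimension below $m$.
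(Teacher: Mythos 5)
Your proposal is correct and follows essentially the same route as the paper: decompose the $S$-singular values into the image $\cl f(E)$ of the exceptional set from Lemma~\ref{lem:int-Stan} plus the ordinary singular values, handle the former by the dimension argument ($E$ negligible and $\Real_\K$-definable implies $\dim(E)<n\leq m$, so $\cl f(E)$ is negligible) and the latter by the Sard theorem for definable $\Cone$ maps in the o-minimal structure $\Real_\K$. The subtlety you flag about passing from negligibility of $E$ to negligibility of its image is exactly the point the paper's proof addresses, and your resolution matches it.
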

\begin{proof} 
%By \cite{bo}, $\cl U\setminus U$ is negligible.  Let
%$X:=\st(U)\cap\st(K\setminus U)$.  
%Then $X\subset\st(\cl U\setminus U)$, so
%$X$ is negligible. Since $X$ is also $\Real_\K$-definable, it has empty
%interior and therefore $\dim(X)<n$. 
By Lemma~\ref{lem:int-Stan}, $E$ is negligible;
since $E$ is also $\Real_\K$-definable, it has empty
interior and therefore $\dim(E)<n$. 
Since $m\geq n$, it  follows that $\cl f(E)$ is negligible. 
The set of $S$-singular values of $\cl f$ is the union of
$\cl f(E)$ and the set of singular values of~$\cl f$; it is therefore negligible.
\end{proof}

\begin{lem}[Implicit Function]\label{lem:implicit}
Assume that $m = n$.
Let $b \in \Real^n$.
If $b$ is an $S$-regular point of~$\cl f$ then, for every $y \in \st^{-1}(\cl f(b))$
there exists a unique $x \in \st^{-1}(b)$ such that $f(x) = y$.
\end{lem}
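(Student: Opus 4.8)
The plan is to reduce to the real inverse function theorem for $\cl f$ while exploiting a \emph{uniform} closeness of $\Diff f$ to a constant invertible matrix on a definable ball of \emph{standard} radius. First I would normalise: replacing $f$ by $x\mapsto \Diff\cl f(b)^{-1}\Pa{f(x)-\cl f(b)}$ and translating, which is legitimate because $\st$ commutes with real affine maps (so the $\cl{\phantom f}$ of the new function is the corresponding normalisation of $\cl f$), I may assume $b=0$, $\cl f(0)=0$ and $\Diff\cl f(0)=I$; solving $f(x)=y$ and the monads $\st^{-1}(0)$ are unaffected. Since $0$ is $S$-regular, $0\in C\setminus\cl E$, so by Lemma~\ref{lem:int-Stan} I can pick a real $\rho>0$ with $B(0,\rho)\subseteq C$ and $B(0,\rho)\cap E=\emptyset$, whence $\st^{-1}(B(0,\rho))\subseteq U$, with $\cl f$ a $\Cone$-diffeomorphism on $B(0,\rho)$ and $\norm{\Diff\cl f(v)-I}<1/4$ for all real $v\in B(0,\rho)$ (continuity of $\Diff\cl f$ at $0$). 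Passing to the definable ball $D:=\set{x\in\K^n:\abs x<\rho/2}\subseteq U$, every $x\in D$ has $\st x\in B(0,\rho)$, so $\st\Diff f(x)=\Diff\cl f(\st x)$ yields $\norm{\Diff f(x)-I}<1/2$. The point is that this is now a \emph{definable, uniform} estimate on a macroscopic ball.

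With this in hand, injectivity and uniqueness are immediate. For $x,x'\in D$ the segment $x_t:=x'+t(x-x')$ stays in $D$, and the fundamental theorem of calculus (valid for definable $\Cone$ maps in the o-minimal field $\K$) gives $f(x)-f(x')=\Pa{\int_0^1\Diff f(x_t)\de t}(x-x')$ with $\norm{\int_0^1\Pa{\Diff f(x_t)-I}\de t}<1/2$; hence $\tfrac12\abs{x-x'}\le\abs{f(x)-f(x')}\le\tfrac32\abs{x-x'}$. In particular $f$ is injective on $D\supseteq\st^{-1}(0)$, so the required $x$ is unique.

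For existence I would solve $f(x)=y$ by a definable minimisation. Given $y$ with $\abs{y-f(0)}<\rho/8$, the function $\psi(x):=\abs{f(x)-y}^2$ attains its minimum on the definable closed bounded set $\cl D$ at some $x^{*}$ (o-minimal extreme value theorem). The lower Lipschitz bound gives $\abs{f(x)-y}\ge\rho/4-\abs{f(0)-y}>\rho/8$ for $\abs x=\rho/2$, while $\psi(0)^{1/2}=\abs{f(0)-y}<\rho/8$, so $x^{*}$ is interior; there $\Diff f(x^{*})^{\mathsf T}\Pa{f(x^{*})-y}=0$, and since $\norm{\Diff f(x^{*})-I}<1/2$ makes $\Diff f(x^{*})$ invertible, $f(x^{*})=y$. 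Thus $f(D)\supseteq\set{y:\abs{y-f(0)}<\rho/8}$, which contains $\st^{-1}(0)$ because every $y$ with $\st y=0$ is infinitesimally close to $f(0)$. Finally $x^{*}\in\st^{-1}(0)$: from $\st f(x^{*})=\cl f(\st x^{*})=\st y=0=\cl f(0)$ and the injectivity of $\cl f$ on $B(0,\rho)$ we get $\st x^{*}=0$.

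The \textbf{main obstacle} is exactly the step hidden in the first paragraph: a direct application of the inverse function theorem inside $\K$ would only produce a neighbourhood of $0$ of possibly \emph{infinitesimal} radius, which need not contain the whole monad $\st^{-1}(0)$. The essential idea is therefore to upgrade the pointwise relation $\st\Diff f(x)=\Diff\cl f(\st x)$ of Lemma~\ref{lem:int-Stan} into a uniform bound $\norm{\Diff f-I}<1/2$ on a definable ball of standard radius, using only continuity of the real map $\Diff\cl f$ at $b$; after that, a single quantitative, fully definable inverse-function argument (the minimisation above) delivers surjectivity onto the monad with the correct, macroscopic domain.
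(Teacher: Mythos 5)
Your argument is correct, but it reaches the conclusion by a genuinely different mechanism than the paper's. The paper fixes $x_0\in\st^{-1}(b)$, sets $A:=(\Diff f(x_0))^{-1}$, and runs a Newton-type contraction $T_y(x):=x+A\cdot(y-f(x))$ on a closed ball of standard radius, taking the solution of $f(x)=y$ to be the unique fixed point of $T_y$, and then identifies $\st(x)=b$ as the unique fixed point of the induced real contraction $\cl{T_y}$. You instead normalise so that $\Diff f$ is uniformly within $1/2$ of the identity on a definable ball of standard radius (the same essential upgrade, from the pointwise identity $\st(\Diff f(x))=\Diff\cl f(\st x)$ to a uniform bound on a macroscopic ball, that underlies the paper's choice of $\rho$), read off two-sided Lipschitz estimates, and obtain surjectivity by minimising $\abs{f(x)-y}^2$ over the closed ball, using the o-minimal extreme value theorem and the first-order condition at an interior minimum. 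This buys something real: the existence of ``a unique fixed point'' of a definable contraction in the non-complete field $\K$ is exactly the point the paper leaves implicit (Banach's theorem uses completeness, so one must argue definably, e.g.\ by minimising $\abs{T_y(x)-x}$), and your minimisation is precisely the definable substitute for completeness. Two small repairs are needed. First, the identity $f(x)-f(x')=\Pa{\int_0^1\Diff f(x_t)\de t}(x-x')$ is not literally available, since the only integral defined in this paper is the standard-part integral; replace it by the o-minimal mean value inequality applied to $t\mapsto f(x_t)-x_t$, which yields the same estimate $\abs{f(x)-f(x')-(x-x')}\le\frac12\abs{x-x'}$. Second, multiplying by the real matrix $\Diff\cl f(b)^{-1}$ presupposes $\Real\subseteq\K$ (implicit elsewhere in the paper); if one prefers not to assume this, use $(\Diff f(x_0))^{-1}$ for some $x_0\in\st^{-1}(b)$ as the paper does. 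Also, you do not need $\cl f$ to be a diffeomorphism on $B(0,\rho)$: injectivity of $\cl f$ there, which follows from the same derivative bound by the real segment argument, is all the final step uses.
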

\begin{proof}
Choose $x_0 \in \st^{-1}(b)$.
Let $A := (\Diff f(x_0))^{-1}$. Since $b$ is a regular point of $\cl f$, $\norm A$ is finite.
Thus we can choose $r, \rho \in \Q_{> 0}$ such that $B := \cl{B(b; \rho)}$
is contained in the set of $S$-regular points of~$\cl f$,
and
\[\begin{aligned}
\norm{\Diff \cl f(b') - \Diff \cl f(b)} &< \frac{1}{2 n \norm A}, & \text{for every }
b' \in B\\
r &\leq \frac{\rho}{2 \norm A}.
\end{aligned}\]
Moreover, we can pick $\rho$ such that  $B' := \cl{B(x_0; \rho)} \subset U$.
Given $y \in \K^n$ such that $\abs{y - f(x_0)} < r$, consider the mapping
\[\begin{aligned}
T_y    &: B' \to \K^n\\
T_y(x) &:= x + A \cdot (y - f(x)).
\end{aligned}\]
$T_y$ is definable and Lipschitz, with Lipschitz constant $1/2$.
Therefore, for every $y \in B(f(x_0); r)$ there exists a unique $x \in B'$
such that $T_y(x) = x$. Thus, there is a unique $x\in B$ with $f(x) = y$.
It remains to show that, given $y \in \st^{-1}(\cl f(b))$ and $x \in B'$ such that
$f(x) = y$, we have $x \in \st^{-1}(b)$.
We can verify that
\[\begin{aligned}
\cl T_y &: B \to B\\
\cl T_y(b') &= b' + (\Diff \cl f(b))^{-1} \cdot (\cl f(b) - \cl f(b'))
\end{aligned}\]
is also a contraction, and therefore it has a unique fixed point, namely $b$. Since $\cl T_y(\st(x))=\st(x)$, we must have $\st(x)=b$.
\end{proof}

\begin{rem}\label{rem:lip-zero}
% Define $\tilde U := U \setminus \st^{-1}(\Real^n \setminus \st(U))$. Then, $\tilde U$ is $\Leb^n$-negligible (but not definable, in general). Assume that $U$ is open.
Let $U \subset \Kb^m$. If $f: U \to \Kb^n$ is definable and $M$-Lipschitz (for some
finite~$M$), $n\geq m$ and $E$ is $\Leb_\Real^m$-negligible,
then the set $f(\st^{-1}(E))$ is $\Leb^n$-negligible.
% and in particular $f(\tilde U)$ is negligible. If instead $f$ is not Lipschitz, then $f(U \setminus \tilde U)$ might not be negligible.
\end{rem}
\begin{proof}
%The fact that $\tilde U$ is negligible is \cite[Theorem~10.4]{HPP}. Assume now that $f$ is $M$-Lipschitz, with $M$ finite.
We can cover $E$ with a polyrectangle $Y$ whose measure is an arbitrarily small rational number ~$\lambda$ and such that ~$Y$  covers $\st^{-1}(E)$.  Since $f(Y)$ has measure at most $CM^n \lambda$ (C depends only on $m$ and $n$) the result follows.
%Finally, let  $\eps > 0$ be infinitesimal, $U = (\eps/2 ,1)$, and $f: (0, 1) \to (0,1)$ be the polygon function with vertices $f(0) = 0$, $f(\eps/2) = 1$ and $f(1) = 1$. Then, $\tilde U$ is the set of elements of $U$ infinitely close to $0$ or~$1$, and $f(\tilde U) \supset (1/2,1)$, and therefore it is not negligible.
\end{proof}

\begin{lem}\label{lem:card}
Let $A \subseteq \Kb^n$ be a \berect set of dimension~$e$.
Consider $V := \K^e$ as embedded in $\K^n$ via the map
$x \mapsto \pair{x, 0}$.
Identify each $p \in V$ with the $(n-e)$-dimensional affine space which is
orthogonal to $V$ and intersects $V$ in~$p$.
Then, for almost every $p \in V$, we have
$\#(p \cap A) = \#(\st(p) \cap \st(A))$.
\end{lem}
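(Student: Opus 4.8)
The plan is to reduce everything to the single-valuedness of the standard part of the defining function, as provided by Lemma~\ref{lem:int-Stan}. Since $A$ is \berect of dimension~$e$, Definition~\ref{def:e-rect} gives, after a permutation of coordinates, a finite-derivative $M$-function $f : U \to \K^{n-e}$ over an open $M$-cell $U \subseteq \Kb^e$ with $A = \Gamma(f)$. Permutations are orthogonal, commute with $\st$, preserve $\Leb^e$, and carry orthogonal affine fibres to orthogonal affine fibres, so w.l.o.g.\ the permutation is trivial and $V = \K^e$ is the first $e$ coordinates over which $A$ is a graph. Writing $\pi : \K^n \to V$ for the projection onto these coordinates, the affine space attached to $p = \pair{p',0}$ is the fibre $\pi^{-1}(p') = \set{p'} \times \K^{n-e}$. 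As $A$ is a graph over $U$, this fibre meets $A$ in the single point $\pair{p', f(p')}$ when $p' \in U$ and not at all otherwise; hence $\#(p \cap A) \in \set{0,1}$, equal to $1$ precisely when $p' \in U$.

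First I would record the real side. Applying Lemma~\ref{lem:int-Stan} to $f$ componentwise, then intersecting the good domains and uniting the exceptional sets, I obtain an open $C \subseteq \st(U)$, an $\Real_\K$-definable $\cl f : C \to \Real^{n-e}$, and the negligible set
\[
E = \Pa{\st(U) \setminus C} \cup \Pa{C \cap \st(\K^e \setminus U)},
\]
with $\st^{-1}(E)$ $\Leb^e$-negligible and $\st(f(x)) = \cl f(\st(x))$ for every $x \in U$ with $\st(x) \in C$. Setting $b := \st(p')$, the intersection $\st(p) \cap \st(A)$ equals $\Pa{\set b \times \Real^{n-e}} \cap \st(A)$, whose points are exactly the $\pair{b, \st(f(x))}$ with $x \in U$ and $\st(x) = b$.

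The heart of the argument is to show the two counts agree off $\st^{-1}(E)$. Take $p' \in \Kb^e$ with $b = \st(p') \notin E$; then either $b \notin \st(U)$ or $b \in C$. If $b \notin \st(U)$, no $x \in U$ has $\st(x) = b$, so the fibre of $\st(A)$ is empty, and likewise $p' \notin U$, so both counts are $0$. If $b \in C$, then by part~iii) of Lemma~\ref{lem:int-Stan} every $x \in U$ with $\st(x) = b$ satisfies $\st(f(x)) = \cl f(b)$, so the fibre of $\st(A)$ is the single point $\pair{b, \cl f(b)}$ (nonempty since $b \in C \subseteq \st(U)$), giving count $1$; moreover $b \notin C \cap \st(\K^e \setminus U)$ forces $b \notin \st(\K^e \setminus U)$, whence $p' \in U$ and $\#(p \cap A) = 1$ as well. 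For $p' \notin \Kb^e$ both sets are empty. Thus the counts agree for all $p'$ outside the $\Leb^e$-negligible set $\st^{-1}(E)$.

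I expect the main obstacle to be exactly the collapse phenomenon of Example~\ref{ex:double}: a priori the fibre of $\st(A)$ over $b$ could contain several points arising from sheets of $A$ that are infinitely close in the base $U$ but separated in the fibre direction. Ruling this out is precisely what part~iii) of Lemma~\ref{lem:int-Stan} buys, since it makes $\st \comp f$ single-valued on $\st^{-1}(C)$; and the term $C \cap \st(\K^e \setminus U)$ in $E$ is what controls the symmetric defect, where $p'$ lies just outside $U$ while $b$ lies inside $C$. Checking that the componentwise use of Lemma~\ref{lem:int-Stan} amalgamates into a single $C$, $\cl f$, and $E$ with the stated properties is routine, and I would relegate it to a single sentence.
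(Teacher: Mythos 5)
Your reduction to the case of a trivial permutation is where the proof breaks down. Definition~\ref{def:e-rect} only guarantees that $A$ is a graph over $\K^e$ \emph{after} some permutation $\sigma$ of the $n$ coordinates, whereas the subspace $V=\K^e\times\set{0}$ and its orthogonal fibres are fixed by the statement of the lemma. Applying $\sigma^{-1}$ does turn $A$ into a literal graph $\Gamma(f)$ over the first $e$ coordinates, but it turns the fibres orthogonal to $V$ into fibres orthogonal to $\sigma^{-1}(V)$, which is in general a \emph{different} coordinate $e$-plane; you then silently replace these by the vertical fibres $\set{p'}\times\K^{n-e}$ of the graph, which changes the problem. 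Concretely, for $n=2$, $e=1$, the set $A=\set{\pair{(t-1/2)^2,\,t}:t\in(0,1)}$ is \berect (via the coordinate swap), and the vertical line over $p=\pair{1/100,0}$ meets $A$ in two points; so your claim that $\#(p\cap A)\in\set{0,1}$ fails on a set of positive measure, and the subsequent counting argument collapses with it.

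Once the fibres are allowed to be transverse to the graph direction, the lemma has genuine content that your argument never touches. Writing $F(x):=\sigma(\pair{x,f(x)})$ and $h:=\pi\comp F:U\to V$ (with $\pi$ the orthogonal projection onto $V$), one has $\#(p\cap A)=\#(h^{-1}(p))$ since $F$ is injective, and the task is to show $\#(h^{-1}(p))=\#(\cl h^{-1}(\st p))$ for almost every $p$. This requires both that distinct points of $h^{-1}(p)$ do not collapse under $\st$ (exactly the phenomenon of Example~\ref{ex:double}) and that $\cl h^{-1}(\st p)$ acquires no spurious points. The paper obtains this by applying the Morse--Sard lemma (Lemma~\ref{lem:Sard}) to $\cl h$ and then the Implicit Function lemma (Lemma~\ref{lem:implicit}), which produces a bijection between the two fibres over every $S$-regular value; neither enters your argument. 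Part~iii) of Lemma~\ref{lem:int-Stan}, which is all you use, only gives single-valuedness of $\st\comp f$ over a fixed base point, and that suffices only for the vertical-fibre situation you incorrectly reduced to.
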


\begin{proof}
Let $\pi: \K^n \to V$ be the orthogonal projection.
Let $U \subset \Kb^e$ be an open $M$-cell and $f: U \to \K^{n-e}$ be
a definable $M$-function ($M$ finite) such that $A = \Gamma(f)$. 
Let $F(x) := \pair{x, f(x)}$.
Let $h := \pi \circ F: U \to V$, and consider $\cl h: C\to \st(V)$,
$C\subset\st(U)$ as in Lemma~\ref{lem:int-Stan}. 
For almost every $p \in V$, $\#(p \cap A) = \#(h^{-1}(p))$, 
and $\#(\st p \cap st A) = \#(\cl h^{-1}(\st p))$
because $F: U \to A$ and  $\cl F: C \to \Imag(\cl F)$ are bijections.
Thus, it suffices to prove that, for almost every $p \in V$,
$\#(h^{-1}(p)) = \#(\cl h^{-1}(\st p))$.  Let $E$ be as in Lemma ~\ref{lem:int-Stan}. By Remark \ref{rem:lip-zero}, $h(\st^{-1}(E))$ is $\Leb^e$-negligible. Let $S$ be the set of $S$-singular values of $\cl h$, by Lemma \ref{lem:Sard}, $S$ is negligible.
%Define $ X := h(\tilde U) \cup h(Y).$
%\begin{claim}
%$X$ is $\Leb^e$-negligible.
%\end{claim}
%$h(\tilde U)$ is negligible by Remark~\ref{rem:lip-zero}, thus it suffices to
%show that $h(Y)$ is also negligible.
%, that is that $\st(h(B))$ is $\Leb^e_{\Real}$-negligible.
%Since the set $Z$ of $S$-singular values  of $\cl h$ is 
%$\Leb^e_{\Real}$-negligible, it
%suffices to prove that $h(Y) \setminus \st^{-1}(Z)$ is negligible.
%Let $p \in h(Y) \setminus \st^{-1}(Z)$; thus, $p = h(q)$ for some $q \in Y$,
%but $\st p \neq \cl h(\st q)$.
%Hence, $q \in C$, where $C$ is defined as in Lemma~\ref{lem:int-Stan},
%and $p \in h(C)$.
%By Remark~\ref{rem:lip-zero}, $h(C)$ is negligible, and the claim is proved.

%Choose $p$ such that $\st(p)$ is an $S$-regular value for~$\cl h$
%and $p \notin h(\tilde U)$, where $\tilde U$ is defined as in Remark~\ref{rem:lip-zero}.
Let $p \in V \setminus (\st^{-1}(S)\cup~ h(\st^{-1}(E))$. Then for  every $x$ in $h^{-1}(p)$, ~$\st(x)$ is  an $S$-regular point of $\cl h$, and therefore  Lemma~\ref{lem:implicit} implies $\#(h^{-1}(p)) = \#(\cl h^{-1}(\st p))$.
\end{proof}
Notice that the above lemma does not hold if $A$ is only definable, instead of 
\berect.

\begin{proof}[Proof of Theorem~\ref{thm:Cauchy-Crofton}]
By Corollary~\ref{cor:partition}, \wloG $A$ is \berect.
Let $B := \st(A)$, and $f_B(F) := \#(B \cap F)$, for every 
$F \in \Agr_e(\Real^n)$.
%Let $\Grass_e(\K^n)$ be the set of $e$-dimensional linear subspaces of~$\K^n$;
%it can  be realized as a \zsalg set, and
By Lemma~\ref{lem:card},
\[
\int_{\Agr_{n-e}(\K^n)} f_A \de \Leb^{\Agr_{n-e}(\K^n)} =
\int_{\Agr_{n-e}(\Real^n)} f_B \de \Leb^{\Agr_{n-e}(\Real^n)}.
%\nu^{n-e,n}_\Real.
\]
By the usual Cauchy-Crofton formula~\cite[3.16]{morgan}, the right-hand side
in the above identity is equal to $\Haus^e_\Real(B) = \Haus^e(A)$, where we
applied Lemma~\ref{lem:Hausdorff-standard}.
\end{proof}

% ------------------------

\section[Coarea formula]{Further properties of Hausdorff measure and the
  Co-area formula}

\begin{thm}
Let $e \leq n$ and $C \subseteq \K^n$ be bounded and definable of dimension 
at most~$e$.
\begin{enumerate}
\item $\HeK$ is invariant under isometries.
\item For every $r \in \Kb$, $\HeK(r C) = \st(r)^e \HeK(C)$.
\item If $C$ is \zsalg, then
$\HeK(C) = \He(C_\Real) = \He(\st(C))$.
\item if $\dim(C) < e$, then $\HeK(C) = 0$; the converse is not true.
\item $\HeK(C) < + \infty$.
\item If $\Pa{C(r)}_{r \in \K^d}$ is a definable family of bounded subsets 
of~$\K^n$, then there exists a natural number $M$ such that $\HnK(C(r)) < M$
for every $r \in \K^d$.
\item If $\K'$ is either an elementary extension or an o-minimal expansion
of~$\K$, then $\He(C_{\K'}) = \HeK(C)$.
\item If $n = e$, then $\He(C) = \LnK(C)$.
\item If $C$ is a subset of an $e$-dimensional affine space~$E$, 
then $\HeK(C) = \LEK(C)$.
%\item If $n' \geq n$, then $\Haus^{e,n'}_{\K}(C) = \HenK(C)$.
\end{enumerate}
\end{thm}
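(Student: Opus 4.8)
The plan is to prove each of the nine items, most of which follow by reducing to the corresponding statement over $\Real$ via the basic fact, established in Lemma~\ref{lem:Hausdorff-standard}, that $\HeK(C) = \He_\Real(\st(C))$ for \berect sets, together with the additivity of $\HeK$ and the existence of \berect partitions from Corollary~\ref{cor:partition}. The overarching strategy is: reduce to the \berect case by additivity, pass to $\st(C)$, invoke the known real-variable property, and translate back. I would handle the items roughly in the order they are stated, since several of the later ones depend on the earlier ones.

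For the individual items I would proceed as follows. For~(1), an isometry $\sigma$ of $\K^n$ is composed of a translation and an orthogonal map; translations commute with $\st$ up to a standard translation, and orthogonal maps have determinant $\pm 1$, so the change-of-variables computation in Lemma~\ref{lem:Hausdorff-well-defined} (where the Jacobian factor $\abs{\det \Diff H_i}$ appeared) shows $\Je$ is preserved; hence $\HeK(\sigma(C)) = \HeK(C)$. For~(2), scaling by $r$ multiplies each $e\times e$ minor of the derivative by $r^e$, so $\Je(rF) = \st(r)^e \Je F$ after applying $\st$, giving the homogeneity; one must note that $\st(rC) = \st(r)\st(C)$ and invoke Lemma~\ref{lem:Hausdorff-standard}. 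Item~(3) is the statement that for $\emptyset$-semialgebraic $C$ the measure agrees with the real Hausdorff measure of $C_\Real$ and of $\st(C)$; since $\st(C) = \cl{C_\Real}$ (by the remark preceding the measure-on-semialgebraic-sets subsection) and the boundary has lower dimension, both equal $\He(\st(C))$ via Lemma~\ref{lem:Hausdorff-standard}. For~(4), if $\dim(C) < e$ then a \berect partition has every piece of dimension $< e$, forcing $\HeK(C) = 0$; the converse fails by an example such as a single point with $e = 0$ handled separately, or a lower-density set, so I would just exhibit a definable $e$-dimensional set of vanishing $e$-measure. Item~(5), finiteness, follows because $\st(C)$ is a bounded subset of $\Real^n$ of dimension at most~$e$, and bounded definable sets in the o-minimal structure $\Real_K$ have finite $\He_\Real$-measure.

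For the remaining items: (6) requires uniformity in a definable family, which I expect to be the main obstacle, since one must produce a single bound~$M$ working for all parameters $r \in \K^d$ simultaneously; the natural approach is to apply Corollary~\ref{cor:partition} in a family-uniform way (the bounds there depend only on~$n$, not on the set), bound the number of pieces uniformly by o-minimality of the family, and bound each piece's measure uniformly using that the defining $M$-functions have uniformly bounded derivative, then invoke $\aleph_1$-saturation to get a finite---hence natural-number---bound. Items~(7) and~(8) are comparisons: for~(7), passing to an elementary extension or o-minimal expansion does not change the \berect partition nor the integral defining $\HeK$, since these are expressed by the same formulas and the Berarducci--Otero measure is preserved; for~(8), when $n = e$ the Jacobian $\Je F = \abs{\det \Diff F} = 1$ for the identity-type parametrization, so $\HeK$ reduces to $\LnK$ by Definition~\ref{def:area-basic} and Remark~\ref{rem:BO}. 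Finally~(9) generalizes~(8) to an arbitrary $e$-dimensional affine space~$E$: after an isometry (using item~(1)) carrying $E$ to the coordinate subspace $\K^e \times \set 0$, the claim reduces to~(8) and the definition of $\LEK$ given in \S\ref{sec:algebraic-measure}. Throughout, the one genuinely delicate point is the uniformity in~(6); everything else is a routine transcription of the real-variable facts through the standard-part map, relying on Lemma~\ref{lem:Hausdorff-standard} as the bridge.
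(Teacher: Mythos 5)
Your overall strategy---reduce to \berect pieces by Corollary~\ref{cor:partition}, pass through $\st$ via Lemma~\ref{lem:Hausdorff-standard}, and quote the real-variable fact---matches the paper for items (2), (3), (4), (7), (8), (9), where the paper's (very terse) proof does exactly this. The divergence is in items (1), (5) and (6): the paper disposes of all three by invoking the Cauchy-Crofton formula (Theorem~\ref{thm:Cauchy-Crofton}), whereas you argue directly. For (1) via Jacobians and for (5) via finiteness of $\He_\Real(\st(A_i))$ your route is workable and arguably more self-contained, but note a subtlety in (1): after applying a general isometry $\sigma$, the set $\sigma(A_i)$ is no longer presented as a graph over a \emph{coordinate} subspace, so you cannot literally quote Definition~\ref{def:area-basic}; you must first extend Lemma~\ref{lem:Hausdorff-well-defined} to show that $\int_U \Je G$ is independent of the choice of injective finite-derivative parametrization $G$ (not just of the \berect partition), or else re-partition $\sigma(C)$ and redo the change-of-variables computation. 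The Cauchy-Crofton route gets (1) for free, since the Haar measure on $\Agr_{n-e}(\K^n)$ and the counting function $f_A$ are manifestly isometry-invariant.

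The one place where your plan has a genuine hole is item (6), which you yourself flag as the delicate point. Your argument needs a \emph{family-uniform} version of Corollary~\ref{cor:partition}: a bound, uniform in $r$, on the \emph{number} of \berect pieces of $C(r)$, in addition to the uniform bound $M_n$ on the derivatives (which the paper does provide). Theorem~\ref{decomposition} and Corollary~\ref{cor:partition} are stated and proved for a single set, and the paper nowhere establishes that the number of pieces is bounded across a definable family; making the inductive cell-decomposition construction uniform is plausible but is real additional work, not a routine citation. Moreover, you cannot shortcut this by saturation alone, because ``$\HeK(C(r)) < M$'' is a statement about a real number attached to $r$, not a first-order condition on $r$. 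The paper's route avoids the issue entirely: by o-minimal uniform finiteness, $f_{C(r)}(E) = \#(C(r)\cap E)$ is bounded by a single $N \in \Nat$ for all $r$ and all $E$, the set of $E$ meeting a fixed bounded box has finite Haar measure, and Theorem~\ref{thm:Cauchy-Crofton} then gives $\HeK(C(r)) \leq N \cdot \Leb^{\Agr_{n-e}(\K^n)}(\set{E : E \cap \D^n \neq \emptyset})/\beta$ uniformly. If you want to keep your decomposition-based approach for (6), you must prove the uniform partition statement; otherwise you should fall back on Cauchy-Crofton as the paper does.
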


\begin{proof}\
\begin{enumerate}
\item[(1)] Use the Cauchy-Crofton formula. 
\item[(2), (4) and (7)] Apply the definition of $\Haus^e$ and Lemma~\ref{lem:Hausdorff-well-defined}.
\item[(3)] Apply Corollary~\ref{cor:partition} to $C_\Real$ and use
Lemma~\ref{lem:Hausdorff-standard}.
%\item[(4)] Apply the definition of $\Haus^e$.
\item[(5) and (6)] Apply the Cauchy-Crofton formula: see~\cite{dries03}.
%\item[(7)] Apply the definition of $\HeK$ and Lemma~\ref{lem:Hausdorff-well-defined}.
\item[(8)] Apply Lemma~\ref{lem:Hausdorff-standard}.
\item[(9)] Since $\HeK$ is invariant under isometries,
\wloG $E$ is the coordinate space~$\K^e$.
By Lemma~\ref{lem:Hausdorff-n-invariant}, the measure $\HeK$ inside $\K^n$
is equal to the measure $\HeK$ inside $\K^e$, and the latter is equal 
to~$\Leb^e$.
The conclusion follows from Remark~\ref{rem:BO}.
\qedhere
\end{enumerate}
\end{proof}

The following theorem is the adaption to o-minimal structures of the Co-area
formula, a well-known generalization of Fubini's theorem.
Let $\D := [0,1] \subset \K$.

\begin{thm}[Co-area Formula]
Let $A \subset D^m$ be definable,
and $f: \D^m \to \D^n$ be a definable Lipschitz function, with $m \geq n$.
Then, $\Jac_n f$ is $\Leb^m_{\K}$-integrable, and
\[
\int_A \Jac_n f \de \Leb^m =
\int_{\D^n} \Haus^{m - n}(A \cap f^{-1}(y)) \de \LnK(y).
\]
\end{thm}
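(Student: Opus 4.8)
The plan is to reduce the o-minimal Co-area formula to the classical Co-area formula over $\Real$ by pushing everything through the standard part map, exactly as was done for the Cauchy-Crofton formula. First I would apply Corollary~\ref{cor:partition} to partition $A$ into a set of dimension less than $m$ (which contributes nothing to either side, since the left integrand $\Jac_m$-type terms vanish on lower-dimensional pieces and $\Haus^{m-n}$ of the corresponding fibers integrates to zero) together with finitely many \berect pieces, so that \wloG $A$ is itself \berect, say $A = \Gamma(g)$ for a definable $M$-function $g$ on an open $M$-cell. By additivity of both sides in $A$ it suffices to treat a single such piece. The integrability of $\Jac_n f$ follows immediately because $f$ is Lipschitz with finite constant, so $\abs{\Diff f}$ is finite on $A$ and hence $\Jac_n f$ is a bounded definable function, whence $\st(\Jac_n f)$ is $\Leb^m_\Real$-integrable.

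Next I would set up the real counterparts via Lemma~\ref{lem:int-Stan}: let $\cl f : C \to \Real^n$ be the associated $\Real_\K$-definable $\Cone$ function, with $C \subseteq \st(\D^m)$ and exceptional set $E$ negligible. The heart of the argument is to show the two identifications
\[
\int_A \Jac_n f \de \Leb^m = \int_{\st(A)} \Jac_n(\cl f)\de \Leb^m_\Real
\]
and, for almost every $y \in \D^n$,
\[
\Haus^{m-n}\Pa{A \cap f^{-1}(y)} = \Haus^{m-n}_\Real\Pa{\st(A) \cap \cl f^{-1}(\st y)}.
\]
The first is a direct consequence of part~iii) of Lemma~\ref{lem:int-Stan}, which gives $\Diff(\cl f)(\st x) = \st(\Diff f(x))$ off a negligible set, so that the $n \times n$ minors of $\Diff f$ have standard parts equal to the corresponding minors of $\Diff\cl f$, and therefore $\st(\Jac_n f) = \Jac_n(\cl f)\comp \st$ almost everywhere. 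The second identification is the fiberwise analogue of Lemma~\ref{lem:card}: I would argue that for almost every $y$ the fiber $A \cap f^{-1}(y)$ is itself a \berect set whose standard part is exactly $\st(A) \cap \cl f^{-1}(\st y)$, invoking the Morse-Sard Lemma~\ref{lem:Sard} to discard the $S$-singular values of $\cl f$ and the Implicit Function Lemma~\ref{lem:implicit} (applied to the last $n$ coordinates of a suitably chosen chart) to match fibers over $\st^{-1}(\st y)$ with fibers over $\st y$. Then I would apply Lemma~\ref{lem:Hausdorff-standard} to replace $\Haus^{m-n}$ of the o-minimal fiber by $\Haus^{m-n}_\Real$ of its standard part.

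With both identifications in hand, the o-minimal statement becomes the assertion
\[
\int_{\st(A)} \Jac_n(\cl f)\de \Leb^m_\Real =
\int_{\st(\D^n)} \Haus^{m-n}_\Real\Pa{\st(A) \cap \cl f^{-1}(z)}\de \Leb^n_\Real(z),
\]
which is precisely the classical Co-area formula \cite{morgan} applied to the $\Cone$ function $\cl f$ on the real set $\st(A)$. Converting the right-hand side back from an integral over $z = \st y \in \st(\D^n)$ to an integral over $y \in \D^n$ via Remark~\ref{rem:BO} and the definition of $\LnK$ completes the reduction.

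I expect the main obstacle to be the fiberwise Hausdorff-measure identification, specifically the measurability and almost-everywhere control of the map $y \mapsto \Haus^{m-n}(A \cap f^{-1}(y))$ and the verification that the generic fiber is genuinely \berect (so that Lemma~\ref{lem:Hausdorff-standard} applies to it rather than just being a definable set of the right dimension, for which the naive formula would fail as in Example~\ref{ex:double}). Handling this requires a definable-family version of the earlier pointwise results: one must choose the \berect partition of the fibers uniformly in $y$, which is where the uniform bounds in Corollary~\ref{cor:partition} and the definable-family curves of Lemma~\ref{bdd.derivative} are essential. A secondary technical point is ensuring that the negligible exceptional sets $E$, the $S$-singular values, and the set where $\Diff(\cl f) \neq \st(\Diff f)$ all project to $\Leb^n_\Real$-negligible subsets of the target, so that the fiberwise equalities hold for almost every $y$; this is the fiber-integrated analogue of Remark~\ref{rem:lip-zero} and should follow by the same covering argument combined with the Lipschitz hypothesis on $f$.
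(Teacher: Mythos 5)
Your proposal is correct and follows essentially the same route as the paper's own (sketched) proof: reduce to the full-dimensional part of $A$, pass to $\cl f$ and $B:=\st(A)$ via Lemma~\ref{lem:int-Stan}, apply the real co-area formula, and recover the fiberwise identity $\Haus^{m-n}(A\cap f^{-1}(y))=\Haus^{m-n}_\Real(B\cap\cl f^{-1}(\st y))$ for almost every $y$ using Lemma~\ref{lem:Sard}, the Implicit Function Lemma~\ref{lem:implicit}, and Lemma~\ref{lem:Hausdorff-standard}. The extra care you take over the fiberwise \berect structure and measurability in $y$ goes beyond the paper's sketch but does not change the argument.
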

\begin{proof}[Sketch of Proof]
\Wlog, $A$ is an open subset of~$\D^m$.
By Lemma~\ref{lem:Sard}, \wloG all points of $A$ are $S$-regular for~$\cl f$.
Apply the real co-area formula~\cite{morgan} to $g := \cl f$ and $B :=
\st(A)$, and obtain
\[
\int_A \Jac_n f \de \Leb^m = \int_B \Jac_n g \de \Leb^m_\Real =
\int_{\D_\Real^n} \Haus^{m - n}_\Real (B \cap g^{-1}(z)) \de \Leb^n_\Real(z).
\]
By the Implicit Function Theorem and Lemma~\ref{lem:Hausdorff-standard},
for almost every $y \in \D_{\Real}^n$, we have
\[
\Haus^{m - n}(A \cap f^{-1}(y)) = \Haus^{m - n}_{\Real}(B \cap g^{-1}(\st y)).
\qedhere
\]
\end{proof}

% \begin{lem}[Area Formula, particular case.]
% Let $f: \D ^n \to \D^n$ be a definable Lipschitz function, and $g: \D^n \to
% \D$ be definable.
% Then, $\det(\Diff f)$ is $\LnK$-integrable, and
% \[\begin{aligned}
% \int_{\D^n} \abs{\det \Diff f} \de\LnK 
% &= \int_{\D^n} \#(f^{-1}(y)) \de \LnK(y),\\
% \int_{\D^n} g(x)\, \abs{\det \Diff f(x)} \de\LnK(x) 
% &= \int_{\D^n} \sum_{x \in   f^{-1}(y)} g(x) \de \LnK(y).
% \end{aligned}\]
% \end{lem}

% Fubini's theorem and the area formula can be moved to \S\ref{sec:Lebesgue}.


\begin{thebibliography}{Morgan88}

\bibitem[BO04]{bo} A.~Berarducci, M.~Otero. 
\newblock {\em An additive measure in o-minimal expansions of fields.}
\newblock The Quarterly Journal of Mathematics 55 (2004), no.~4, 411--419. 


\bibitem[BP98]{bp} Y.~Baisalov, B.~Poizat.
\newblock {\em Paires de structures o-minimales.}
\newblock J.~Symbolic Logic  63  (1998),  no.~2, 570--578.

\bibitem[Dries03]{dries03}
L.~van~den Dries.
\newblock {\em Limit sets in o-minimal structures.}
\newblock In M.~Edmundo, D.~Richardson, and A.~Wilkie, editors, {\em O-minimal
  Structures, Proceedings of the RAAG Summer School Lisbon 2003}, Lecture Notes
  in Real Algebraic and Analytic Geometry. Cuvillier Verlag, 2005.


\bibitem[Halmos50]{halmos}
P.~R. Halmos.
\newblock {Measure Theory}.
\newblock D.~Van Nostrand Company, Inc., New York, N. Y., 1950.

\bibitem[HPP08]{HPP} E.~Hrushovski,  Y.~Peterzil, A.~Pillay. 
\newblock {\em Groups, measures, and the NIP}.
\newblock J. Amer. Math. Soc.  21  (2008),  no. 2, 563--596.


\bibitem[K92]{kurdyka}
K.~Kurdyka. 
\newblock {On a subanalytic stratification satisfying a Whitney property with exponent 1.}
\newblock Real algebraic geometry proceedings (Rennes, 1991), 316-322, Lecture Notes in Math., 1524, Springer, Berlin, 1992.

\bibitem[Morgan88]{morgan}
F.~Morgan.
\newblock {Geometric Measure Theory}.
\newblock {Academic Press, 1988}
\newblock{An introduction to Federer's book by the same title.}

\bibitem[P08]{pawlucki} W.~Paw{\l}ucki. 
\newblock{\em Lipschitz Cell Decomposition in O-Minimal Structures.~I.}
\newblock{Illinois J. Math. 52 (2008), no. 3, 1045--1063.}

\bibitem[PW06]{PW} J.~Pila, A.~J.~Wilkie.
\newblock{\em  The rational points of a definable set}.
\newblock Duke Math. J.  133  (2006),  no. 3, 591--616.


\bibitem[VR06]{evr}
E.~Vasquez Rifo.
\newblock {Geometric partitions of definable sets}.
\newblock {Ph.D. thesis, University of Wisconsin-Madison}
\newblock{Madison, WI 53704, August 2006.}

\bibitem[W00]{warner}
F.~Warner.
\newblock {Foundations of differentiable manifolds and Lie groups}.
\newblock {Springer-Verlag, 2000}
\newblock{}

\end{thebibliography}
\end{document}